\numberwithin{equation}{section}
\newtheorem{thm}{Theorem}[section]
\newtheorem{lm}[thm]{Lemma}
\newtheorem{prop}[thm]{Proposition}
\theoremstyle{definition}
\newtheorem{dfn}[thm]{Definition}
\newtheorem{rmq}[thm]{Remark}
\newtheorem*{plan}{Plan of the paper}
\newtheorem*{ack}{Acknowledgments}
\newcommand{\Je}{\mathbf J^{\rm E}}
\newcommand{\Jm}{\mathbf J^{\rm M}}
\newcommand{\Hu}{H^1(\Omega\mathbin{;}{\rm curl})}
\newcommand{\Huz}{H^1_0(\Omega\mathbin{;}{\rm curl})}
\newcommand{\Ld}{{L^2}}
\newcommand{\virg}{\mathbin{,}}
\def\XXint#1#2#3{{\setbox0=\hbox{$#1{#2#3}{\int}$} \vcenter{\vspace{-1pt}\hbox{$#2#3$}}\kern-.5\wd0}}
\def\Xint#1{\mathchoice {\XXint\displaystyle\textstyle{#1}}{\XXint\textstyle\scriptstyle{#1}}{\XXint\scriptstyle\scriptscriptstyle{#1}}{\XXint\scriptscriptstyle\scriptscriptstyle{#1}}\!\int}
\def\intmed{\Xint{-}}
\def\step#1#2{\par\noindent{\underline{\it Step~#1.}}\emph{ #2}\\}
\newcommand{\vertiii}[1]{{\left\vert\kern-0.25ex\left\vert\kern-0.25ex\left\vert #1 
    \right\vert\kern-0.25ex\right\vert\kern-0.25ex\right\vert}}
\author[Francini]{Elisa Francini}
\author[Franzina]{Giovanni Franzina}
\author[Vessella]{Sergio Vessella}
\address[G. Franzina]{
Unit\`a di Ricerca  in{\rm\tiny d}am di Firenze \mbox{%
\raisebox{.8ex}{\rm c}%
\kern-.175em\raisebox{.2ex}{/}%
\kern-.18em\raisebox{-.2ex}{\rm o}%
} d{\rm\tiny i}m{\rm\tiny a}i ``Ulisse Dini'' 
\newline\indent 
Universit\`a degli Studi di Firenze
\newline\indent 
Viale Morgagni 67/A, 50134 Firenze, Italy}
\email{franzina@math.unifi.it}
\address[E. Francini, S. Vessella]{
d{\rm\tiny i}m{\rm\tiny a}i ``Ulisse Dini'' 
\newline\indent 
Universit\`a degli Studi di Firenze
\newline\indent 
Viale Morgagni 67/A, 50134 Firenze, Italy}
\email{francini@math.unifi.it,vessella@unifi.it}
\subjclass[2010]{
35M33, 
35M60, 
35R05, 
35Q61, 
35B65.
}
\keywords{eddy currents, non-smooth coefficients, initial-boundary value problem, global H\"older estimates}
\title{Existence and regularity for eddy current system with non-smooth conductivity}
\begin{document}
\maketitle

\begin{abstract}
We discuss the well-posedness of the ``transient eddy current'' magneto-quasistatic approximation of Maxwell's initial value problem
with bounded and measurable conductivity, with sources, on a domain. We prove existence and uniqueness of weak solutions, and we provide
global H\"older estimates for the magnetic part.
\end{abstract}

\section{Introduction}

Let $\Omega$ be a bounded $C^{1,1}$ domain in $\mathbb R^3$ (see Section~\ref{regdom} for definitions), and let $n$ denote
the outward unit normal to its boundary. We consider 
electromagnetic signals throughout a medium, filling the region $\Omega$, with {\em magnetic permeability} being given by
a Lipschitz continuous scalar function $\mu$
and 
{\em electric conductivity} being described by a bounded measurable function $\sigma$
taking  values in the real symmetric  $3\times3$ matrices.  We will assume
the validity of the conditions
\begin{subequations}\label{structure}
\renewcommand{\theequation}{\theparentequation \roman{equation}}
\begin{align}
\label{1.2i}
\Lambda^{-1} \le \mu \le \max\{ \mu\virg|\nabla\mu|\}\le \Lambda \,,& \qquad \text{a.e.\ in $\Omega$,}\\
\label{1.2ii}
\Lambda^{-1} |\eta|^2\le \sigma\eta\cdot\eta \le \Lambda |\eta|^2\,,&  \qquad \text{for all $\eta\in\mathbb R^3$, a.e.\ in $\Omega$,}
\end{align}
\end{subequations}
for an appropriate constant $\Lambda\ge1$.

Given $T>0$,
$\mathbf H_0\in \Ld(\Omega\mathbin{;}\mathbb R^3)$, 
 $\mathbf G\in L^2(0,T\mathbin{;} \Hu)$, with $\partial_t\mathbf G\in L^2(0,T\mathbin{;}L^2(\Omega\mathbin{;}\mathbb R^3))$,
and $\Je,\Jm\in L^2(0,T;L^2(\Omega\mathbin{;}\mathbb R^3))$, we consider weak
solutions $(\mathbf E\virg\mathbf H)\in L^2(0,T\mathbin{;}\Hu\times\Huz)$, with
$\partial_t\mathbf H\in L^2(0,T\mathbin{;}L^2(\Omega\mathbin{;}\mathbb R^3))$  (see Section~\ref{s:2} for definitions), of the initial value problem
\begin{equation}
\label{1.1}
\begin{cases}
 \nabla\times \mathbf H -\sigma \mathbf E = \Je \,, &\quad \text{in $\Omega\times(0,T)$,}\\
 \nabla \times\mathbf E + \mu \partial_t \mathbf H = \Jm\,,& \quad \text{in $\Omega\times(0,T)$,}\\
 \mathbf H\times n = \mathbf G\times n\,, & \quad \text{on $\partial\Omega\times(0,T)$,}\\
 \mathbf H=\mathbf H_0\,, & \quad \text{in $\Omega\times\{0\}$,}
\end{cases}
\end{equation}
under the assumption that
\begin{equation}
\label{in:comp}
	\nabla\cdot \left( \mu \mathbf G-\mu \mathbf H_0-\int_0^t\Jm\,ds\right)=0\,, \qquad \text{in $\Omega\times(0,T)$.}
\end{equation}
The meaning of \eqref{1.1} and of \eqref{in:comp} will be understood in a suitable weak sense in Section~\ref{s:2}.

Formally, the so-called eddy current system \eqref{1.1} is obtained
from Maxwell's equations when neglecting displacement currents and is equivalent to the parabolic system
\begin{equation}
\label{parabolic}
	\mu\partial_t\mathbf H + \nabla\times \Big( \sigma^{-1} \nabla\times \mathbf H\Big)=\nabla\times (\sigma^{-1}\Je)+\Jm\,,\quad \text{in }\Omega\times(0,T)\,,
\end{equation}
with the conditions $\mathbf H\times n = \mathbf G\times n$ on $\partial\Omega \times(0,T)$ and $\mathbf H = \mathbf H_0$ in $\Omega\times\{0\}$,
provided that
\[
	\mathbf E = \sigma^{-1} \Big( \nabla\times\mathbf H-\Je\Big)\,,\quad\text{in }\Omega\times(0,T)\,.
\]
To make an example, if $\sigma$ is constant and $\Je=\Jm=0$, then \eqref{parabolic} reads as
\[
\mu\sigma \partial_t\mathbf H + \nabla\times\nabla\times \mathbf H = 0\,,\quad\text{in }\Omega\times(0,T)\,,
\]
and $\nabla\times\nabla\times \mathbf H  = \nabla(\nabla\cdot \mathbf H) - \Delta \mathbf H$, where the Laplace operator is understood componentwise. 
Hence, in this case the problem is equivalent to the heat equation for the Hodge-Laplacian on vector fields, and
the components of divergence-free solutions solve the classical heat equation (up to a weight).

Our interest in this
 {\em parabolic} magneto-quasistatic
approximation of 
the laws of classical electromagnetism with possibly {\em discontinuous} electric conductivity tensor comes from {\em diffusive} models in applied 
seismo-electromagnetic studies~\cite{palangio,yamazaki}. In geophysics, the importance of modelling
{\em slowly varying} electromagnetic fields throughout the {\em stratified} lithosphere is due to the possibility
that some of them may be generated by co-seismic subsurface electric currents, and hence have some r\^ole in the seismic
percursor signal recognition.
For a very general survey on eddy currents with discontinuous conductivity and related numerics, with applications
to advanced medical diagnostics, the interested reader is referred instead to the nice treatise~\cite{AV}, where
inverse problems are also considered. We refer to~\cite{ACV} for issues related to
the source identification from boundary EM measurement.

The main results of this manuscript concern some qualitative properties of weak solutions of \eqref{1.1}, i.e., their existence and uniqueness, as well as the H\"older continuity of their magnetic part. For expositional purposes, we limit ourselves to the case of homogeneous boundary conditions, which causes no restriction (see Section~\ref{ss:ws}).

In Theorem~\ref{teo1} (see Section~\ref{s:ex}), we prove the well-posedness of \eqref{1.1}; for, we make use
of Galerkin's method and of the Hilbert basis that we manufacture in Section~\ref{ss:2.1} by solving an auxiliary problem of spectral type. This 
special system of vector fields has the expedient feature of being independent of the conductivity stratification, at variance with 
the natural basis for the associated parabolic problem. Existence and uniqueness results are available in the literature for problems
similar to \eqref{1.1}; for example,
in the time-harmonic regime the issue of well-posedness was addressed in \cite{H}, and 
in~\cite{ABN} (where it is also proved to be a good approximation of the complete set of Maxwell's equations), and
the time-harmonic variant of \eqref{1.1} is also dealt with in the  more recent paper~\cite{ACCGV}, providing
existence and uniqueness results and asymptotic expansions in terms of the size of the conductor in this context,
whereas in~\cite{AH} the well-posedness
of the variant of this problem focused on the electric field is discussed using a different approach, in the time domain, with applications to the asymptotic behaviour of solutions in the non-conductive limit.

In Theorem~\ref{teoreg2} (see Section~\ref{s:reg}), inspired by the work~\cite{A} on Maxwell's system, we prove H\"older continuity estimates for the magnetic field, valid up to the boundary. In the literature, we could not find either global or local estimate of this kind; we refer to the paper~\cite{Co} for some related result.

\begin{plan}
In Section~\ref{s:2} we make precise assumptions
on the domain and on the structure of the problem, we introduce the reader to some useful functional-analytic tools, 
we state some Helmoltz-type decompositions (proved in Appendix), and we define the weak solutions of the eddy current system \eqref{1.1}.
In Section~\ref{s:ex} we prove existence and uniqueness of weak solutions $(\mathbf E\virg\mathbf H)$, 
and in Section~\ref{s:reg} we provide global {\em a-priori} H\"older estimates on the magnetic field $\mathbf H$. 
\end{plan}

\begin{ack}
This research is supported by the {\scshape miur}-{\scshape foe-in}{\scriptsize d}{\scshape am} 2014 grant ``Strategic Initiatives for the Environment and Security - SIES''.
\end{ack}

\section{Technical Tools}\label{s:2}
We recall that the tangential trace, defined by $\phi\times n$ for all $\phi\in C^1(\overline\Omega;\mathbb R^3)$, extends to a bounded operator
from the Hilbert  space $\Hu$, consisting of all 
vector fields in $L^2(\Omega\mathbin{;}\mathbb R^3)$ whose (distributional) curl is also in $L^2(\Omega\mathbin{;}\mathbb R^3)$, endowed with the scalar product
\begin{equation}
\label{2.1}
	(\varphi\virg \psi)_{\Hu} = (\varphi\virg\psi)_{L^2(\Omega;\mathbb R^3)} + (\nabla\times \varphi\virg \nabla\times \psi)_{L^2(\Omega;\mathbb R^3)}\,,
\end{equation}
to the dual space $H^{-\frac{1}{2}}(\partial\Omega\mathbin{;}\mathbb R^3)$ of  $H^{\frac{1}{2}}(\partial\Omega\mathbin{;}\mathbb R^3)$ (see, e.g.,~\cite{DL}). 
Indeed, the Green-type formula
\begin{equation}
\label{2.2}
\int_\Omega \varphi\cdot\nabla\times\psi\,dx - \int_\Omega \psi\cdot\nabla\times\varphi\,dx = -\int_{\partial\Omega} \varphi\cdot(\psi\times n)\,dS
\end{equation}
holds for all $(\varphi,\psi){\in} (C^1(\overline\Omega\mathbin{;}\mathbb R^3))^2$. Moreover,
given $\psi\in \Hu$, by Sobolev extension and trace theorems, 
the left hand-side of \eqref{2.2} defines a bounded linear operator on $H^{\frac{1}{2}}(\partial\Omega\mathbin{;}\mathbb R^3)$
and
for every $\varphi\in \Hu$ formula \eqref{2.2} holds valid provided that the right hand-side is understood in a suitable weak sense, replacing the boundary integral with a
duality pairing.

 The closed subspace $\Huz$ of all $\psi\in \Hu$ for which, in the previous weak sense, we have $ \psi\times n=0$ on $\partial\Omega$ is
also a Hilbert space with respect to \eqref{2.1}.

Throughout the paper, the spaces of $L^2$ scalar-valued, vector-valued, and tensor-valued functions will be denoted by $L^2(\Omega)$,
$L^2(\Omega\mathbin{;}\mathbb R^3)$, $L^2(\Omega\mathbin{;}\mathbb R^{3\times3})$, respectively. For the sake of readability, we shall denote 
by $(\cdot\virg\cdot)_{L^2}$ and $\|\cdot\|_{L^2}$ the scalar product and the norm in all these spaces.

\subsection{Regularity of the domain}\label{regdom}
An open set $\Omega$ is said to satisfy the uniform two-sided ball condition with radius $r$ if for every
$z\in\partial\Omega$ there exist a ball $B_r(x)$ contained in $\Omega$ and a ball $B_r(y)$ contained in its complement with $z$ belonging to the closure of both $B_r(x)$
and of $B_r(y)$. 
If that is the case and we assume, in addition, that $\partial\Omega = \partial(\overline\Omega)$, then
$\Omega$ is a locally $C^{1,1}$-domain, i.e.,
for every $z\in \partial\Omega$ there exist two positive constants $\rho_0, L_0>0$,
and a rigid change of coordinates in $\mathbb R^3$, under which $z=0$ and
\[
	\Omega\cap B_{\rho_0}(0)  = \{y\in B_{\rho_0}(0) \colon y_3>\varphi(y_1,y_2)\}\,,
\]
for some $ C^{1,1}$ function $\varphi$  on $B_{\rho_0}'=\{(y_1,y_2)\in \mathbb R^2\colon y_1^2+y_2^2<\rho_0^2\}$, with $	\varphi(0) {=}|\nabla \varphi(0)|{=}0$, such that
\[
\|\varphi\|_{L^\infty(B_{\rho_0}')} + \rho_0\|\nabla\varphi\|_{L^\infty(B_{\rho_0}')} + \rho_0^2 \mathop{\rm Lip}(\nabla\varphi\mathbin{;}B_{\rho_0}')\le L_0 \rho_0\,,
\]
where 
\[
\mathop{\rm Lip}(\nabla\varphi\mathbin{;}B_{\rho_0}')  = \sup_{\substack{x,y\in B_{\rho_{\scriptsize0}}' \\ y\neq z}}\frac{|\nabla \varphi(x)-\nabla\varphi(y)|}{|x-y|}\,.
\]
If $\Omega$ is bounded and the property described above holds with constants $\rho_0,L_0$ independent of $z$, then we say that
$\Omega$ is  of class $C^{1,1}$ with constants $\rho_0,L_0$. In that case, it is easily seen that
$\Omega$ satisfies the uniform two-sided ball condition with radius $r$, provided that
$r<\min\{1\virg L_0^{-1}\}\rho_0$.
\vskip.2cm

Throughout this paper we shall always assume the following condition to be in force:
\begin{equation}
\label{hpOmega}
\text{$\Omega$ is bounded, with uniform two-sided ball condition with radius $r$, and
 $\partial\Omega = \partial(\overline\Omega)$.}
\end{equation}
We observe that \eqref{hpOmega}
implies that $\Omega$ is of class $C^{1,1}$ with appropriate constants $\rho_0,L_0$, satisfying $L_0 r<\rho_0$ (see~\cite[Corollary 3.14]{ABMMZ}), 
and we shall assume that $\rho_0=1$ with no loss of generality.

\subsection{Gaffney inequality}
The following result is proved in \cite{F} in the case of domains with smooth boundaries but its validity is also well known on open sets satisfying assumption (2.3) (see, e.g.,~\cite{Co0}).
\begin{lm}[Gaffney inequality]\label{lm:gaffney-aniso}
Let $ \psi\in L^2(\Omega\mathbin{;}\mathbb R^3)$, with
$\nabla\cdot\psi\in L^2(\Omega)$ and $\nabla\times\psi\in L^2(\Omega\mathbin{;}\mathbb R^3)$. If
either $\psi\times n=0$ in $H^{-\frac{1}{2}}(\partial\Omega\mathbin{;}\mathbb R^3)$ or $\psi\cdot n=0$ in $H^{-\frac{1}{2}}(\partial\Omega)$, then $\psi\in H^1(\Omega\mathbin{;}\mathbb R^3)$. 
Moreover,
\begin{equation}
\label{gaffney-aniso}
	\int_\Omega (\nabla\cdot\psi)^2\,dx+\int_\Omega|\nabla\times\psi|^2\,dx+\int_\Omega|\psi|^2\,dx\ge C \int_\Omega|\nabla\psi|^2\,dx\,,
\end{equation}
where the constant $C$ depends on $r$, only. 
\end{lm}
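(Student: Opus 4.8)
The plan is to reduce the statement to the classical Gaffney identity for smooth fields and to absorb the ensuing boundary contribution using the curvature bound that the two-sided ball condition provides. First I would assume $\psi\in C^\infty(\overline\Omega;\mathbb R^3)$ and expand the contraction of two Levi-Civita symbols to get the pointwise identity
\begin{equation*}
|\nabla\times\psi|^2+(\nabla\cdot\psi)^2-|\nabla\psi|^2=\partial_i\psi_i\,\partial_j\psi_j-\partial_i\psi_j\,\partial_j\psi_i=\partial_i v_i,\qquad v:=\psi\,(\nabla\cdot\psi)-(\psi\cdot\nabla)\psi,
\end{equation*}
summation over repeated indices being understood; the crucial point is that the quadratic first-order expression on the left is an exact divergence. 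Integrating over $\Omega$ and using the divergence theorem then yields the Gaffney identity
\begin{equation*}
\int_\Omega|\nabla\psi|^2\,dx=\int_\Omega|\nabla\times\psi|^2+(\nabla\cdot\psi)^2\,dx-\int_{\partial\Omega}\big[(\nabla\cdot\psi)(\psi\cdot n)-n\cdot(\psi\cdot\nabla)\psi\big]\,dS,
\end{equation*}
so that the whole difficulty is concentrated in the surface integral.

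Next I would extend the unit normal to a neighbourhood of $\partial\Omega$ as the gradient of the signed distance, which under \eqref{hpOmega} is Lipschitz with $\|\nabla n\|_\infty\le C/r$; here $\nabla n$ is the symmetric shape operator and encodes the second fundamental form, whose boundedness by $C/r$ is exactly the content of the two-sided ball condition. Rewriting the integrand as $(\nabla\cdot\psi)(\psi\cdot n)-(\psi\cdot\nabla)(\psi\cdot n)+(\nabla n)[\psi,\psi]$, I would now invoke the boundary condition. If $\psi\cdot n=0$ then $\psi$ is tangent to $\partial\Omega$, the first summand vanishes, and $\psi\cdot\nabla$ is a tangential derivative annihilating the function $\psi\cdot n\equiv0$; only the curvature term $\int_{\partial\Omega}(\nabla n)[\psi,\psi]\,dS$ survives. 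If instead $\psi\times n=0$, then $\psi$ is purely normal and an analogous reduction, using $(\nabla n)n=0$ and the Weingarten relation for the tangential derivatives of $n$, again leaves a single quadratic curvature term. In either case the surface integral is bounded in modulus by $(C/r)\int_{\partial\Omega}|\psi|^2\,dS$. To close the estimate I would apply a trace inequality confined to a boundary collar of width comparable to $r$, whose constant therefore depends only on $r$, in the form $\int_{\partial\Omega}|\psi|^2\,dS\le\delta\int_\Omega|\nabla\psi|^2\,dx+C_\delta\int_\Omega|\psi|^2\,dx$; choosing $\delta$ so that $(C/r)\delta=\tfrac12$ and absorbing the gradient term into the left-hand side of the Gaffney identity gives \eqref{gaffney-aniso} for smooth fields, with a constant depending on $r$ alone.

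The main obstacle is the passage from smooth fields to a general $\psi$ in the stated class, which is simultaneously where the $H^1$ regularity must be gained: on a merely $C^{1,1}$ domain the fields of $C^\infty(\overline\Omega;\mathbb R^3)$ obeying the boundary condition are not obviously dense in the graph norm of $L^2$ div and curl, so one cannot simply pass to the limit in the identity above. The honest route, which I would follow, is a tangential difference-quotient argument after flattening the boundary by a $C^{1,1}$ chart, whose derivatives, and hence all constants produced, are controlled by $r$: testing with $D_h^{-k}D_h^k\psi$ in the two tangential directions $k=1,2$ one bounds $\partial_k\psi$ in $L^2$ uniformly in $h$, the boundary condition being preserved by tangential differencing; the normal derivatives $\partial_3\psi_1,\partial_3\psi_2$ and $\partial_3\psi_3$ are then recovered algebraically from the two tangential components of $\nabla\times\psi$ and from $\nabla\cdot\psi$, respectively, which upgrades $\psi$ to $H^1$ and validates the identity a posteriori. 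Interior regularity is immediate, since there the boundary term is absent.
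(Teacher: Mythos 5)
The paper itself does not prove this lemma: it is imported from the literature (Friedrichs \cite{F} for smooth domains, Costabel \cite{Co0} for the class \eqref{hpOmega}), so your attempt must be judged on its own merits. Its first half is correct and is indeed the standard derivation of the \emph{a priori} bound: the pointwise identity $|\nabla\times\psi|^2+(\nabla\cdot\psi)^2-|\nabla\psi|^2=\nabla\cdot v$ is right; the rewriting of $v\cdot n$ and the case analysis are right (for $\psi\cdot n=0$ only the term $(\nabla n)[\psi,\psi]$ survives, for $\psi\times n=0$ only a mean-curvature term $(\nabla\cdot n)(\psi\cdot n)^2$); the bound $\|\nabla n\|_{L^\infty}\le C/r$ for the signed-distance extension is exactly what the two-sided ball condition provides; and the absorption through the scaled trace inequality closes the estimate, with constant depending only on $r$, for fields that are already known to be in $H^1(\Omega;\mathbb R^3)$ and satisfy one of the two boundary conditions.

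The genuine gap is in the step you yourself single out as the crux, and it is not repaired by what you propose. ``Testing with $D_h^{-k}D_h^k\psi$'' has no referent here: $\nabla\cdot$ and $\nabla\times$ form a \emph{first-order} system whose natural energy space is precisely the graph space in which $\psi$ is given, so there is no second-order equation against which such a test field can be paired. If you insert $D_h^{-k}D_h^k\psi$ into the weak formulations of $\nabla\cdot\psi=f$ and $\nabla\times\psi=G$, discrete integration by parts produces difference quotients of $f$ and $G$, which are of size $O(1/h)$ in $L^2$, not a bound on $\|D_h^k\psi\|_{L^2}$; if instead you apply your smooth-field inequality to $D_h^k\psi$, the argument is circular ($D_h^k\psi$ is no smoother than $\psi$) and in any case not uniform in $h$, since $\nabla\cdot D_h^k\psi$ and $\nabla\times D_h^k\psi$ are again only $O(1/h)$. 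Note that the flat half-space with the identity chart already contains the entire difficulty --- there are no commutators and no chart constants at all --- and your sketch supplies no mechanism there either. What is missing is the engine that converts first-order data into tangential $L^2$-derivative bounds, for instance: (a) a parity reflection across the flattened boundary (odd tangential/even normal components when $\psi\times n=0$, the opposite parities when $\psi\cdot n=0$), after which the reflected field has divergence and curl in $L^2$ with no surface distributions, and interior regularity of the div--curl system applies; or (b) a duality argument pairing $D_h^k(\zeta\psi)$ against arbitrary $w\in L^2$ decomposed as $w=\nabla\alpha+\nabla\times\beta$ with $\|\nabla\alpha\|_{L^2}+\|\beta\|_{H^1}\le C\|w\|_{L^2}$, so that the quotients can be shifted onto $\alpha$ and $\beta$ and the two weak equations used; or (c) the classical potential-theoretic route through $H^2$ elliptic regularity of the Laplacian on $C^{1,1}$ domains, which is what the cited proofs rely on. Until one of these is supplied, the actual content of the lemma --- that every field in the graph space with the stated boundary condition lies in $H^1$ --- is assumed rather than proved.
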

For every $\mu\in L^\infty(\Omega)$, we set
\begin{equation}
\label{XmuYmu}
	X_\mu= \left\{\psi\in L^2(\Omega\mathbin{;}\mathbb R^3)\colon \int_\Omega\mu \psi\cdot\nabla u\,dx = 0 \,,\ \text{for all $u\in H^1_0(\Omega)$}\right\}\,,
	\quad 
	Y_\mu = \Huz\cap X_\mu \,.
\end{equation}
If $\mu=1$ then, to shorten the notation, we write $X$, $Y$ instead of $X_\mu$, $Y_\mu$.

Clearly  if \eqref{1.2i} holds then $X_\mu$ is a Hilbert space with respect to the $\Ld(\mu)$-scalar product, i.e.
\begin{equation}
\label{weight1}
	\left( \varphi\virg \psi\right)_{X_\mu} := \int_\Omega \mu \varphi\cdot\psi\,dx\,, \quad \text{for all } \varphi\,, \psi\in X_\mu\,.
\end{equation}
The space $Y_\mu$ is closed in $\Huz$ with respect to the topology induced by \eqref{weight1} which in fact is the standard topology of
$\Ld(\Omega\mathbin{;}\mathbb R^3)$, as $\mu\in L^\infty(\Omega)$.
It is straightforward to deduce the following result from Lemma~\ref{lm:gaffney-aniso}.
\begin{lm}\label{mettidiv}
Let $\Omega$ satisfy the uniform interior and exterior ball condition with radius $r$ and let $\mu$ satisfy \eqref{1.2i}. Then, every $\psi\in Y_\mu$ belongs
to the Sobolev space $H^1(\Omega\mathbin{;}\mathbb R^3)$ and we have
\[ 
\int_\Omega |\nabla\psi|^2\,dx\le C\left( \int_\Omega |\psi|^2\,dx+\int_\Omega |\nabla\times\psi|^2\,dx\right)\,,
\]
for a suitable constant $C$, depending only on $\Lambda$ and $r$.
\end{lm}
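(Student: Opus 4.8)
The plan is to deduce the statement from the Gaffney inequality of Lemma~\ref{lm:gaffney-aniso}. For $\psi\in Y_\mu$ almost all the hypotheses of that lemma are granted for free: since $Y_\mu=\Huz\cap X_\mu$, the inclusion $\psi\in\Huz$ already yields $\psi\in L^2(\Omega;\mathbb R^3)$, $\nabla\times\psi\in L^2(\Omega;\mathbb R^3)$, and the boundary condition $\psi\times n=0$ on $\partial\Omega$. The only missing ingredient is the membership $\nabla\cdot\psi\in L^2(\Omega)$, because the constraint $\psi\in X_\mu$ only encodes the vanishing of $\nabla\cdot(\mu\psi)$ in the weak sense of \eqref{XmuYmu}. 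So the core of the argument is to upgrade this weighted, divergence-free condition into a genuine $L^2$ control on the divergence of $\psi$ itself, and this is exactly where the Lipschitz regularity of $\mu$ from \eqref{1.2i} is used.

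First I would extract the distributional divergence of $\psi$. Fix $v\in C_c^\infty(\Omega)$ and test the defining identity of $X_\mu$ against $u=v/\mu$. By \eqref{1.2i} the function $\mu$ is Lipschitz and bounded below by $\Lambda^{-1}$, hence $1/\mu\in W^{1,\infty}(\Omega)$ and $u=v/\mu$ is an admissible element of $H^1_0(\Omega)$. Inserting $\nabla u=\mu^{-1}\nabla v-\mu^{-2}v\,\nabla\mu$ into $\int_\Omega\mu\,\psi\cdot\nabla u\,dx=0$, the factors of $\mu$ cancel in the first term and I am left with
\[
\int_\Omega\psi\cdot\nabla v\,dx=\int_\Omega\frac{\psi\cdot\nabla\mu}{\mu}\,v\,dx\,,\qquad\text{for all }v\in C_c^\infty(\Omega)\,,
\]
which says precisely that $\nabla\cdot\psi=-\mu^{-1}\,\psi\cdot\nabla\mu$ in the sense of distributions.

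The required bound is then immediate: using $|\nabla\mu|\le\Lambda$ and $\mu\ge\Lambda^{-1}$ a.e.\ from \eqref{1.2i}, one gets the pointwise estimate $|\nabla\cdot\psi|\le\Lambda^2|\psi|$ a.e.\ in $\Omega$, so in particular $\nabla\cdot\psi\in L^2(\Omega)$ with $\int_\Omega(\nabla\cdot\psi)^2\,dx\le\Lambda^4\int_\Omega|\psi|^2\,dx$. Now every hypothesis of Lemma~\ref{lm:gaffney-aniso} is satisfied, so $\psi\in H^1(\Omega;\mathbb R^3)$ and the Gaffney inequality \eqref{gaffney-aniso} holds. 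Substituting the divergence bound into the right-hand side of \eqref{gaffney-aniso} replaces $\int_\Omega(\nabla\cdot\psi)^2\,dx$ by a multiple of $\int_\Omega|\psi|^2\,dx$, which gives the claimed estimate with a constant $C$ depending only on $\Lambda$ and on the Gaffney constant; since the latter depends only on $r$, the final $C$ depends only on $\Lambda$ and $r$, as asserted.

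The argument is essentially routine once the substitution $u=v/\mu$ is in place, so I do not expect a genuine obstacle. The only point that deserves care is the admissibility of $u=v/\mu$ as a test function and the Leibniz rule for $\nabla(v/\mu)$; both rest entirely on the fact that \eqref{1.2i} forces $\mu$ to be Lipschitz and uniformly positive, so that $1/\mu\in W^{1,\infty}(\Omega)$. No boundary regularity beyond what Lemma~\ref{lm:gaffney-aniso} already presupposes is needed at this stage.
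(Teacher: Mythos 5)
Your proof is correct and follows exactly the route the paper intends: the paper states Lemma~\ref{mettidiv} as a straightforward consequence of the Gaffney inequality (Lemma~\ref{lm:gaffney-aniso}) without spelling out the details, and your argument supplies precisely the missing step, namely using the Lipschitz regularity and uniform positivity of $\mu$ from \eqref{1.2i} to turn $\nabla\cdot(\mu\psi)=0$ into the pointwise bound $|\nabla\cdot\psi|\le\Lambda^2|\psi|$, after which \eqref{gaffney-aniso} yields the claim with a constant depending only on $\Lambda$ and $r$.
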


\begin{rmq}\label{rmq:norm}
By Lemma~\ref{mettidiv}, if \eqref{1.2i} holds then the norm
\[
	\|\psi\|_{Y_\mu}:=\left( \int_\Omega\mu\, |\psi|^2\,dx+\int_\Omega\mu\, |\nabla\times \psi|^2\,dx  \right)^\frac{1}{2}\,,
\]
is equivalent to that induced on $Y_\mu$ by $H^{1}(\Omega\mathbin{;}\mathbb R^3)$.
\end{rmq}

\begin{rmq}
\label{rmq:norm2}
By Remark~\ref{rmq:norm},  the compactness of the embedding of $H^1(\Omega\mathbin{;}\mathbb R^3)$ 
into $L^2(\Omega\mathbin{;}\mathbb R^3)$ implies that the embedding of $Y_\mu$ into $X_\mu$ is compact
if condition \eqref{1.2i} holds.
\end{rmq}

\subsection{Helmoltz decomposition}
\label{ss:hd}
We shall make use of the following  Helmoltz-type decompositions. The interested reader may find in the appendix their proofs, that are however
standard.
\begin{lm}\label{HelmE}
	Let $\mathbf F\in L^2(\Omega\mathbin{;}\mathbb R^3)$. Then there exist $u\in H^1(\Omega) $ and $\eta\in L^2(\Omega\mathbin{;}\mathbb R^3)$ such that
\begin{subequations}\label{HelmEeq}
\begin{align}
\label{HelmE1}
	& \mathbf F = \nabla u+\eta \,,& \\
\label{HelmE2}
	& \int_\Omega \eta\cdot\nabla v\,dx=0 \,, \quad \text{for all }v\in H^1(\Omega),&		\\
\label{HelmE2.5}
	& \max\Big\{ \|\nabla u\|_{\Ld} \virg \| \eta\|_{\Ld} \Big\} \le \|\mathbf F\|_{\Ld}\,.
\end{align}
\end{subequations}
If in addition $\mathbf F\in \Hu$, then $\eta\in H^1(\Omega\mathbin{;}\mathbb R^3)$ and $\|\nabla\eta\|_{\Ld}=\|\nabla\times\mathbf F\|_{\Ld}$.
\end{lm}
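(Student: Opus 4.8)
The plan is to realize \eqref{HelmEeq} as an orthogonal splitting of $\mathbf F$ in $L^2(\Omega\mathbin{;}\mathbb R^3)$. First I would consider the subspace $G=\{\nabla v\colon v\in H^1(\Omega)\}$ of gradients. By the Poincar\'e inequality the gradient map is bounded below on $H^1(\Omega)/\mathbb R$, so it has closed range and $G$ is therefore a \emph{closed} subspace of $L^2(\Omega\mathbin{;}\mathbb R^3)$ that coincides with the full set of gradients. Letting $\nabla u$ be the orthogonal projection of $\mathbf F$ onto $G$ (which is genuinely a gradient, precisely because $G$ is closed) and setting $\eta:=\mathbf F-\nabla u$, identity \eqref{HelmE1} holds by construction, while \eqref{HelmE2} is exactly the assertion $\eta\in G^{\perp}$. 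Since $\nabla u$ and $\eta$ are orthogonal in $L^2$, the Pythagorean identity $\|\mathbf F\|_{\Ld}^2=\|\nabla u\|_{\Ld}^2+\|\eta\|_{\Ld}^2$ delivers both inequalities in \eqref{HelmE2.5} at once.

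For the second assertion, suppose in addition $\mathbf F\in\Hu$. I would first extract the differential structure of $\eta$ from \eqref{HelmE2}: testing against $v\in H^1_0(\Omega)$ gives $\nabla\cdot\eta=0$ in the sense of distributions, hence $\nabla\cdot\eta=0\in L^2(\Omega)$; testing against a general $v\in H^1(\Omega)$ and integrating by parts then forces the \emph{normal} trace to vanish, i.e.\ $\eta\cdot n=0$ in $H^{-1/2}(\partial\Omega)$. Moreover, since the curl of a gradient is zero, $\nabla\times\eta=\nabla\times\mathbf F$ almost everywhere in $\Omega$; in particular $\nabla\times\eta\in L^2(\Omega\mathbin{;}\mathbb R^3)$ and the two curls have the same $L^2$ norm, $\|\nabla\times\eta\|_{\Ld}=\|\nabla\times\mathbf F\|_{\Ld}$.

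With these facts in hand, $\eta$ meets every hypothesis of Gaffney's inequality (Lemma~\ref{lm:gaffney-aniso}) in its version with vanishing normal trace: it belongs to $L^2$, its divergence and its curl lie in $L^2$, and $\eta\cdot n=0$ on $\partial\Omega$. I would therefore conclude $\eta\in H^1(\Omega\mathbin{;}\mathbb R^3)$, which is the regularity claim, while the preservation of the curl norm established above yields the accompanying identity.

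The one step that is not purely formal is the gain of $H^1$-regularity, which rests entirely on Gaffney's Lemma~\ref{lm:gaffney-aniso} and hence on the two-sided ball condition \eqref{hpOmega}; I expect this to be the only genuine obstacle, the remainder being soft Hilbert-space arguments. I would also take care to record that \eqref{HelmE2} imposes the \emph{normal} condition $\eta\cdot n=0$ (rather than the tangential one), so the correct branch of Lemma~\ref{lm:gaffney-aniso} must be invoked. Finally, the quantity that is actually conserved is the curl, through $\nabla\times\eta=\nabla\times\mathbf F$: comparing $\int_\Omega|\nabla\eta|^2$ directly with $\int_\Omega|\nabla\times\eta|^2$ would differ, on a curved boundary, by a surface term built from the second fundamental form of $\partial\Omega$, so it is precisely $\|\nabla\times\eta\|_{\Ld}$ that equals $\|\nabla\times\mathbf F\|_{\Ld}$.
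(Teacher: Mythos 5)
Your construction of the splitting is, modulo packaging, the paper's own: the orthogonal projection of $\mathbf F$ onto the closed subspace of gradients is exactly the Lax--Milgram solution of the variational problem the paper writes down, and your Pythagorean argument for \eqref{HelmE2.5} is the same computation the paper performs by testing with $v=u$. Your derivation of $\nabla\cdot\eta=0$, of the vanishing normal trace $\eta\cdot n=0$ in $H^{-\frac12}(\partial\Omega)$, and of $\nabla\times\eta=\nabla\times\mathbf F$ also matches the paper step by step. The genuine divergence is in the final step: the paper concludes ``by an integration by parts'' that $\eta\in H^1(\Omega\mathbin{;}\mathbb R^3)$ \emph{and} $\|\nabla\eta\|_{\Ld}=\|\nabla\times\eta\|_{\Ld}$, whereas you invoke Gaffney's inequality (Lemma~\ref{lm:gaffney-aniso}, normal-trace branch), which yields $\eta\in H^1(\Omega\mathbin{;}\mathbb R^3)$ but only the bound $\|\nabla\eta\|_{\Ld}^2\le C\big(\|\nabla\times\mathbf F\|_{\Ld}^2+\|\mathbf F\|_{\Ld}^2\big)$ with $C=C(r)$, together with the exact identity $\|\nabla\times\eta\|_{\Ld}=\|\nabla\times\mathbf F\|_{\Ld}$.

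Measured against the literal statement, your proof therefore leaves the claimed equality $\|\nabla\eta\|_{\Ld}=\|\nabla\times\mathbf F\|_{\Ld}$ unproven. But your closing caveat is mathematically correct, and it is the statement (and the paper's own argument) that is at fault here: the integration-by-parts identity behind the paper's step is $\int_\Omega|\nabla\eta|^2\,dx=\int_\Omega|\nabla\times\eta|^2\,dx+\int_\Omega(\nabla\cdot\eta)^2\,dx+(\text{boundary term})$, and for a field that is merely tangential to $\partial\Omega$ the boundary term is a second-fundamental-form expression in the tangential trace of $\eta$, which has no reason to vanish on a curved boundary. Concretely, on a ball (which satisfies \eqref{hpOmega}) take $\mathbf F=(-x_2,x_1,0)$: this field is divergence free and tangent to $\partial\Omega$, so $u$ is constant and $\eta=\mathbf F$, yet $\|\nabla\eta\|_{\Ld}^2=2|\Omega|$ while $\|\nabla\times\mathbf F\|_{\Ld}^2=4|\Omega|$. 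So the equality must be replaced either by your curl identity $\|\nabla\times\eta\|_{\Ld}=\|\nabla\times\mathbf F\|_{\Ld}$ or by the Gaffney bound; this weakening is harmless for the only place the identity is used, namely \eqref{HE2}--\eqref{REG1} in the proof of Theorem~\ref{teoreg2}, where constants depending on $r$ are already tolerated downstream. In short: your proof is sound, takes the more robust route at the regularity step, and correctly flags an error in the lemma as stated.
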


\begin{lm}\label{lm:hh}
Let $\mu$ satisfy \eqref{1.2i}. Given $\mathbf F\in L^2(\Omega\mathbin{;}\mathbb R^3)$, let $q\in H^1_0(\Omega)$ be the solution of the problem
\begin{equation}
\label{hhnew0}
\int_\Omega\mu\nabla q\cdot\nabla v\,dx = \int_\Omega\mu\,\mathbf F\cdot \nabla v\,dx\,,\quad \text{for all } v\in H^1_0(\Omega)\,.
\end{equation}
Then, writing
\begin{subequations}\label{hh0}
\begin{equation}
\label{hh2}
\mathbf F=\nabla q+\zeta\,,
\end{equation}
we have  $\zeta\in X_\mu$ and
\begin{equation}
\label{hh3}
	\|\nabla q\|_{\Ld}\le\Lambda \|\mathbf F\|_{\Ld}\,,\quad \|\zeta\|_{\Ld}\le\Lambda \|\mathbf F\|_{\Ld}\,.
\end{equation}
\end{subequations}
Moreover, if $\mathbf F\in H^1(\Omega\mathbin{;}\mathbb R^3)$, with $\mathbf F\times n=0$ in $H^{-\frac{1}{2}}(\partial\Omega\mathbin{;}\mathbb R^3)$,
then $q\in H^2(\Omega)\cap H^1_0(\Omega)$ and we may take  $\zeta\in Y_\mu$.
\end{lm}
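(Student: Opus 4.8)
The plan is to construct $q$ by the Lax--Milgram theorem, to read off the membership $\zeta\in X_\mu$ directly from the variational identity \eqref{hhnew0}, to obtain \eqref{hh3} from a weighted orthogonality, and finally to upgrade the regularity under the extra hypotheses by elliptic theory. Under \eqref{1.2i} the bilinear form $(q,v)\mapsto\int_\Omega\mu\nabla q\cdot\nabla v\,dx$ is continuous and, by the Poincar\'e inequality, coercive on $H^1_0(\Omega)$, while $v\mapsto\int_\Omega\mu\,\mathbf F\cdot\nabla v\,dx$ is a bounded linear functional there; hence \eqref{hhnew0} admits a unique solution $q\in H^1_0(\Omega)$. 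Setting $\zeta:=\mathbf F-\nabla q$, the identity \eqref{hhnew0} rewrites as $\int_\Omega\mu\,\zeta\cdot\nabla v\,dx=0$ for all $v\in H^1_0(\Omega)$, which is exactly the defining property of $X_\mu$ in \eqref{XmuYmu}; thus $\zeta\in X_\mu$.

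For the bounds \eqref{hh3} I would pass to the weighted norm $\|\psi\|_\mu:=(\int_\Omega\mu\,|\psi|^2\,dx)^{1/2}$, which by \eqref{1.2i} is equivalent to $\|\cdot\|_{\Ld}$ with constants $\Lambda^{\pm1/2}$. The splitting $\mathbf F=\nabla q+\zeta$ is orthogonal in the associated inner product \eqref{weight1}: indeed $(\nabla q,\zeta)_{X_\mu}=0$ is precisely the membership $\zeta\in X_\mu$ applied to $u=q$. Consequently $\|\mathbf F\|_\mu^2=\|\nabla q\|_\mu^2+\|\zeta\|_\mu^2$, so both $\|\nabla q\|_\mu\le\|\mathbf F\|_\mu$ and $\|\zeta\|_\mu\le\|\mathbf F\|_\mu$, and translating these back to $\|\cdot\|_{\Ld}$ through \eqref{1.2i} yields the factor $\Lambda$ in \eqref{hh3}.

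The delicate part is the regularity claim. Assume now $\mathbf F\in H^1(\Omega\mathbin{;}\mathbb R^3)$. The identity \eqref{hhnew0} means $\nabla\cdot(\mu\nabla q)=\nabla\cdot(\mu\mathbf F)$ in the sense of distributions; since $\mu$ is Lipschitz by \eqref{1.2i} and $\mathbf F\in H^1$, the product $\mu\mathbf F$ lies in $H^1$ and its divergence lies in $\Ld(\Omega)$. Expanding $\nabla\cdot(\mu\nabla q)=\mu\Delta q+\nabla\mu\cdot\nabla q$ distributionally and using $\nabla\mu\in L^\infty$, $\nabla q\in\Ld$, one gets $\Delta q\in\Ld(\Omega)$. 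The $C^{1,1}$ regularity of $\Omega$, guaranteed by \eqref{hpOmega}, then gives the global $H^2$-estimate for the homogeneous Dirichlet problem, so $q\in H^2(\Omega)\cap H^1_0(\Omega)$. It remains to check $\zeta\in Y_\mu$, that is $\zeta\in\Huz$. From $\nabla\times\zeta=\nabla\times\mathbf F\in\Ld(\Omega\mathbin{;}\mathbb R^3)$ we get $\zeta\in\Hu$; for the tangential trace I would write $\zeta\times n=\mathbf F\times n-\nabla q\times n$, where the first term vanishes by hypothesis, and, since $q$ vanishes on $\partial\Omega$ with $q\in H^2$, its boundary gradient is purely normal, $\nabla q=(\partial_n q)\,n$, whence $\nabla q\times n=0$. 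Therefore $\zeta\times n=0$ and $\zeta\in\Huz\cap X_\mu=Y_\mu$. The main obstacle I anticipate is exactly this last step: justifying the distributional product rule for $\nabla\cdot(\mu\nabla q)$ and invoking $H^2$-regularity with merely Lipschitz coefficients on a $C^{1,1}$ domain, together with the trace argument that $\nabla q$ is normal on $\partial\Omega$.
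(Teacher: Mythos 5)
Your proof is correct and follows essentially the same route as the paper's: the same variational construction of $q$, the membership $\zeta\in X_\mu$ read off directly from \eqref{hhnew0}, and the bounds \eqref{hh3} via the weighted Pythagoras identity (the paper expands $\int_\Omega\mu|\zeta|^2\,dx$ explicitly, which is exactly your orthogonality computation in disguise). The only genuine differences are in the final regularity step. For $q\in H^2(\Omega)$, the paper cites elliptic regularity for the divergence-form operator $\nabla\cdot(\mu\nabla\,\cdot\,)$ with Lipschitz coefficient directly, whereas you reduce to the Dirichlet Laplacian by the product rule in $H^{-1}(\Omega)$; both are legitimate, yours requiring the (easy) justification that multiplication by the Lipschitz functions $\mu$ and $1/\mu$ is well defined on $H^{-1}(\Omega)$. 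For the boundary condition, you invoke the pointwise fact that the trace of $\nabla q$ is purely normal when $q\in H^2(\Omega)\cap H^1_0(\Omega)$; the paper instead verifies $\zeta\times n=0$ weakly, by checking that $\int_\Omega\varphi\cdot\nabla\times\zeta\,dx-\int_\Omega\zeta\cdot\nabla\times\varphi\,dx=0$ for all $\varphi\in C^1(\overline\Omega\mathbin{;}\mathbb R^3)$, using \eqref{2.2}, the hypothesis $\mathbf F\times n=0$, and the identity $\int_\Omega\nabla q\cdot\nabla\times\varphi\,dx=0$ valid for $q\in H^1_0(\Omega)$. The paper's duality check is slightly more economical: the pointwise normality you use, while true and standard on $C^{1,1}$ domains, is itself most naturally proved by exactly that integration-by-parts computation, so your argument tacitly relies on a trace lemma that should be cited or proved.
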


\begin{rmq}\label{const:mu}Clearly Lemma~\ref{lm:hh} is valid also if $\mu$ is replaced by any other function
for which property \eqref{1.2i} holds true; for example, it applies to constants. More precisely,
we can decompose any $\Ld$ vector field in the form $\mathbf F = \nabla q+\zeta$, where
$q\in H^1_0(\Omega)$ is the weak solution of $\Delta q  =\nabla\cdot \mathbf F$. In this case,
$\zeta$ has null (distributional) divergence, and if $\mathbf F$ belongs to $H^1(\Omega\mathbin{;}\mathbb R^3)$ then so does $\zeta$.
\end{rmq}

\subsection{Weak formulation}\label{ss:ws}
We fix a Lipschitz continuous function $\mu $ satisfying \eqref{1.2i}, we define the spaces $X_\mu$, $X$, $Y_\mu$, and $Y$,
as in \eqref{XmuYmu}, and we denote by $Y_\mu'$ the dual space of $Y_\mu$.
For $p\in[1,+\infty]$ and for every Hilbert space $Z$
we denote by $L^p(0,T\mathbin{;}Z)$ the space of all measurable functions $\mathbf F\colon[0,T]\to Z$ such that
\[
	\|\mathbf F \|_{L^p(0,T\mathbin{;}Z)}:=
	\begin{cases}
	\displaystyle\left(\int_0^T\|\mathbf F(t)\|_{Z}^p\,dt\right)^\frac{1}{p} & \quad \text{if }p<+\infty\,,\\ 
	\displaystyle\mathop{\rm ess\,sup}_{t\in[0,T]} \|\mathbf F(t)\|_{Z} & \quad \text{if }p=+\infty\,,
	\end{cases}
\]
is finite. We recall that $L^p(0,T\mathbin{;}Z)$ is a Banach space (uniformly convex if $p<+\infty$).
We shall need the following generalisation of a well known property of Sobolev space-valued mappings.
For a proof, one can repeat verbatim  the argument used in the proof of the analogous result in Sobolev spaces, see~\cite[Theorem 3, \S 5.9.2]{E}.

\begin{prop}\label{prop:Teo3E}
	Suppose that $\mathbf F\in L^2(0,T\mathbin{;} Y_\mu)$, with $\partial_t\mathbf F\in L^2(0,T\mathbin{;}Y_\mu')$. Then,
	by possibly redefining it on a negligible subset of $(0,T)$, the function $\mathbf F$ belongs to
	$C([0,T]\mathbin{;}X_\mu)$.
	Moreover, the mapping $t\mapsto \|\mathbf F(t)\|_{X_\mu}^2$ is absolutely continuous and for a.e.\ $t\in(0,T)$ we have
	\[
		\frac12\frac{d}{dt} \|\mathbf F(t)\|_{X_\mu}^2 = \langle \partial_t\mathbf F(t)\virg \mathbf F(t)\rangle_{Y_\mu'\times Y_\mu}\,.
\]
Eventually, there exists a constant $C$, depending only on $T$, such that
\begin{equation*}
	\sup_{t\in[0,T]} \|\mathbf F(t)\|_{X_\mu} \le C \Big( 
	\|\mathbf F\|_{L^2(0,T\mathbin{;}Y_\mu)} + \|\partial_t\mathbf F\|_{L^2(0,T\mathbin{;}Y_\mu') }
	\Big)\,.
\end{equation*}
If $\mathbf F\in L^2(0,T\mathbin{;}\Hu)$ and $\sigma\partial_t\mathbf F\in L^2(0,T\mathbin{;}\Hu')$
then 
\(
	\mathbf F\in C([0,T]\mathbin{;}L^2(\Omega\mathbin{;}\mathbb R^3))
	\),
for a.e.\ $t\in(0,T)$ we have
\[
		\frac12\frac{d}{dt} \int_\Omega\sigma \mathbf F(t)\cdot\mathbf F(t)\,dx = \langle \sigma\partial_t\mathbf F(t)\virg \mathbf F(t)\rangle\,,
\]
where $\langle\cdot\virg\cdot\rangle$ denotes now the pairing between $\Hu$ and its dual space $\Hu'$, and
\[
	\sup_{t\in[0,T]} \|\mathbf F(t)\|_{L^2} \le C \Big( 
	\|\mathbf F\|_{L^2(0,T\mathbin{;}\Hu)} + \|\partial_t\mathbf F\|_{L^2(0,T\mathbin{;}\Hu') }
	\Big)\,,
\]
where the constant $C$ depends on $\Lambda$ and $T$, only.
\end{prop}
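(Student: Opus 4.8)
The plan is to reduce the statement to the classical abstract result for Gelfand triples, which is exactly \cite[Theorem 3, \S 5.9.2]{E}, and then to deduce the $\sigma$-weighted variant by a change of inner product. The central observation is that $(Y_\mu, X_\mu, Y_\mu')$ forms a Gelfand triple: by Remark~\ref{rmq:norm2} the embedding $Y_\mu\hookrightarrow X_\mu$ is compact, hence continuous, and since $X_\mu$ is a Hilbert space with the weighted product \eqref{weight1}, we may identify $X_\mu$ with its own dual and obtain the dense continuous chain $Y_\mu\hookrightarrow X_\mu\cong X_\mu'\hookrightarrow Y_\mu'$. Density of $Y_\mu$ in $X_\mu$ holds because $Y_\mu$ is the intersection of $\Huz$ with $X_\mu$ and the curl-free part can be approximated; in any event this is part of the ambient functional-analytic setup.

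For the first half of the proposition, I would invoke the abstract theorem verbatim. Given $\mathbf F\in L^2(0,T;Y_\mu)$ with $\partial_t\mathbf F\in L^2(0,T;Y_\mu')$, the Lions–Magenes / Evans result yields a representative in $C([0,T];X_\mu)$, the absolute continuity of $t\mapsto\|\mathbf F(t)\|_{X_\mu}^2$, and the differentiation formula
\[
\tfrac12\tfrac{d}{dt}\|\mathbf F(t)\|_{X_\mu}^2=\langle\partial_t\mathbf F(t)\virg\mathbf F(t)\rangle_{Y_\mu'\times Y_\mu}\,.
\]
The supremum bound then follows by integrating this identity from an arbitrary $s$ where $\|\mathbf F(s)\|_{X_\mu}$ is controlled (e.g.\ an average point furnished by the mean value theorem applied to the $L^2$-in-time norm) to $t$, and estimating the right-hand side by Cauchy–Schwarz in the $Y_\mu'\times Y_\mu$ pairing; the constant depends only on $T$ because the weighted $X_\mu$-norm is used throughout and $\mu$ is absorbed into the inner product.

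For the second half, the idea is to apply the same machinery to the triple built from $\Hu$ but with $X_\mu$ replaced by $L^2(\Omega;\mathbb R^3)$ equipped with the $\sigma$-weighted form $(\varphi,\psi)\mapsto\int_\Omega\sigma\varphi\cdot\psi\,dx$, which by \eqref{1.2ii} is an inner product equivalent to the standard $L^2$ product. Here the relevant Gelfand triple is $\Hu\hookrightarrow L^2(\Omega;\mathbb R^3)\hookrightarrow\Hu'$, the pivot space being $L^2$ identified with its dual through the $\sigma$-weighted form. The hypothesis $\sigma\partial_t\mathbf F\in L^2(0,T;\Hu')$ is precisely the statement that the time derivative, read against this weighted pairing, lies in the dual, so the abstract theorem again produces $\mathbf F\in C([0,T];L^2)$ together with
\[
\tfrac12\tfrac{d}{dt}\int_\Omega\sigma\mathbf F(t)\cdot\mathbf F(t)\,dx=\langle\sigma\partial_t\mathbf F(t)\virg\mathbf F(t)\rangle\,,
\]
and the supremum estimate follows by the same integration argument, with the constant now depending on $\Lambda$ (through the equivalence $\Lambda^{-1}\|\cdot\|_{L^2}^2\le\int_\Omega\sigma|\cdot|^2\,dx\le\Lambda\|\cdot\|_{L^2}^2$) and on $T$.

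The main obstacle is bookkeeping rather than conceptual: one must be careful that in the $\sigma$-weighted setting the map $\sigma\partial_t\mathbf F$ is interpreted as an element of $\Hu'$ via the correct duality, so that the differentiation formula genuinely reads with $\sigma$ inside the integral and not merely as the unweighted derivative. Concretely, the delicate point is to verify that identifying the pivot $L^2$ with its dual through $(\cdot,\cdot)_\sigma$ is compatible with the hypothesis as stated, i.e.\ that the symbol $\sigma\partial_t\mathbf F$ and the pairing $\langle\sigma\partial_t\mathbf F,\mathbf F\rangle$ refer to the same object; once this identification is pinned down, the equivalence of weighted and unweighted norms guaranteed by \eqref{1.2ii} transfers all conclusions of the abstract theorem, with the $\Lambda$-dependence of the final constant entering only through that equivalence.
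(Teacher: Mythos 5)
Your proposal is correct and matches the paper's approach: the paper gives no detailed proof but states that one can ``repeat verbatim'' the argument of \cite[Theorem 3, \S 5.9.2]{E}, which is exactly your reduction to the abstract Gelfand-triple result applied to $Y_\mu\hookrightarrow X_\mu\hookrightarrow Y_\mu'$ and to $\Hu\hookrightarrow L^2\hookrightarrow\Hu'$ with the $\sigma$-weighted pivot inner product. The only point worth pinning down is the density of $Y_\mu$ in $X_\mu$, which you gesture at; the paper supplies it later (Lemma~\ref{lm:density}, weak density, which for a linear subspace implies norm density), so your setup is sound.
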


\begin{dfn}\label{def-weak}
Given
\begin{equation}
\label{weak-source}
\Je\in L^2(0,T\mathbin{;}L^2(\Omega\mathbin{;}\mathbb R^3))\,, \ \Jm\in L^2(0,T\mathbin{;}X)\,,
\end{equation}
and
\begin{equation}
\label{weak-indata}
\mathbf H_0\in Y_\mu\,,
\end{equation}
we say that
$(\mathbf E\virg \mathbf H)\in L^2(0,T\mathbin{;} \Hu\times \Huz)$, with $\partial_t\mathbf H\in L^2(0,T;L^2(\Omega\mathbin{;}\mathbb R^3))$,
is a {\em weak solution of the eddy current system}
\begin{equation}
\label{1.10}
\begin{cases}
 \nabla\times \mathbf H -\sigma \mathbf E = \Je \,, &\quad \text{in $\Omega\times(0,T)$,}\\
 \nabla \times\mathbf E + \mu \partial_t \mathbf H = \Jm\,,& \quad \text{in $\Omega\times(0,T)$,}\\
 \mathbf H\times n = 0\,, & \quad \text{on $\partial\Omega\times(0,T)$,}\\
 \mathbf H=\mathbf H_0\,, & \quad \text{in $\Omega\times\{0\}$,}
\end{cases}
\end{equation} 
\begin{subequations}\label{weak}
 if for all $\varphi \in \Hu $ and for all $\psi\in \Huz$ we have
\renewcommand{\theequation}{\theparentequation \roman{equation}}
\begin{equation}\label{weak-ampere}
 \int_\Omega \mathbf H\cdot\nabla\times\varphi\,dx -\int_\Omega \sigma\mathbf E\cdot\varphi\,dx =
  \int_\Omega \Je\cdot \varphi\,dx
\end{equation}
\begin{equation}\label{weak-faraday}
\int_\Omega \mathbf E\cdot \nabla\times\psi\,dx + \int_\Omega \mu \partial_t\mathbf H\cdot \psi\,dx =
 \int_\Omega \Jm\cdot \psi\,dx
\end{equation}
for a.e. $t\in[0,T]$, and in addition we have
\begin{equation}
\label{weak0}
\mathbf H(0) = \mathbf H_0\,.
\end{equation}
\end{subequations}
\end{dfn}

\begin{rmq}\label{EvansC}
We note that \eqref{weak-source}, \eqref{weak-indata}, \eqref{weak-faraday}, and \eqref{weak0} imply that $\mathbf H\in L^2(0,T\mathbin{;}Y_\mu)$
and $\partial_t\mathbf H\in L^2(0,T\mathbin{;}X_\mu)$. Then,
$\partial_t\mathbf H\in L^2(0,T\mathbin{;}Y_\mu')$, due to the isometric embedding of $X_\mu$ into the dual $Y_\mu'$ of $Y_\mu$. Hence, in view of Proposition~\ref{prop:Teo3E},  we see that
$\mathbf H\in C([0,T]\mathbin{;}X_\mu)$ and thus equality \eqref{weak0} makes sense.
\end{rmq}

\begin{rmq}\label{rmq:smallertest}
Let equation \eqref{weak-faraday} hold for all $\psi\in Y_\mu$. Then, it holds for all $\psi\in \Huz $.
Indeed, by Lemma~\ref{lm:hh} we can write every $\psi\in C^1_0(\Omega\mathbin{;}\mathbb R^3)$
in the form $\psi = \nabla q+\zeta$ where $\zeta\in Y_\mu$ and 
\begin{equation}
\label{ampereconq}
\int_\Omega \mathbf E\cdot \nabla\times(\nabla q)\,dx =\int_\Omega \mu \partial_t\mathbf H\cdot \nabla q\,dx = \int_\Omega \Jm\cdot \nabla q\,dx=0\,,
\end{equation}
because $\nabla\times(\nabla q)=0$, and $\mu\partial_t\mathbf H,\Jm\in X$ for a.e.\ $t\in(0,T)$. Then, \eqref{weak-faraday} holds for
all test fields in $C^1_0(\Omega\mathbin{;}\mathbb R^3)$, which by~\cite[Remark 4.2]{DL} is dense in $\Huz$.
\end{rmq}

Formally, in view of the integration by parts formula \eqref{2.2}, 
a weak solution in the sense of Definition~\ref{def-weak} is
a solution to \eqref{1.1} with $\mathbf G=0$, satisfying the additional condition $\nabla\cdot(\mu\mathbf H)=0$.
Weak solutions in case of non-homogeneous boundary conditions are defined in the following sense.

\begin{dfn}\label{def-weak-G} Given $\Je,\Jm\in L^2(0,T\mathbin{;}L^2(\Omega\mathbin{;}\mathbb R^3))$, given $\mathbf H_0\in \Hu$, and
given
$\mathbf G\in L^2(0,T\mathbin{;} \Hu)$, 
with $\partial_t\mathbf G\in L^2(0,T\mathbin{;}L^2(\Omega\mathbin{;}R^3))$, 
such that for a.e.\ $t\in(0,T)$ we have
\begin{equation}
\label{compatibility}
\int_\Omega\left(\mu \mathbf G(x,t) - \mu\mathbf H_0(x)-\int_0^t\Jm(x,s)\,ds \right)\cdot\nabla u(x)\,dx=0\,,
\end{equation}
for all $u\in H^1_0(\Omega)$,
we say that
$(\mathbf E\virg \mathbf H)\in L^2(0,T\mathbin{;}\Hu^2)$, with $\partial_t\mathbf H\in L^2(0,T\mathbin{;}L^2(\Omega\mathbin{;}\mathbb R^3))$, is a  
{\em weak solution of the eddy current system}
\eqref{1.1}
if $\mathbf F:=\mathbf H-\mathbf G$ belongs to $ L^2(0,T\mathbin{;}\Huz)$ 
and $(\mathbf E\virg \mathbf F)$ solves, in the sense of Definition~\ref{def-weak}, the system
\begin{equation}
\label{eddyG}
	\begin{cases}
		\nabla\times\mathbf F -\sigma\mathbf E = \Je - \nabla\times \mathbf G & \qquad \text{in }\Omega\times(0,T)\,, \\
		\nabla\times\mathbf E+\mu\partial_t\mathbf F = \Jm- \mu\partial_t\mathbf G & \qquad \text{in }\Omega\times(0,T)\,,\\
		\mathbf F\times n=0 & \qquad \text{on } \partial\Omega\times(0,T)\,,\\
		\mathbf F=\mathbf H_0-\mathbf G & \qquad \text{in } \Omega\times\{0\}\,.
	\end{cases}
\end{equation}
\end{dfn}

We observe that Definition~\ref{def-weak-G} makes sense, because under
the assumptions made in Definition~\ref{def-weak-G} on $\Je$, $\Jm$, $\mathbf H_0$, and $\mathbf G$, it makes sense to consider weak solutions of \eqref{1.10}
in the sense of Definition~\ref{def-weak}, relative
to the sources
\[
	\widetilde{\Je}=\Je - \nabla\times \mathbf G\,, \qquad \widetilde{\Jm}=\Jm- \mu\partial_t\mathbf G\,,
\]
and to the initial datum
\[
\widetilde{\mathbf H}_0=	\mathbf H_0-\mathbf G(0)\,.
\]
Indeed, by \eqref{compatibility},
$\widetilde{\Je}$, $\widetilde{\Jm}$ satisfy
conditions \eqref{weak-source}.
Moreover,
since $\mathbf G\in L^2(0,T\mathbin{;}\Hu)$
and $\partial_t\mathbf G\in L^2(0,T;L^2(\Omega\mathbin{;}L^2(\Omega\mathbin{;}\mathbb R^3))$,
arguing as done in Remark~\ref{EvansC} we see that $\mathbf G$ belongs to
$C([0,T]\mathbin{;}L^2(\Omega\mathbin{;}\mathbb R^3))$, hence $\widetilde{\mathbf H}_0$ is  well-defined.
Eventually, again by \eqref{compatibility}, $\widetilde{\mathbf H}_0$ satisfies \eqref{weak-indata}.

\section{Existence and Uniqueness of Solutions}\label{s:ex}
The goal of the present section is to prove the following result.
\begin{thm}\label{teo1}
Let $\mathbf H_0\in Y_\mu$,
let $\Je\in L^2(0,T\mathbin{;}L^2(\Omega\mathbin{;}\mathbb R^3))$, with
$\partial_t\Je \in L^2(0,T\mathbin{;}L^2(\Omega\mathbin{;}\mathbb R^3))$, and let
$\Jm\in L^2(0,T\mathbin{;}X)$. Then, there exists a unique weak solution 
$(\mathbf E\virg\mathbf H)$ of 
\eqref{1.10}.
Moreover,
\begin{equation}
\label{teo1.1}
\begin{split}
	& \sup_{t\in[0,T]}   \|  \mathbf E(t)  \|^2_{\Ld}+ \sup_{t\in[0,T]}\|\mathbf H(t)\|_{\Ld}^2 + \int_0^T\|\partial_t\mathbf H(t)\|_{\Ld}^2\,dt\\
	& \qquad \le C \Big( \|\mathbf H_0\|_{\Hu}^2+\|\Je(0)\|_{\Ld}^2  +\int_0^T\big[ \|\Je(t)\|_{\Ld}^2+\|\Jm(t)\|_{\Ld}^2+\|\partial_t\Je(t)\|_{\Ld}^2\big]\,dt\Big)\,,
\end{split}
\end{equation}
where the constant $C$ depends on $\Lambda,T$, only.
\end{thm}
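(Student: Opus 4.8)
The plan is to eliminate the electric field, reduce \eqref{1.10} to a single parabolic equation for $\mathbf H$ on the triple $Y_\mu\hookrightarrow X_\mu\hookrightarrow Y_\mu'$, and solve it by a Galerkin scheme built on the conductivity-independent spectral basis of $Y_\mu$ furnished by the compact embedding of Remark~\ref{rmq:norm2}. First I would note that testing \eqref{weak-ampere} against every $\varphi\in\Hu$ and integrating by parts through \eqref{2.2} (the boundary term $\int_{\partial\Omega}\varphi\cdot(\mathbf H\times n)$ vanishing since $\mathbf H\times n=0$) forces the $\Ld$-identity $\sigma\mathbf E=\nabla\times\mathbf H-\Je$, so that $\mathbf E=\sigma^{-1}(\nabla\times\mathbf H-\Je)$ is slaved to $\mathbf H$. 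Substituting into \eqref{weak-faraday} and using Remark~\ref{rmq:smallertest} to restrict test fields to $Y_\mu$, the problem becomes: find $\mathbf H\in L^2(0,T;Y_\mu)$ with $\partial_t\mathbf H\in L^2(0,T;X_\mu)$ and $\mathbf H(0)=\mathbf H_0$ such that, for a.e.\ $t$ and all $\psi\in Y_\mu$,
\[
\int_\Omega\mu\,\partial_t\mathbf H\cdot\psi\,dx+\int_\Omega\sigma^{-1}\nabla\times\mathbf H\cdot\nabla\times\psi\,dx=\int_\Omega\Jm\cdot\psi\,dx+\int_\Omega\sigma^{-1}\Je\cdot\nabla\times\psi\,dx.
\]
The left-hand bilinear form is bounded and, by \eqref{1.2ii} together with Remark~\ref{rmq:norm}, coercive on $Y_\mu$ up to the lower-order $X_\mu$-term, the weight $\mu$ making the $X_\mu$-mass form the natural one. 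I would then pose $\mathbf H_N=\sum_{k\le N}c_k^N(t)e_k$ on the span of the first $N$ basis fields $e_k\in Y_\mu$, orthonormal in $X_\mu$ and orthogonal in $Y_\mu$; the mass matrix being the identity, the coefficients solve a linear Carath\'eodory ODE system with $L^2$ forcing, uniquely solvable on $[0,T]$, with $\mathbf H_N(0)$ the $X_\mu$-projection of $\mathbf H_0$.

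The heart of the argument is the pair of $N$-uniform a-priori estimates. Testing with $\psi=\mathbf H_N$ and using Young's inequality on the two sources (absorbing $\tfrac12\int\sigma^{-1}|\nabla\times\mathbf H_N|^2$ on the left) controls $\sup_t\|\mathbf H_N\|_{\Ld}^2$ and $\int_0^T\|\nabla\times\mathbf H_N\|_{\Ld}^2\,dt$ by $\|\mathbf H_0\|_{\Ld}^2+\int_0^T(\|\Je\|_{\Ld}^2+\|\Jm\|_{\Ld}^2)\,dt$. To control $\sup_t\|\nabla\times\mathbf H_N\|_{\Ld}^2$ and $\int_0^T\|\partial_t\mathbf H_N\|_{\Ld}^2\,dt$, which is exactly what bounds $\mathbf E_N=\sigma^{-1}(\nabla\times\mathbf H_N-\Je)$ in $L^\infty(0,T;\Ld)$, I would test with $\psi=\partial_t\mathbf H_N$ (admissible, since $\partial_t\mathbf H_N\in Y_\mu$). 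As $\sigma$ is time-independent, the stiffness term is $\tfrac12\tfrac{d}{dt}\int\sigma^{-1}|\nabla\times\mathbf H_N|^2$, while the troublesome $\int_0^t\int\sigma^{-1}\Je\cdot\nabla\times\partial_t\mathbf H_N$ must be integrated by parts in time, producing $\int\sigma^{-1}\Je(t)\cdot\nabla\times\mathbf H_N(t)$, the datum $-\int\sigma^{-1}\Je(0)\cdot\nabla\times\mathbf H_N(0)$, and $-\int_0^t\int\sigma^{-1}\partial_t\Je\cdot\nabla\times\mathbf H_N$. This is precisely where the hypothesis $\partial_t\Je\in L^2$ and the norms $\|\Je(0)\|_{\Ld}$, $\|\mathbf H_0\|_{\Hu}$ of \eqref{teo1.1} enter: the $t=0$ term is handled by $\|\nabla\times\mathbf H_N(0)\|_{\Ld}\le\|\nabla\times\mathbf H_0\|_{\Ld}$ (the $Y_\mu$-orthogonality of the basis), and $\sup_t\|\Je(t)\|_{\Ld}^2\le C(\|\Je(0)\|_{\Ld}^2+\int_0^T\|\partial_t\Je\|_{\Ld}^2\,dt)$. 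After absorbing fractions of $\int\sigma^{-1}|\nabla\times\mathbf H_N(t)|^2$ and $\int\mu|\partial_t\mathbf H_N|^2$, Gronwall closes the estimate with a constant depending only on $\Lambda$ and $T$.

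With the $N$-uniform bounds I would extract a subsequence with $\mathbf H_N\rightharpoonup\mathbf H$ in $L^2(0,T;Y_\mu)$, $\partial_t\mathbf H_N\rightharpoonup\partial_t\mathbf H$ in $L^2(0,T;X_\mu)$, and $\mathbf E_N\rightharpoonup\mathbf E$ weakly-$\ast$ in $L^\infty(0,T;\Ld)$; since both the reduced equation and \eqref{weak-ampere}--\eqref{weak-faraday} are linear, passing to the limit against a fixed basis field and then using density gives a weak solution, the initial condition being recovered from $\mathbf H\in C([0,T];X_\mu)$ (Proposition~\ref{prop:Teo3E}, Remark~\ref{EvansC}) together with $\mathbf H_N(0)\to\mathbf H_0$, while \eqref{teo1.1} survives by weak lower semicontinuity of the norms. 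Uniqueness is then immediate by linearity: the difference of two solutions has vanishing data, and testing its reduced equation with $\psi=\mathbf H$ via the absolute-continuity identity $\tfrac12\tfrac{d}{dt}\|\mathbf H\|_{X_\mu}^2=\langle\partial_t\mathbf H\virg\mathbf H\rangle$ of Proposition~\ref{prop:Teo3E} yields $\tfrac12\tfrac{d}{dt}\int_\Omega\mu|\mathbf H|^2\,dx+\int_\Omega\sigma^{-1}|\nabla\times\mathbf H|^2\,dx=0$, whence $\mathbf H\equiv0$ and $\mathbf E\equiv0$.

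The step I expect to be the main obstacle is the second a-priori estimate: making the time integration-by-parts rigorous at the Galerkin level and closing the Gronwall argument uniformly in $N$ while keeping the final constant dependent on $\Lambda,T$ alone. This is also where the conductivity-independence of the basis is essential: since $\sigma$ is merely $L^\infty$, the eigenfunctions of the genuine parabolic operator $\nabla\times(\sigma^{-1}\nabla\times\,\cdot\,)$ need not lie in $H^1$, whereas the curl-curl basis of $Y_\mu$ does by Lemma~\ref{mettidiv}, so that $\partial_t\mathbf H_N$ is an admissible test field and $\nabla\times\partial_t\mathbf H_N\in\Ld$ throughout.
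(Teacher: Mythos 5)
Your proposal is correct, and it organizes the energy method differently from the paper. The paper keeps the two-field, first-order structure: $\mathbf E_m$ is expanded on an auxiliary orthonormal basis $(\varphi_i)$ of $\Hu$, $\mathbf H_m$ on the magnetic eigenbase, and the approximate system is closed by inverting the Gram matrix $\{(\sigma\varphi_j\virg\varphi_i)_{\Ld}\}$ (Lemma~\ref{lm2.2}); its second energy estimate differentiates the discrete Amp\`ere law in time and tests with $\mathbf E_m$, which is why the bound \eqref{2.16} on $\|\mathbf E_m(0)\|_{\Ld}$ is needed. You instead make the parabolic reduction \eqref{parabolic} rigorous --- the identity $\sigma\mathbf E=\nabla\times\mathbf H-\Je$ does follow from \eqref{weak-ampere}, since $\mathbf H(t)\in\Huz$ and $\Hu$ is dense in $\Ld(\Omega\mathbin{;}\mathbb R^3)$ --- and you run a one-field scheme on the same eigenbase, getting the second estimate by testing with $\partial_t\mathbf H_N$ and integrating the $\Je$-term by parts in time. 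These are dual versions of the same computation, and both hinge on the identical Bessel-type inequality: your bound on $\|\nabla\times\mathbf H_N(0)\|_{\Ld}$ in terms of $\|\nabla\times\mathbf H_0\|_{\Ld}$ is precisely the paper's \eqref{peso-ex-11}--\eqref{peso-ex-13} (the constant picks up a factor of $\Lambda$ from the $\mu$-weights, which is harmless). Your route buys an identity mass matrix, no Gram inversion, a standard parabolic structure, and \eqref{weak-ampere} recovered for free in the limit; its one extra obligation, which you should state explicitly, is that the reconstructed $\mathbf E=\sigma^{-1}(\nabla\times\mathbf H-\Je)$ belongs to $L^2(0,T\mathbin{;}\Hu)$ as Definition~\ref{def-weak} demands --- immediate, because \eqref{weak-faraday} tested on $C^1_0$ fields identifies the distributional curl $\nabla\times\mathbf E=\Jm-\mu\partial_t\mathbf H\in\Ld(\Omega\mathbin{;}\mathbb R^3)$. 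The paper's route, in exchange, approximates both fields simultaneously, never inverts $\sigma$ pointwise (only finite Gram matrices), and carries over verbatim to the weaker solutions of Remark~\ref{veryweak}, where the $\partial_t\mathbf H$ estimate is unavailable and no reduction to a single parabolic equation with $X_\mu$-valued time derivative is possible. Your closing observation about why the conductivity-independent eigenbase is essential --- its elements are $H^1$ by Lemma~\ref{mettidiv}, unlike eigenfields of $\nabla\times(\sigma^{-1}\nabla\times\,\cdot\,)$ --- is exactly the paper's motivation for constructing it.
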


\begin{rmq}\label{veryweak}
When considering initial data $\mathbf H_0$ that belong merely to $X_\mu$, it is still possible to define solutions 
of \eqref{1.1}
in a weaker sense than that of Definition~\ref{def-weak},  
just requiring $\partial_t\mathbf H$ to take values in $Y_\mu'$ rather than in 
$X_\mu$, 
and replacing the
scalar product $(\partial_t\mathbf H\virg\psi)_{X_\mu}$ 
in the left hand-side of \eqref{weak-ampere} 
with the duality pairing $\langle\partial_t\mathbf H\virg \psi\rangle_{Y_\mu'\times Y_\mu}$. 
For a given $\mathbf H_0\in X_\mu\setminus Y_\mu$, the existence of solutions $(\mathbf E\virg\mathbf H)$ in this weaker sense
could be proved arguing similarly as done below to prove Theorem~\ref{teo1}, except that the final apriori estimate would be the following one
\begin{equation}
\label{energyestapriori}
\begin{split}
	\int_0^T \|  \mathbf E(t)  \|^2_{\Ld}\,dt  + \sup_{t\in[0,T]}\|\mathbf H(t)\|_{\Ld}^2& +\int_0^T\|\partial_t\mathbf H(t)\|_{Y_\mu'}^2\,dt\\
	& \le C \Big( \|\mathbf H_0\|_{\Ld}^2  +\int_0^T\big[ \|\Je(t)\|_{\Ld}^2+\|\Jm(t)\|_{\Ld}^2\big]\,dt\Big)\,,
\end{split}
\end{equation}
for a suitable constant $C$, again depending on $\Lambda$ and $T$, only.
\end{rmq}

\subsection{Magnetic eigenbase}\label{ss:2.1}
We fix $\mu\in W^{1,\infty}(\Omega)$ satisfying conditions \eqref{1.2i}.
\begin{lm}\label{lm:density}
The space $Y_\mu$ is dense in $X_\mu$, with respect to the weak convergence in $X_\mu$.
\end{lm}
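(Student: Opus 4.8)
The plan is to exploit the fact that $Y_\mu$ is a linear subspace of the Hilbert space $X_\mu$, so that its weak closure coincides with its strong closure (by Mazur's theorem, the weak and strong closures of a convex set agree). Consequently, weak density of $Y_\mu$ in $X_\mu$ is equivalent to the vanishing of the orthogonal complement of $Y_\mu$ with respect to the scalar product \eqref{weight1}: it suffices to show that the only $w\in X_\mu$ satisfying $(w\virg\psi)_{X_\mu}=0$ for all $\psi\in Y_\mu$ is $w=0$. Recalling \eqref{weight1}, this orthogonality condition reads $\int_\Omega \mu\, w\cdot\psi\,dx=0$. Note also that, since $\mu$ is bounded above and below by \eqref{1.2i}, the $X_\mu$-topology coincides with the $L^2$-topology, so no distinction between weighted and unweighted weak convergence is needed.

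First I would fix such a $w$ and manufacture a sufficiently rich supply of test fields inside $Y_\mu$. To this end, take an arbitrary $\psi\in C^\infty_c(\Omega\mathbin{;}\mathbb R^3)$. Such a field lies in $H^1(\Omega\mathbin{;}\mathbb R^3)$ and satisfies $\psi\times n=0$ (trivially, being compactly supported), so the second part of Lemma~\ref{lm:hh} applies and produces a decomposition $\psi=\nabla q+\zeta$ with $q\in H^2(\Omega)\cap H^1_0(\Omega)$ and, crucially, $\zeta\in Y_\mu$.

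The key computation is then to test $w$ against this $\zeta$. Using \eqref{hh2} and the definition of the $X_\mu$-scalar product,
\[
(w\virg\zeta)_{X_\mu}=\int_\Omega\mu\, w\cdot\psi\,dx-\int_\Omega\mu\, w\cdot\nabla q\,dx\,.
\]
The last integral vanishes because $w\in X_\mu$ and $q\in H^1_0(\Omega)$, so the defining condition in \eqref{XmuYmu} applies. Hence the orthogonality relation $(w\virg\zeta)_{X_\mu}=0$ forces $\int_\Omega\mu\, w\cdot\psi\,dx=0$ for every $\psi\in C^\infty_c(\Omega\mathbin{;}\mathbb R^3)$. Since $\mu\ge\Lambda^{-1}>0$, this yields $\mu w=0$, whence $w=0$, which establishes that $Y_\mu^\perp=\{0\}$ in $X_\mu$ and therefore completes the argument (in fact proving strong, hence weak, density).

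The main conceptual step is the reduction of weak density to the triviality of the orthogonal complement; thereafter the proof is short. The only point demanding care is guaranteeing that the Helmholtz decomposition keeps the divergence-free remainder $\zeta$ inside $Y_\mu$ rather than merely in $X_\mu$ — this is exactly the role played by the boundary condition $\psi\times n=0$ in the second part of Lemma~\ref{lm:hh}, which is automatic for compactly supported $\psi$. Everything else reduces to the defining property of $X_\mu$ and the strict positivity of $\mu$.
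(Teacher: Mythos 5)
Your proof is correct, and it takes a genuinely different route from the paper's. The paper argues constructively: it fixes $\phi\in X_\mu$, approximates it by fields $\phi_i\in C^1_0(\Omega\mathbin{;}\mathbb R^3)$, applies Lemma~\ref{lm:hh} to each $\phi_i$ to write $\phi_i=\nabla q_i+\zeta_i$ with $\zeta_i\in Y_\mu$, and then shows that $(\zeta_i)$ converges to $\phi$ weakly in $X_\mu$; verifying that weak convergence requires a \emph{second} application of Lemma~\ref{lm:hh}, to an arbitrary test field $\tilde\eta\in L^2(\Omega\mathbin{;}\mathbb R^3)$, combined with the defining property of $X_\mu$ to annihilate the gradient terms. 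You instead reduce the statement, via Mazur's theorem and the Hilbert structure of $X_\mu$, to the triviality of the orthogonal complement of $Y_\mu$ in $X_\mu$, and then run essentially the same computation only once: decompose a compactly supported smooth $\psi=\nabla q+\zeta$ with $\zeta\in Y_\mu$ (the tangential-trace hypothesis of Lemma~\ref{lm:hh} being automatic for such $\psi$), and use $w\in X_\mu$, $q\in H^1_0(\Omega)$ to get $(w\virg\zeta)_{X_\mu}=\int_\Omega\mu\,w\cdot\psi\,dx$, which forces $w=0$. Both proofs pivot on the same two ingredients---the refined Helmholtz decomposition of Lemma~\ref{lm:hh} and the orthogonality of $X_\mu$ to gradients of $H^1_0(\Omega)$ functions---but your duality formulation is shorter and yields a strictly stronger conclusion, namely norm density of $Y_\mu$ in $X_\mu$, from which weak (sequential) density follows. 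It is also worth noting that the paper invokes Lemma~\ref{lm:density} exactly in your form: in the injectivity step of Lemma~\ref{Friedrichs} and at the end of the proof of Theorem~\ref{teo1}, orthogonality to all of $Y_\mu$ is upgraded to orthogonality to all of $X_\mu$, which is precisely the statement $Y_\mu^\perp=\{0\}$ that you prove directly. What the paper's approach buys in exchange is an explicit approximating sequence for a given $\phi\in X_\mu$, which your nonconstructive argument does not exhibit.
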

\begin{proof}

We fix $\phi\in X_\mu$. By standard density results, there exists a sequence
$(\phi_i)\subset C^1_0(\Omega\mathbin{;}\mathbb R^3)$ with
\begin{equation}
\label{den1}
	\lim_{i\to\infty} \int_\Omega (\phi-\phi_i)\cdot \eta \,dx  = 0 \,, \quad \text{for all }\eta\in \Ld(\Omega\mathbin{;}\mathbb R^3)\,.
\end{equation}
By Lemma~\ref{lm:hh}, there exist $(q_i)\subset H^2(\Omega)\cap H^1_0(\Omega)$ and $(\zeta_i)\subset Y_\mu$ with
$\phi_i=\nabla q_i+\zeta_i$, and we have
\begin{equation}
\label{den1.5}
\int_\Omega\mu\, \nabla q_i\cdot\nabla v\,dx = \int_\Omega\mu\, \phi_i\cdot \nabla v\,dx\,,\qquad \text{for all }v\in H^1_0(\Omega)\,.
\end{equation}

 We prove that $(\zeta_i)$ converges to $\phi$ weakly in $L^2(\Omega\mathbin{;}\mathbb R^3)$.
To do so, by \eqref{den1}, it suffices to prove
\begin{equation}
\label{den2}
	\lim_{i\to\infty} \int_\Omega \mu\,\nabla q_i\cdot \eta \,dx  = 0 \,, 
\end{equation}
for all $\tilde\eta\in L^2(\Omega\mathbin{;}\mathbb R^3)$. We fix a test field $\tilde\eta$ and, using again Lemma~\ref{lm:hh}, we write $\tilde\eta = \nabla q+  \zeta$
for suitable $q\in H^1_0(\Omega)$ and $\zeta\in X_\mu$. Inserting $v = q$ in \eqref{den1.5} we obtain
\[
\int_\Omega \mu\,\nabla q_i\cdot \nabla q\,dx =  \int_\Omega\mu\, \phi_i\cdot \nabla q\,dx\,.
\]
Passing to the limit in the latter, using \eqref{den1}, and recalling that $\phi\in X_\mu$, we get
\begin{equation}
\label{den3}
	\lim_{i\to\infty} \int_\Omega\mu\, \nabla q_i\cdot \nabla q\,dx = \int_\Omega\mu\, \phi\cdot \nabla q\,dx =0\,.
\end{equation}
Since $q_i\in H^1_0(\Omega)$ for all $i\in\mathbb N$ and $\zeta\in X_\mu$, we also have
\begin{equation}
\label{den4}
\lim_{i\to\infty}\int_\Omega\mu\, \nabla q_i\cdot \zeta\,dx = 0\,.
\end{equation}
Summing \eqref{den3} and \eqref{den4} and recalling that  $\tilde\eta = \nabla q + \zeta$ we get \eqref{den2}. Since $\tilde\eta$ was arbitrary, we deduce
that $(\zeta_i)$ converges to $\phi$ weakly in $\Ld(\Omega\mathbin{;}\mathbb R^3)$. 
By \eqref{1.2i}, this implies that  $(\zeta_i)$ converges to $\phi$ with respect to the weak topology in $X_\mu$ relative to the scalar product \eqref{weight1}, too, as desired.
\end{proof}

The proof of the following spectral decomposition is based on standard methods, but we present it for sake of completeness.
\begin{lm}\label{Friedrichs}
There exists a sequence 
$0\le\lambda_1\le\lambda_2\le\ldots$, with $\lambda_i\to+\infty$ as $i\to\infty$, and a sequence $(\psi_i)\subset Y_\mu$, such that
$(\psi_i)$ is a complete orthonormal system in $X_\mu$ and for all $i\in\mathbb N$ we have
\begin{equation}
\label{eigenfri}
	\int_\Omega \mu\, \nabla\times\psi_i\cdot\nabla\times \phi\,dx 
	=\lambda_i \int_\Omega\mu\, \psi_i\cdot\phi\,dx\,,\quad \text{for all $\phi\in H^1(\Omega\mathbin{;}\mathbb R^3)$,}
\end{equation}
and $\psi_i\times n=0$ in $H^{-\frac{1}{2}}(\partial\Omega)$. Moreover, 
for every $i,j\in\mathbb N$ we have
\begin{equation}
\label{eigenfrion}
	\int_\Omega \mu\,\nabla \times\psi_i\cdot\nabla\times\psi_j\,dx = \lambda_{j}\delta_{ij}
\end{equation}
where $\delta_{ij}=1$ if $i=j$ and $\delta_{ij}=0$ otherwise.
\end{lm}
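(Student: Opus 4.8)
The plan is to recognise \eqref{eigenfri} as the diagonalisation of a compact, self-adjoint, positive operator on $X_\mu$, produced from the compact embedding $Y_\mu\hookrightarrow X_\mu$ of Remark~\ref{rmq:norm2}. First I would introduce a solution operator $T\colon X_\mu\to Y_\mu$. By Remark~\ref{rmq:norm} the bilinear form $(\varphi,\psi)_{Y_\mu}:=\int_\Omega\mu\,\nabla\times\varphi\cdot\nabla\times\psi\,dx+\int_\Omega\mu\,\varphi\cdot\psi\,dx$ is a scalar product on $Y_\mu$ inducing a topology equivalent to the $H^1$ one, so for every $f\in X_\mu$ the functional $v\mapsto(f,v)_{X_\mu}=\int_\Omega\mu\,f\cdot v\,dx$ is bounded on $Y_\mu$ and is represented by a unique $Tf\in Y_\mu$ with $(Tf,v)_{Y_\mu}=(f,v)_{X_\mu}$ for all $v\in Y_\mu$. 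Testing this relation with $v=Tf$ gives $\|Tf\|_{Y_\mu}^2=(f,Tf)_{X_\mu}\le\|f\|_{X_\mu}\|Tf\|_{Y_\mu}$, hence $\|Tf\|_{Y_\mu}\le\|f\|_{X_\mu}$; composing the resulting bounded map $X_\mu\to Y_\mu$ with the compact inclusion $Y_\mu\hookrightarrow X_\mu$ (Remark~\ref{rmq:norm2}) shows that $T$ is compact on $X_\mu$. Symmetry of $(\cdot,\cdot)_{Y_\mu}$ and of $(\cdot,\cdot)_{X_\mu}$ yields $(Tf,g)_{X_\mu}=(f,Tg)_{X_\mu}$, so $T$ is self-adjoint, while $(Tf,f)_{X_\mu}=\|Tf\|_{Y_\mu}^2\ge0$ makes it positive; finally $T$ is injective, because $Tf=0$ forces $(f,v)_{X_\mu}=0$ for all $v\in Y_\mu$, and $Y_\mu$ is norm-dense in $X_\mu$ (the weak density of Lemma~\ref{lm:density} upgrades to norm density, a linear subspace having the same weak and strong closure).

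Next I would apply the spectral theorem for compact self-adjoint operators on the separable Hilbert space $X_\mu$: there is a complete orthonormal system $(\psi_i)$ of eigenvectors, $T\psi_i=\theta_i\psi_i$, and positivity together with injectivity forces $\theta_i>0$ and $\theta_i\to0$. Since $\psi_i=\theta_i^{-1}T\psi_i\in\operatorname{ran}T\subset Y_\mu$, each $\psi_i$ lies in $Y_\mu\subset\Huz$, whence $\psi_i\times n=0$. Rewriting the defining relation for $f=\psi_i$ gives, for all $v\in Y_\mu$, $\theta_i\int_\Omega\mu\,\nabla\times\psi_i\cdot\nabla\times v\,dx=(1-\theta_i)\int_\Omega\mu\,\psi_i\cdot v\,dx$; the elementary bound $\|T\psi_i\|_{Y_\mu}\le\|\psi_i\|_{X_\mu}\le\|\psi_i\|_{Y_\mu}$ gives $\theta_i\le1$. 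I would then set $\lambda_i:=\theta_i^{-1}-1\ge0$ and order the eigenvalues so that $\theta_i$ is nonincreasing, which makes $(\lambda_i)$ nondecreasing with $\lambda_i\to+\infty$; dividing the last identity by $\theta_i$ produces \eqref{eigenfri} for every $v\in Y_\mu$.

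To obtain \eqref{eigenfrion} I would take $\phi=\psi_j$ in this $Y_\mu$-version and use the $X_\mu$-orthonormality $(\psi_i,\psi_j)_{X_\mu}=\delta_{ij}$. To enlarge the class of test fields to all of $\Huz$, I would invoke Lemma~\ref{lm:hh}: any $\psi\in\Huz$ splits as $\psi=\nabla q+\zeta$ with $q\in H^1_0(\Omega)$ and $\zeta\in Y_\mu$; the left-hand side of \eqref{eigenfri} ignores $\nabla q$ because $\nabla\times\nabla q=0$, and the right-hand side ignores it because $\psi_i\in X_\mu$ is $\mu$-orthogonal to gradients of $H^1_0$ functions, so the identity for $\zeta$ transfers verbatim to $\psi$. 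This already yields \eqref{eigenfri} for every test field of vanishing tangential trace, which is all that the Galerkin construction of Section~\ref{s:ex} will call upon.

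The step I expect to be the crux is passing to test fields $\phi$ with nonvanishing tangential trace. Running the same decomposition on a general $\phi\in H^1(\Omega\mathbin{;}\mathbb R^3)$ leaves a gradient remainder whose boundary trace no longer cancels, and integrating by parts through the Green formula \eqref{2.2} reveals that the full identity \eqref{eigenfri} is tantamount to the natural boundary conditions $(\mu\,\nabla\times\psi_i)\times n=0$ and $\mu\,\psi_i\cdot n=0$ on $\partial\Omega$. These are not delivered by the variational construction, which only enforces the essential condition $\psi_i\times n=0$; so this is the point at which one must either justify the natural conditions by elliptic regularity or restrict the admissible test class accordingly, and it is where I would concentrate the effort.
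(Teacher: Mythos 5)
Your argument is the paper's own, essentially line for line: your operator $T$ is exactly the operator $\mathcal{R}$ defined by the variational problem \eqref{varprob}, and the chain bounded, compact (via Remark~\ref{rmq:norm2}), symmetric, positive, injective (via Lemma~\ref{lm:density}), followed by the spectral theorem, the substitution $\lambda_i=\theta_i^{-1}-1$, and the derivation of \eqref{eigenfrion} by testing with $\psi_j$, coincides with the paper's proof.

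One point is worth adding, concerning your final paragraph. The ``crux'' you flag is real, and the paper does not resolve it either: its proof stops at the identity \eqref{eigenfri} for $\phi\in Y_\mu$ (which, as you note via Lemma~\ref{lm:hh}, extends to $\phi\in\Huz$) and then declares the lemma proved. Your diagnosis of why the extension to all of $H^1(\Omega\mathbin{;}\mathbb R^3)$ cannot come for free is correct: testing \eqref{eigenfri} with $\phi=\nabla u$ for arbitrary $u\in H^1(\Omega)$ would force $\mu\psi_i\cdot n=0$ in $H^{-\frac12}(\partial\Omega)$ (since $\nabla\cdot(\mu\psi_i)=0$ in $\Omega$), which together with the essential condition $\psi_i\times n=0$ would annihilate the entire trace of $\psi_i$; likewise, Green's formula \eqref{2.2} applied with $\varphi=\mu\nabla\times\psi_i$ would force $(\mu\nabla\times\psi_i)\times n=0$. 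Neither natural condition is produced by a variational problem posed on $Y_\mu$, whose test fields all have vanishing tangential trace, and both fail for generic eigenfields. So the test class in the statement is an overstatement; the harmless reading is \eqref{eigenfri} for $\phi\in\Huz$ (or $Y_\mu$), which is the only version the paper ever invokes --- in \eqref{peso-ex-13}, inside the proof of Lemma~\ref{lm2.2}, it is applied with $\phi=\mathbf H_0\in Y_\mu$. In short: you proved exactly what the paper's proof proves, and the gap you isolate is a defect of the statement as written, not of your argument.
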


\begin{proof}
By Remark~\ref{rmq:norm} and Lax-Milgram Lemma, the linear operator $\mathcal{R}$ from $X_\mu$ to $X_\mu$ that takes every $\mathbf F\in X_\mu$ to
the corresponding solution $\psi\in Y_\mu$ of the following variational problem
\begin{equation}
\label{varprob}
	 \int_\Omega\mu\,\nabla\times\psi\cdot\nabla\times\phi\,dx+\int_\Omega\mu\,\psi\cdot\phi\,dx=\int_\Omega\mu\, \mathbf F\cdot \phi\,dx\,, 
	 \quad \text{for all } \phi\in Y_\mu\,,
\end{equation}
is well defined. Moreover, for every $\mathbf F\in X_\mu$, plugging in $\psi=\mathcal{R}\mathbf F$ in \eqref{varprob} yields 
\begin{equation}
\label{bddop}
\|\mathcal{R}\mathbf F\|_{Y_\mu}\le \|\mathbf F\|_{X_\mu}\,.
\end{equation}
Clearly $\|\mathcal{R}\mathbf F\|_{Y_\mu}\ge\|\mathcal{R}\mathbf F\|_{X_\mu}$. Then, by  \eqref{bddop},  $\mathcal{R}$  has 
 operator norm bounded by $1$.

We observe that $\mathcal{R}$ is injective. 
Indeed, by definition if $\mathbf F$ belongs to the kernel of $\mathcal{R}$ then $\psi=0$ is the solution of \eqref{varprob}. Thus, 
\(
	(\mathbf F\virg \phi)_{X_\mu}=0
\)
for all $\phi\in Y_\mu$. By Lemma~\ref{lm:density}, the latter holds in fact for all $\phi\in X_\mu$, hence $\mathbf F=0$.

Also, 
\(
(\mathbf F\virg\mathcal{R}\mathbf F)_{X_\mu}\ge0
\)
and
\(
(\mathbf F\virg \mathcal{R}\mathbf G)_{X_\mu}  = (\mathbf G\virg \mathcal{R}\mathbf F)_{X_\mu}
\), for every $\mathbf F,\mathbf G\in X_\mu$,
i.e., $\mathcal{R}$ is a positive and symmetric operator.

In addition, $\mathcal{R} $ is compact. Indeed,
given a bounded sequence $(\mathbf F_i)\subset X_\mu$, the sequence
$(\mathcal{R}\mathbf F_i)$ is bounded in $Y_\mu$  by \eqref{bddop}. By Remark~\ref{rmq:norm2},
it follows that 
$(\mathcal{R}\mathbf F_i)$ is precompact in $X_\mu$.

Therefore, $\mathcal{R} $ is a positive, compact, self-adjoint operator with trivial kernel from $X_\mu$ to itself,
having operator norm bounded by $1$. By the Spectral Theorem, there exists a sequence
$(\tau_i)\subset(0,1]$ and a Hilbert basis $(\psi_i)$ of $X_\mu$ with $\psi_i\in Y_\mu$ and $\mathcal{R}\psi_i=\tau_i\psi_i$ for all $i\in\mathbb N$, and the first statement
follows just setting $\lambda_i= \tau_i^{-1}-1$.

Eventually, we fix $i,j\in\mathbb N$, we test equation \eqref{eigenfri} with $\phi=\psi_j$, and we get
\[
	\int_\Omega\mu\, \nabla\times\psi_i\cdot\nabla\times\psi_j\,dx = \lambda_i\int_\Omega\mu\, \psi_i\cdot\psi_j\,dx\,.
\]
Since $(\psi_i)$ is orthonormal in $X_\mu$ with respect to \eqref{weight1}, this gives \eqref{eigenfrion} and concludes the proof.
\end{proof}

\begin{rmq}\label{prequo}
Incidentally, Lemma~\ref{Friedrichs}, implies in particular that the vector space
\begin{equation}
\label{aniso-tg0-harm}
H_\mu	= \Big\{h \in L^2(\Omega\mathbin{;}\mathbb R^3)\colon \nabla\cdot(\mu h)=0\,, \ \nabla\times h=0\,, \ h\times n=0\Big\}
\end{equation}
is finite-dimensional, because it consists of solutions of \eqref{eigenfri} corresponding to the null eigenvalue. In other words, the least eigenvalue
either equals zero or is positive depending on whether or not $\Omega$ supports non-trivial vector fields within \eqref{aniso-tg0-harm}.

We note that \eqref{aniso-tg0-harm} is trivial if $\Omega$ is {\em contractible}, i.e., if there exists $x_0\in\Omega$ and a function $g\in C^\infty([0,1]\times\Omega\mathbin{;}\Omega)$
with $g(0,\cdot)={\rm id}_\Omega$ and $g(1,x)=x_0$ for all $x\in\Omega$. For example, $\Omega$ has this property if it is simply connected and $\partial\Omega$ is connected; in this case, 
every $h\in H_\mu$ is the gradient of a scalar potential $w$, and $w$ is a weak solution
of the elliptic equation $\nabla\cdot (\mu\nabla w)  =0$ with homogeneous Dirichlet boundary conditions, hence it is a constant.
\end{rmq}

To prove Theorem~\ref{teo1}, we observe that
$\Hu$, with the scalar product induced by \eqref{2.1}, is a separable Hilbert space. Thus it admits a complete orthonormal system; we pick one, and we denote
it by
 $(\varphi_i)$. Then, let $(\psi_i)$ be the complete orthonormal system of $X_\mu$ introduced in Section~\ref{ss:2.1}, with $(\lambda_i)$ being the sequence
of all corresponding eigenvalues, counted with multiplicity.

\subsection{Approximate solutions}
Given $\Je\in L^2(0,T\mathbin{;}L^2(\Omega\mathbin{;}\mathbb R^3))$, $\Jm\in L^2(0,T\mathbin{;}X)$, and $\mathbf H_0\in X_\mu$, we set
\begin{equation}
\label{2.11}
	\mathbf H_{0m} = \sum_{j=1}^m (\mathbf H_0\virg \psi_j)_{X_\mu} \psi_j\,,
\end{equation}
and following Galerkin's scheme, we seek approximate solutions having the structure
\begin{equation}
\label{2.12}
	\mathbf E_m(t) = \sum_{j=1}^m \mathrm e_{jm}(t)\varphi_j\,,
\qquad	\mathbf H_m(t) = \sum_{j=1}^m \mathrm h_{jm}(t)\psi_j\,.
\end{equation}
More precisely, we prescribe the validity of the following $2m$ equations
\begin{subequations}\label{2.13}
\renewcommand{\theequation}{\theparentequation \roman{equation}}
\begin{equation}\label{2.13i}
\begin{split}
 \int_\Omega \nabla\times \mathbf H_m\cdot\varphi_i\,dx -\int_\Omega\sigma \mathbf E_m\cdot\varphi_i\,dx = \int_\Omega\Je\cdot \varphi_i\,dx\,,& \quad
 i=1,2,\ldots,m
\end{split}
\end{equation}
\begin{equation}\label{2.13ii}
\begin{split}
\int_\Omega \nabla\times\mathbf E_m\cdot \psi_i\,dx + \int_\Omega\mu\,  \partial_t\mathbf H_m\cdot \psi_i\,dx = \int_\Omega \Jm\cdot \psi_i\,dx\,, & \quad  i=1,2,\ldots,m
\end{split}
\end{equation}
\end{subequations}
and of the initial conditions
\begin{equation}\label{2.14}
	\mathbf H_m(0)=\mathbf H_{0m}\,.
\end{equation}

\begin{lm}\label{lm2.2} Let $\mathbf H_0\in X_\mu$. Then, there exists a unique solution 
\begin{equation}
\label{2.15}
	(\mathbf E_m\virg\mathbf H_m)\in C^1\left([0,T]\mathbin{;}{\rm Span}\{\varphi_1,\ldots,\varphi_m\}\times{\rm Span}\{\psi_1,\ldots,\psi_m\}\right)
\end{equation}
of the system \eqref{2.13} satisfying \eqref{2.14}. If in addition we have $\mathbf H_0\in Y_\mu$, then
\begin{equation}
\label{2.16}
\|\mathbf E_m(0)\|_{\Ld} \le C \big( \|\nabla \times \mathbf H_{0}\|_{\Ld} + \|\Je(0)\|_{\Ld}\big)\,.
\end{equation}
for a constant $C$ depending only on $\Lambda$.
\end{lm}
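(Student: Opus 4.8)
The plan is to collapse the $2m$ coupled relations \eqref{2.13} into a single closed, constant-coefficient system of ordinary differential equations for the magnetic coefficients. Inserting $\mathbf E_m=\sum_j\mathrm e_{jm}\varphi_j$ and $\mathbf H_m=\sum_j\mathrm h_{jm}\psi_j$ into \eqref{2.13i} and \eqref{2.13ii} and collecting coefficients, the two families become the algebraic and differential relations $A\mathrm h-S\mathrm e=f$ and $B\mathrm e+\mathrm h'=g$, where $\mathrm e=(\mathrm e_{jm})$ and $\mathrm h=(\mathrm h_{jm})$ are the coefficient vectors, $S_{ij}=\int_\Omega\sigma\varphi_j\cdot\varphi_i\,dx$, $A_{ij}=\int_\Omega\varphi_i\cdot\nabla\times\psi_j\,dx$, $B_{ij}=\int_\Omega\psi_i\cdot\nabla\times\varphi_j\,dx$, and $f_i=\int_\Omega\Je\cdot\varphi_i\,dx$, $g_i=\int_\Omega\Jm\cdot\psi_i\,dx$. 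Three structural facts make this work: the mass matrix attached to the $\psi_i$ is the identity, because $(\psi_i)$ is orthonormal for the scalar product \eqref{weight1}; the matrix $S$ is symmetric and, by the coercivity \eqref{1.2ii}, positive definite, hence invertible; and, most importantly, $A=B^{\mathsf T}$, which I would read off from the Green formula \eqref{2.2} applied to $\varphi_i$ and $\psi_j$ together with the homogeneous tangential condition $\psi_j\times n=0$ satisfied by the eigenfields of Lemma~\ref{Friedrichs}.

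Granting these, I would solve the first relation for $\mathrm e=S^{-1}(A\mathrm h-f)$ and substitute it into the second, arriving at $\mathrm h'=-BS^{-1}B^{\mathsf T}\mathrm h+(g+BS^{-1}f)$, a linear ODE with constant coefficient matrix, supplemented by the initial value $\mathrm h(0)$ determined by \eqref{2.14} and \eqref{2.11}. The variation-of-constants formula then yields a unique solution $\mathrm h$ on all of $[0,T]$; its regularity matches that of the forcing, the $\Je$-contribution of which is continuous because $\partial_t\Je\in L^2(0,T\mathbin{;}\Ld)$ forces $\Je\in C([0,T]\mathbin{;}\Ld)$, so that the pair has the regularity asserted in \eqref{2.15}. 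The electric coefficients are recovered algebraically from $\mathrm e=S^{-1}(A\mathrm h-f)$, and uniqueness of $(\mathbf E_m\virg\mathbf H_m)$ is immediate from uniqueness of $\mathrm h$ and invertibility of $S$.

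For the bound \eqref{2.16} under the extra hypothesis $\mathbf H_0\in Y_\mu$, I would evaluate \eqref{2.13i} at $t=0$, where $\mathbf H_m(0)=\mathbf H_{0m}$, multiply the $i$-th equation by $\mathrm e_{im}(0)$ and sum over $i$, obtaining $\int_\Omega\sigma\mathbf E_m(0)\cdot\mathbf E_m(0)\,dx=\int_\Omega\nabla\times\mathbf H_{0m}\cdot\mathbf E_m(0)\,dx-\int_\Omega\Je(0)\cdot\mathbf E_m(0)\,dx$. The left-hand side is at least $\Lambda^{-1}\|\mathbf E_m(0)\|_{\Ld}^2$ by \eqref{1.2ii}, and a Cauchy--Schwarz estimate on the right, after division by $\|\mathbf E_m(0)\|_{\Ld}$, gives $\|\mathbf E_m(0)\|_{\Ld}\le\Lambda(\|\nabla\times\mathbf H_{0m}\|_{\Ld}+\|\Je(0)\|_{\Ld})$. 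It then remains to dominate $\|\nabla\times\mathbf H_{0m}\|_{\Ld}$ by $\|\nabla\times\mathbf H_0\|_{\Ld}$. Since $\mathbf H_{0m}$ is the $X_\mu$-orthogonal projection of $\mathbf H_0$, I would write $\mathbf H_0=\mathbf H_{0m}+R_m$ with $R_m\in Y_\mu$, note that $(\psi_j\virg R_m)_{X_\mu}=0$ for $j\le m$, and use \eqref{eigenfri} with $\phi=R_m$ to see that $\int_\Omega\mu\,\nabla\times\psi_j\cdot\nabla\times R_m\,dx=\lambda_j(\psi_j\virg R_m)_{X_\mu}=0$; hence the cross term in the expansion of $\int_\Omega\mu|\nabla\times\mathbf H_0|^2\,dx$ vanishes and $\int_\Omega\mu|\nabla\times\mathbf H_{0m}|^2\,dx\le\int_\Omega\mu|\nabla\times\mathbf H_0|^2\,dx$. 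The bounds $\Lambda^{-1}\le\mu\le\Lambda$ from \eqref{1.2i} convert this into $\|\nabla\times\mathbf H_{0m}\|_{\Ld}\le\Lambda\|\nabla\times\mathbf H_0\|_{\Ld}$, and \eqref{2.16} follows with $C=\Lambda^2$.

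The step I expect to be the genuine crux, as opposed to routine ODE bookkeeping, is the identity $A=B^{\mathsf T}$: it is the discrete shadow of the integration by parts that renders the $(\mathbf E\virg\mathbf H)$ coupling skew-adjoint, and were the tangential trace of the $\psi_i$ not to vanish a surviving boundary term would spoil the cancellation and prevent the system from closing into an ODE for $\mathrm h$ alone. A second, more delicate point is the energy monotonicity $\|\nabla\times\mathbf H_{0m}\|_{\Ld}\le\Lambda\|\nabla\times\mathbf H_0\|_{\Ld}$, which is precisely where the stronger hypothesis $\mathbf H_0\in Y_\mu$ (rather than merely $X_\mu$) enters: through Lemma~\ref{mettidiv} it guarantees $\mathbf H_0\in H^1(\Omega\mathbin{;}\mathbb R^3)$, so that $\nabla\times\mathbf H_0\in\Ld$ and \eqref{eigenfri} may legitimately be tested against the tail $R_m$.
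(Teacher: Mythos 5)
Your proposal is correct and follows essentially the same route as the paper: elimination of $\vec{\mathrm e}_m$ via the inverse of the $\sigma$-mass matrix, the symmetry $(\nabla\times\psi_j\virg\varphi_i)_{\Ld}=(\nabla\times\varphi_i\virg\psi_j)_{\Ld}$ coming from \eqref{2.2} and $\psi_j\times n=0$, reduction to a linear constant-coefficient ODE for $\vec{\mathrm h}_m$, recovery of $\vec{\mathrm e}_m$ algebraically, and the identical $t=0$ energy identity leading to $\Lambda^{-1}\|\mathbf E_m(0)\|_{\Ld}\le\|\nabla\times\mathbf H_{0m}\|_{\Ld}+\|\Je(0)\|_{\Ld}$. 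The only deviation is the last step: you obtain $\|\nabla\times\mathbf H_{0m}\|_{\Ld}\le\Lambda\|\nabla\times\mathbf H_0\|_{\Ld}$ by a Pythagoras-type argument (the cross term vanishes upon testing \eqref{eigenfri} with the tail $R_m$, which lies in $H^1(\Omega\mathbin{;}\mathbb R^3)$ by Lemma~\ref{mettidiv}), whereas the paper expands $(\nabla\times\mathbf H_{0m}\virg\nabla\times\mathbf H_{0m})_{X_\mu}$ in the eigenbasis and applies Bessel's inequality to the orthonormal system $(\lambda_i^{-1/2}\nabla\times\psi_i)$ --- two equivalent uses of \eqref{eigenfri} and \eqref{eigenfrion} yielding the same constant $C=\Lambda^2$.
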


\begin{proof}
We write the system \eqref{2.13} in the form
\begin{equation}
\label{peso-ex-1}
	\begin{split}
		& (\nabla\times\mathbf H_m\virg \varphi_i)_{\Ld} - (\sigma\mathbf E_m\virg \varphi_i)_{\Ld} = (\Je\virg \varphi_i)_{\Ld}\,,\\
		& (\mu^{-1}\nabla	\times\mathbf E_m\virg \psi_i)_{X_\mu} + (\partial_t\mathbf H_m\virg \psi_i)_{X_\mu} =  (\mu^{-1}\Jm\virg\psi_i)_{X_\mu}\,.
	\end{split}
\end{equation}
Seeking solution with the structure \eqref{2.12} we are led to the $2m$ equations
\begin{subequations}
\label{peso-ex-2}
	\begin{align}
		\label{peso-ex-2i} 
			&\sum_{j=1}^m (\nabla\times\psi_j\virg \varphi_i)_{\Ld}{\mathrm h}_{jm}(t) - \sum_{j=1}^m(\sigma\varphi_j\virg\varphi_i)_{\Ld} \mathrm e_{jm}(t) = (\Je(t)\virg \varphi_i)_{\Ld}
			\quad  \text{for }i=1,\ldots,m\\
		\label{peso-ex-2ii}
			&  \sum_{j=1}^m(\nabla\times\varphi_j\virg \psi_i)_{\Ld}{\mathrm e}_{jm}(t)  +\sum_{j=1}^m (\mu\psi_j\virg \psi_i)_{\Ld} \frac{d}{dt}\mathrm h_{jm}(t) = (\Jm(t)\virg \psi_i)_{\Ld}
			 \quad \text{for }i=1,\ldots,m\,.
	\end{align}
\end{subequations}

By \eqref{1.2ii} and thanks to the fact that
$(\varphi_i)$ is a linearly independent system in $\Ld(\Omega\mathbin{;}\mathbb R^3)$, the quadratic form defined on $\mathbb R^m$ by
\begin{equation}
\label{peso-ex-fq}
	\mathcal{Q}(v)  = \sum_{i,j=1}^m(\sigma\varphi_j\virg\varphi_i)_{\Ld}v_iv_j\,, \qquad \text{for all }v\in\mathbb R^m\,,
\end{equation}
is positive definite and
$\mathcal{Q}(v)\ge \Lambda^{-1} |v|^2$, for all $v\in\mathbb R^m$.
The matrix $\{(\sigma\varphi_j\virg\varphi_i)_{\Ld}\}_{i,j=1}^m$ is symmetric because so is $\sigma$. Moreover, it is invertible and, denoting by $M^\sigma$ the inverse matrix (which is also symmetric), we have
\begin{equation}
\label{peso-ex-3}
	|M^\sigma v|\le \Lambda |v|^2\,,\qquad \text{for all }v\in\mathbb R^m\,.
\end{equation}
Then,
\eqref{peso-ex-2i} becomes
\begin{equation}
\label{peso-ex-4}
	\mathrm e_{im}(t) = \sum_{j,k=1}^mM^\sigma_{ik}(\nabla\times\psi_j\virg \varphi_k)_{\Ld} \mathrm h_{jm}(t) -\sum_{j=1}^m M^\sigma_{ij}(\Je(t)\virg\varphi_j)_{\Ld} \,,
	\quad
	i=1,\ldots,m\,.
\end{equation}

Since $(\psi_i)$ is an orthonormal system in $X_\mu$ with respect to the scalar product introduced in \eqref{weight1},  $(\mu\psi_i\virg \psi_j)_{\Ld}=\delta_{ij}$ for all
$i,j=1,\ldots,m$. Then \eqref{peso-ex-2ii} gives
\begin{equation}
\label{peso-ex-5}
	\frac{d}{dt}\mathrm h_{im} =-\sum_{j=1}^m (\nabla\times\varphi_j\virg \psi_i)_{\Ld} \mathrm e_{jm} + (\Jm\virg \psi_i)_{\Ld}\,, 
	\quad
	i=1,\ldots,m\,.
\end{equation}
Using \eqref{peso-ex-2i} to get rid of $\mathrm e_{jm}$ in \eqref{peso-ex-5}, we obtain
\begin{equation}
\label{peso-ex-5.5}
\begin{split}
	\frac{d}{dt}	\mathrm h_{im} 
	  = & -\sum_{j,k,\ell=1}^m   (\nabla\times\varphi_j\virg \psi_i)_{\Ld} 
	M^\sigma_{jk}(\nabla\times\psi_\ell\virg \varphi_k)_{\Ld} \mathrm h_{\ell m} \\
	& \qquad +\sum_{j,k=1}^m (\nabla\times\varphi_j\virg \psi_i)_{\Ld} M^\sigma_{jk}(\Je\virg\varphi_k)_{\Ld}
	+ (\Jm\virg \psi_i)_{\Ld}\,, \qquad i=1,\ldots,m\,.
\end{split}
\end{equation}

We set $\vec{\mathrm e}_m=(\mathrm e_{11}\virg\ldots\virg\mathrm e_{1m})$ and $\vec{\mathrm h}_m = (\mathrm h_{1m}\virg\ldots\virg\mathrm h_{mm})$.
We observe that, by \eqref{2.2}, for all $i,j=1,\ldots,m$ the scalar products
\(
(\nabla\times\psi_j\virg\varphi_i)_{\Ld}\) and \( (\nabla\times\varphi_i\virg\psi_j)_{\Ld}
\)
are equal and we denote by $A_{ij}$ their common value.
Then, the $m$ equations appearing in \eqref{peso-ex-5.5} can be recast in the form
\begin{equation}
\label{peso-ex-6}
	\frac{d}{dt}\vec {\mathrm h}_m = -A^TM^\sigma A \mathrm h_m + \vec{\mathrm b}_m\,,
\end{equation}
for a suitable $\vec{\mathrm b}_m\in L^2([0,T]\mathbin{;}\mathbb R^m)$.
By the standard existence theory for linear systems,
there exists $\vec {\mathrm h}_m \in C^1([0,T]\mathbin{;}\mathbb R^m)$ that solves  \eqref{peso-ex-6} for a.e.\ $t\in(0,T)$, with the initial conditions
\[
\vec{\mathrm h}_m(0)=((\mathbf H_0\virg\psi_1)_{X_\mu}\virg\ldots\virg(\mathbf H_0\virg \psi_m)_{X_\mu})\,.
\]
Then, we use \eqref{peso-ex-4} to define $\vec {\mathrm e}_m \in C^1([0,T]\mathbin{;}\mathbb R^m)$.
Therefore, by construction the functions $\mathbf E_m$ and $\mathbf H_m$ introduced in \eqref{2.12} are such that \eqref{peso-ex-1} is valid,
and the initial conditions \eqref{2.14} hold.

Now, we assume that $\mathbf H_0\in Y_\mu$. By \eqref{2.12} and \eqref{peso-ex-fq}, we have
\begin{equation}
\label{peso-ex-7}
(\sigma\mathbf E_m\virg \mathbf E_m)_{\Ld} = \mathcal{Q}(\vec{\mathrm e}_m)\,.
\end{equation}
Then we observe that  \eqref{peso-ex-4} implies
\begin{equation}
\label{peso-ex-8}
	\mathcal{Q}(\vec{\mathrm e}_m) = A \vec{\mathrm h}_m\cdot \vec{\mathrm e}_m - \sum_{i=1}^m (\Je\virg\varphi_i)_{\Ld} \mathrm e_{im}
	= (\nabla\times\mathbf H_m\virg \mathbf E_m)_{\Ld} -(\Je\virg \mathbf E_m)_{\Ld}\,,
\end{equation}
where in the second equality we simply used \eqref{2.12}. Since \eqref{peso-ex-7} and \eqref{peso-ex-8} holds, in particular, for $t=0$, we deduce that
\begin{equation}
\label{peso-ex-9}
	(\sigma\mathbf E_m(0)\virg\mathbf E_m(0))_{\Ld} 	= (\nabla\times\mathbf H_{0m}\virg \mathbf E_m(0))_{\Ld} - (\Je(0)\virg \mathbf E_m(0))_{\Ld}\,.
\end{equation}
By Cauchy-Schwartz inequality, we have
\[
	(\nabla\times\mathbf H_{0m}\virg \mathbf E_m(0))_{\Ld} - (\Je(0)\virg \mathbf E_m(0))_{\Ld}\le \Big[ \|\nabla\times\mathbf H_{0m}\|_{\Ld}+\|\Je(0)\|_{\Ld}\Big]
	\|\mathbf E_m(0)\|_{\Ld}\,.
\]
Using this and \eqref{1.2i}, from \eqref{peso-ex-9} we deduce
\begin{equation}
\label{peso-ex-10}
		\Lambda^{-1}\|\mathbf E_m(0)\|_{\Ld}\le  \|\nabla\times\mathbf H_{0m}\|_{\Ld}+\|\Je(0)\|_{\Ld}\,.
\end{equation}

By \eqref{1.2i}, we have
\begin{equation}
\label{peso-ex-11}
\|\nabla\times\mathbf H_{0m}\|_{\Ld}^2 
\le \Lambda (\nabla\times\mathbf H_{0m}\virg \nabla\times\mathbf H_{0m})_{X_\mu}\,.
\end{equation}
Thanks to \eqref{2.11}, \eqref{2.14}, and recalling \eqref{eigenfrion}, we obtain that
\begin{equation}
\label{peso-ex-12}
 (\nabla\times\mathbf H_{0m}\virg \nabla\times\mathbf H_{0m})_{X_\mu}
 = \sum_{i,j=1}^m (\mathbf H_0\virg \psi_i)_{X_\mu} (\mathbf H_0\virg \psi_j)_{X_\mu} (\nabla\times\psi_i\virg\nabla\times\psi_j)_{X_\mu}
=\sum_{i=1}^m \lambda_i|(\mathbf H_0\virg\psi_i)_{X_\mu}|^2\,.
\end{equation}
Since $\mathbf H_0\in Y_\mu$, by \eqref{eigenfri} we also have $\lambda_i(\mathbf H_0\virg\psi_i)_{X_\mu}=(\nabla\times\mathbf H_0\virg\nabla\times\psi_i)_{X_\mu}$. Hence
\begin{equation}
\label{peso-ex-13}
\sum_{i=1}^\infty \lambda_i|(\mathbf H_0\virg\psi_i)_{X_\mu}|^2 
=
\sum_{\lambda_i>0}\left|\big(\nabla\times\mathbf H_0\virg\lambda_i^{-\frac12}\nabla\times\psi_i\big)_{X_\mu}\right|^2 
\le\Lambda \|\nabla\times \mathbf H_0\|_{\Ld}^2\,,
\end{equation}
where in the last passage we also used Bessel's inequality and the fact that $(\lambda_i^{-1/2}\nabla\times\psi_i)$ is an orthonormal system in $X_\mu$, by \eqref{eigenfrion}. Clearly, \eqref{peso-ex-10}, \eqref{peso-ex-11}, \eqref{peso-ex-12},  and \eqref{peso-ex-13} imply \eqref{2.16} and this concludes the proof.
\end{proof}

\subsection{Energy estimates}
We provide ourselves with standard a priori bounds for the approximate solutions, so as to construct weak solutions by compactness.

\begin{prop}\label{prop:ee}
Let $\mathbf H_0\in X_\mu$ and let $(\mathbf E_m\virg\mathbf H_m)$ be as in Lemma~\ref{lm2.2}.
Then 
\begin{equation}
\label{teo1.2pre}
		\int_0^T \|  \mathbf E_m(t)\|_{\Ld}^2\, dt  + \sup_{t\in[0,T]} \|\mathbf H_m(t)\|_{\Ld}^2 
		\le C
		 \Big( \|\mathbf H_0\|_{\Ld}^2 +\int_0^T (\|\Je(t)\|_{\Ld}^2+\|\Jm(t)\|_{\Ld}^2)\,dt\Big)\,,
\end{equation}
for a constant $C>0$ depending on $\Lambda$, and $T$, only. If in addition $\mathbf H_0\in Y_\mu$ then
\begin{equation}
\label{teo1.2}
\begin{split}
	\sup_{t\in[0,T]} \| & \mathbf E_m(t)\|_{\Ld}^2\, + \sup_{t\in[0,T]} \|\mathbf H_m(t)\|_{\Ld}^2 + \int_0^T\|\partial_t \mathbf H_m(t)\|_{\Ld}^2\,dt\\
	& \le C \Big( \|\mathbf H_0\|_{\Hu}^2+ \|\Je(0)\|_{\Ld}^2 +\int_0^T (\|\Je(t)\|_{\Ld}^2+\|\Jm(t)\|_{\Ld}^2+\|\partial_t\Je(t)\|_{\Ld}^2)\,dt\Big)\,,
\end{split}
\end{equation}
for a (possibly different) constant $C>0$ depending on $\Lambda$, and $T$, only.
\end{prop}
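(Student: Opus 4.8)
The plan is to derive both bounds from energy identities obtained by testing the Galerkin equations \eqref{2.13i}--\eqref{2.13ii} against the approximate solutions and against $\partial_t\mathbf H_m$, and then to close with Gronwall's inequality.

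First I would prove \eqref{teo1.2pre}. Multiplying \eqref{2.13i} by $\mathrm e_{im}(t)$ and \eqref{2.13ii} by $\mathrm h_{im}(t)$, summing over $i=1,\dots,m$, and recalling the structure \eqref{2.12}, one gets
\[
	(\nabla\times\mathbf H_m\virg\mathbf E_m)_{\Ld} - (\sigma\mathbf E_m\virg\mathbf E_m)_{\Ld} = (\Je\virg\mathbf E_m)_{\Ld}\,,\qquad
	(\nabla\times\mathbf E_m\virg\mathbf H_m)_{\Ld} + (\mu\partial_t\mathbf H_m\virg\mathbf H_m)_{\Ld} = (\Jm\virg\mathbf H_m)_{\Ld}\,.
\]
Since $\mathbf H_m\in\Huz$, the Green-type formula \eqref{2.2} gives $(\nabla\times\mathbf H_m\virg\mathbf E_m)_{\Ld}=(\nabla\times\mathbf E_m\virg\mathbf H_m)_{\Ld}$; equating the two expressions for this common curl term eliminates it and, using $(\mu\partial_t\mathbf H_m\virg\mathbf H_m)_{\Ld}=\tfrac12\tfrac{d}{dt}\|\mathbf H_m\|_{X_\mu}^2$, leaves
\[
	\tfrac12\tfrac{d}{dt}\|\mathbf H_m\|_{X_\mu}^2 + (\sigma\mathbf E_m\virg\mathbf E_m)_{\Ld} = (\Jm\virg\mathbf H_m)_{\Ld} - (\Je\virg\mathbf E_m)_{\Ld}\,.
\]
By \eqref{1.2ii} the second term on the left dominates $\Lambda^{-1}\|\mathbf E_m\|_{\Ld}^2$; bounding the right-hand side with Young's inequality, absorbing a fraction of $\|\mathbf E_m\|_{\Ld}^2$, and using $\|\mathbf H_m\|_{\Ld}^2\le\Lambda\|\mathbf H_m\|_{X_\mu}^2$ from \eqref{1.2i}, Gronwall's lemma together with $\|\mathbf H_m(0)\|_{X_\mu}\le\|\mathbf H_0\|_{X_\mu}\le\sqrt\Lambda\,\|\mathbf H_0\|_{\Ld}$ (the projection \eqref{2.11} being $X_\mu$-orthogonal) yields \eqref{teo1.2pre} after integrating the dissipative term in time.

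For the sharper bound \eqref{teo1.2}, now assuming $\mathbf H_0\in Y_\mu$ and $\partial_t\Je\in\Ld$, I would instead test \eqref{2.13ii} against $\partial_t\mathbf H_m$, i.e.\ multiply by $\tfrac{d}{dt}\mathrm h_{im}$ and sum, obtaining
\[
	(\nabla\times\mathbf E_m\virg\partial_t\mathbf H_m)_{\Ld} + \|\partial_t\mathbf H_m\|_{X_\mu}^2 = (\Jm\virg\partial_t\mathbf H_m)_{\Ld}\,.
\]
The crux is to recognise the first term as a time derivative of the electric energy. Since $\partial_t\mathbf H_m\times n=0$, formula \eqref{2.2} gives $(\nabla\times\mathbf E_m\virg\partial_t\mathbf H_m)_{\Ld}=(\mathbf E_m\virg\nabla\times\partial_t\mathbf H_m)_{\Ld}$ with $\nabla\times\partial_t\mathbf H_m=\partial_t(\nabla\times\mathbf H_m)$; expanding $\mathbf E_m=\sum_k\mathrm e_{km}\varphi_k$ and differentiating in time the scalar identity $(\nabla\times\mathbf H_m\virg\varphi_k)_{\Ld}=(\sigma\mathbf E_m\virg\varphi_k)_{\Ld}+(\Je\virg\varphi_k)_{\Ld}$ from \eqref{2.13i} (legitimate for a.e.\ $t$ because $\partial_t\Je\in\Ld$, so each $\mathrm e_{km}\in H^1(0,T)$) yields, using that $\sigma$ is symmetric and time-independent,
\[
	(\nabla\times\mathbf E_m\virg\partial_t\mathbf H_m)_{\Ld} = \tfrac12\tfrac{d}{dt}(\sigma\mathbf E_m\virg\mathbf E_m)_{\Ld} + (\partial_t\Je\virg\mathbf E_m)_{\Ld}\,.
\]
Substituting gives the second energy identity
\[
	\tfrac12\tfrac{d}{dt}(\sigma\mathbf E_m\virg\mathbf E_m)_{\Ld} + \|\partial_t\mathbf H_m\|_{X_\mu}^2 = (\Jm\virg\partial_t\mathbf H_m)_{\Ld} - (\partial_t\Je\virg\mathbf E_m)_{\Ld}\,.
\]

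Finally I would close as before. Young's inequality lets me absorb a fraction of $\|\partial_t\mathbf H_m\|_{X_\mu}^2$ (which controls $\Lambda^{-1}\|\partial_t\mathbf H_m\|_{\Ld}^2$ by \eqref{1.2i}), while the term $(\partial_t\Je\virg\mathbf E_m)_{\Ld}$ together with the comparison $\|\mathbf E_m\|_{\Ld}^2\le\Lambda(\sigma\mathbf E_m\virg\mathbf E_m)_{\Ld}$ feeds a Gronwall loop for $y(t):=(\sigma\mathbf E_m(t)\virg\mathbf E_m(t))_{\Ld}$. The initial value $y(0)\le\Lambda\|\mathbf E_m(0)\|_{\Ld}^2$ is controlled by \eqref{2.16} in terms of $\|\nabla\times\mathbf H_0\|_{\Ld}$ and $\|\Je(0)\|_{\Ld}$, hence by $\|\mathbf H_0\|_{\Hu}$ and $\|\Je(0)\|_{\Ld}$; Gronwall then bounds $\sup_t\|\mathbf E_m\|_{\Ld}^2$ and, upon integrating, $\int_0^T\|\partial_t\mathbf H_m\|_{\Ld}^2\,dt$, while $\sup_t\|\mathbf H_m\|_{\Ld}^2$ is inherited from \eqref{teo1.2pre} since $\|\mathbf H_0\|_{\Ld}\le\|\mathbf H_0\|_{\Hu}$. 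The only genuine obstacle is the first step of this second computation: transferring the curl off $\mathbf E_m$ via \eqref{2.2} and using the first Galerkin equation to convert $\partial_t(\nabla\times\mathbf H_m)$ into $\sigma\partial_t\mathbf E_m$, which is exactly what produces the differentiable electric energy $\tfrac12\tfrac{d}{dt}(\sigma\mathbf E_m\virg\mathbf E_m)_{\Ld}$ and thereby upgrades the space-time $\Ld$ control of $\mathbf E_m$ to a uniform-in-time $\Ld$ bound.
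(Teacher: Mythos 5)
Your proof is correct and takes essentially the same approach as the paper's: the same cross-testing of the Galerkin equations with $(\mathbf E_m\virg\mathbf H_m)$ plus the Green formula \eqref{2.2} and Gr\"onwall for \eqref{teo1.2pre}, and the same second identity obtained by differentiating the Amp\`ere equation in time and testing the Faraday equation with $\partial_t\mathbf H_m$, closed via the initial bound \eqref{2.16}, for \eqref{teo1.2}. The only cosmetic difference is in the final step: you run a second Gr\"onwall loop on the electric energy $(\sigma\mathbf E_m\virg\mathbf E_m)_{\Ld}$, whereas the paper simply reuses the already-established bound on $\int_0^T\|\mathbf E_m\|_{\Ld}^2\,dt$ from the first step.
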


\begin{proof}
By \eqref{2.13}, for all $(\varphi,\psi)\in{\rm Span}\{\varphi_1,\ldots,\varphi_m\} \mathbin{\times}{\rm Span}\{\psi_1,\ldots,\psi_m\}$ we have
\begin{subequations}\label{peso-en-0}
\renewcommand{\theequation}{\theparentequation \roman{equation}}
\begin{align} 
		(\nabla\times\mathbf H_m\virg\varphi)_{\Ld}- (\sigma\mathbf E_m\virg\varphi)_{\Ld} &= (\Je\virg\varphi)_{\Ld} \label{3.12i} \\
		(\nabla\times\mathbf E_m\virg \psi)_{\Ld} + ( \mu\partial_t \mathbf H_m\virg \psi)_{\Ld} &= (\Jm\virg\psi)_{\Ld}\label{3.12ii}\,.
\end{align}
\end{subequations} 
We divide now the proof into two steps.
\vskip.2cm

\step{1}{Core Energy inequality}
We observe that
\[
(\mu \partial_t\mathbf H_m\virg\mathbf H_m)_{\Ld} = \frac{1}{2}\frac{d}{dt} (\mu\mathbf H_m\virg\mathbf H_m)_{\Ld}\,.
\]
Then, choosing $\varphi = \mathbf E_m$  in \eqref{3.12i} and $\psi = \mathbf H_m$ in \eqref{3.12ii}, integrating on $(0,t)$, and using \eqref{2.14}, we obtain the energy identity
\begin{equation}
\label{peso-en-1}
	\frac{1}{2}\|\mathbf H_m(t)\|_{X_\mu}^2 + \int_0^t(\sigma\mathbf E_m\virg\mathbf E_m)_{\Ld} =\frac{1}{2} \|\mathbf H_{0m}\|_{X_\mu}^2+\int_0^t (\Jm\virg \mathbf H_m)_{\Ld}
	-\int_0^t(\Je\virg\mathbf E_m)\,.
\end{equation}
By Cauchy Schwartz and Young inequality we have
\[
(\Jm\virg\mathbf H_m)_{\Ld} \le \frac{1}{2}\|\mathbf H_m\|_{X_\mu}^2 + \frac{1}{2}\|\mu^{-1}\Jm\|_{X_\mu}^2\,.
\]
Using Cauchy-Schwartz inequality for the scalar product $(\phi\virg\psi)\mapsto(\sigma \phi\virg\psi)_{\Ld}$ induced by the symmetric matrix $\sigma$ and then
using Young's inequality again, we also have
\[
(\Je\virg\mathbf E_m)_{\Ld} \le \frac{1}{2}(\sigma\mathbf E_m\virg \mathbf E_m)_{\Ld} + \frac{1}{2}(\sigma^{-1}\Je\virg \Je)_{\Ld}\,.
\]

Also,  $\|\mathbf H_{0m}\|_{X_\mu}\le \|\mathbf H_0\|_{X_\mu}$ by \eqref{2.11}. Using these inequalities
in \eqref{peso-en-1}, together with \eqref{1.2i}, we get
\begin{equation}
\label{peso-en-2}
	\|\mathbf H_m(t)\|_{X_\mu}^2 + \int_0^t(\sigma\mathbf E_m\virg\mathbf E_m)_{\Ld}ds 
	\le \|\mathbf H_{0}\|_{X_\mu}^2+\Lambda\!\int_0^t \!\!\big( \|\Je\|_{\Ld}^2+\|\Jm\|_{\Ld}^2 \big)ds
	+\int_0^t \|\mathbf H_m(s)\|_{X_\mu}^2ds\,.
\end{equation}
By \eqref{2.15}, $t\mapsto\|\mathbf H_m(t)\|^2$ is continuous. Thus, by Gr\"onwall's Lemma, \eqref{peso-en-2} implies the inequality
\begin{equation}
\label{peso-en-3}
\|\mathbf H_m(t)\|_{X_\mu}^2 + \int_0^t(\sigma\mathbf E_m\virg\mathbf E_m)_{\Ld}ds 
	\le C\Big[ \|\mathbf H_{0}\|_{X_\mu}^2+\Lambda\!\int_0^t \!\!\big( \|\Je\|_{\Ld}^2+\|\Jm\|_{\Ld}^2 \big)ds\Big]\,,
\end{equation}
where $C$ is a constant depending on $T$, only. Using \eqref{structure}, from \eqref{peso-en-3} we deduce  that
\begin{equation}
\label{peso-en-4}
	\|\mathbf H_m(t)\|_{\Ld}^2 + \int_0^t\|\mathbf E_m\|_{\Ld}^2ds 
	\le C\Big[ \|\mathbf H_{0}\|_{\Ld}^2+\int_0^t \!\!\big( \|\Je\|_{\Ld}^2+\|\Jm\|_{\Ld}^2 \big)ds\Big]\,,\quad \text{for all }t\in[0,T]\,.
\end{equation}
for an appropriate constant $C$, depending only on $\Lambda$ and $T$. This implies \eqref{teo1.2pre}.
\vskip.2cm

\step{2}{Estimate of $\partial_t\mathbf H_m$}
Differentiating in \eqref{3.12i} with respect to $t$ and taking $\varphi = \mathbf E_m$ in the resulting equation, we get
\begin{equation}
\label{peso-en-5}
	(\nabla\times\partial_t\mathbf H_m\virg\mathbf E_m)_{\Ld}-(\sigma\mathbf E_m\virg\partial_t \mathbf E_m)_{\Ld} = (\partial_t\Je\virg\mathbf E_m)_{\Ld}\,.
\end{equation}
Choosing $\psi = \partial_t\mathbf H_m$ in \eqref{3.12ii}, we obtain
\begin{equation}
\label{peso-en-6}
	(\nabla\times\mathbf E_m\virg\partial_t\mathbf H_m)_{\Ld} + (\mu\partial_t\mathbf H_m\virg\partial_t\mathbf H_m)_{\Ld} = (\Jm\virg\partial_t\mathbf H_m)_{\Ld}\,.
\end{equation}
Moreover,  $\partial_t\mathbf H_m $ takes values in $\Huz$. Hence, $	(\nabla\times\mathbf E_m\virg\partial_t\mathbf H_m)_{\Ld}=
	(\mathbf E_m\virg\nabla\times\partial_t\mathbf H_m)_{\Ld}$. Then, subtracting \eqref{peso-en-5} from \eqref{peso-en-6} 
	and integrating over $(0,t)$ we obtain
\begin{equation*}
	\int_0^t\!\!\|\partial_t\mathbf H_m\|_{X_\mu}^2+ \tfrac{1}{2}(\sigma\mathbf E_m(t)\virg\mathbf E_m(t))_{\Ld}
	=\tfrac{1}{2}(\sigma\mathbf E_m(0)\virg\mathbf E_m(0))_{\Ld}+\int_0^t\!\![ (\Jm\virg\partial_t\mathbf H_m)_{\Ld}-(\partial_t\Je\virg\mathbf E_m)_{\Ld}]\,.
\end{equation*}
By Cauchy-Schwartz and Young inequality,
\[
(\Jm\virg\partial_t\mathbf H_m)_{\Ld}\le \tfrac{1}{2}\|\partial_t\mathbf H_m\|_{X_\mu}^2 + \tfrac{1}{2}(\mu^{-1}\Jm\virg\Jm)_{\Ld}\,,\ \text{and }\
(\partial_t\Je\virg\mathbf E_m)_{\Ld}\le \tfrac{1}{2}\|\partial_t\Je\|_{\Ld}^2+\tfrac{1}{2}\|\mathbf E_m\|^2_{\Ld}\,.
\]
By these inequalities and \eqref{1.2ii}, the previous identity implies that for a.e.\ $t\in(0,T)$ the inequality
\[
\|\mathbf E_m(t)\|_{\Ld}^2+ \int_0^t\|\partial_t\mathbf H_m\|_{\Ld}^2 \le
C\Big[  \|\mathbf E_m(0)\|_{\Ld}^2 +
  \int_0^T\|\mathbf E_m\|^2+
   \int_0^T\big[\|\partial_t\Je\|_{\Ld}^2+\|\Jm\|_{\Ld}^2\big]\,\Big]\,,
\]
holds, with a constant $C$ depending only on $\Lambda$. Eventually, recalling \eqref{2.16}, from the last inequality and \eqref{peso-en-4} we
deduce \eqref{teo1.2}, as desired.
\end{proof}

\subsection{Proof of Theorem~\ref{teo1}}
We first assume that $\Je=\Jm=0$ for a.e.\ $t\in(0,T)$, and that $\mathbf H_0=0$. Then we test equation \eqref{weak-ampere} with $\varphi=\mathbf E$
and \eqref{weak-faraday} with $\psi=\mathbf H$. By \eqref{2.2} and by an integration in time we arrive at
\[
	(\mu\mathbf H(t)\virg\mathbf H(t))_{\Ld} + \int_0^t(\sigma\mathbf E(s)\virg\mathbf E(s))_{\Ld}\,ds=0\,, \qquad \text{for all }t\in[0,T].
\]
By \eqref{structure}, both the first summand and the integrand in the second one are positive quantities.
Then, $\mathbf H(t)=\mathbf E(t)=0$ for a.e.\ $t\in[0,T]$. By linearity this implies at once the uniqueness statement.

Now we prove the existence of solutions. for every $m\in\mathbb N$, let $(\mathbf E_m\virg \mathbf H_m)$ be as in Lemma~\ref{lm2.2}.
The energy estimate \eqref{teo1.2} of Proposition~\ref{prop:ee} implies that,
by possibly passing to a subsequence,
\begin{equation}
\label{teo1.3}
	\begin{split}
			\mathbf E_m\rightharpoonup \mathbf E & 
			\quad \text{weakly-$\ast$ in $L^\infty(0,T;X_\mu)$,}\\
		\mathbf H_m \rightharpoonup \mathbf H & \quad \text{weakly-$\ast$ in $L^\infty(0,T;X_\mu)$,}\\
		\mathbf \partial _t\mathbf H_m\rightharpoonup \partial_t\mathbf H & \quad \text{weakly in $L^2(0,T;X_\mu)$.}
	\end{split}
\end{equation}
Clearly \eqref{teo1.2} and \eqref{teo1.3} imply the estimate \eqref{teo1.1}. We are left to prove that the limit $(\mathbf E\virg\mathbf H)$ is a
weak solution of \eqref{1.10}.

For all functions $\varphi\in \Hu$ and $\psi\in Y_\mu$ that take the form
\begin{equation}
\label{teo1.4}
	\varphi(x,t) = \sum_{i=1}^N \alpha_i(t) \varphi_i(x)\,,\qquad \psi(x,t) = \sum_{i=1}^N \beta_i(t)\psi_i(x)
\end{equation}
for some $\alpha_i,\beta_i\in C^\infty([0,T])$ and $N\in\mathbb N$, by \eqref{2.2} and \eqref{2.13} for all $m\ge N$ we have
\begin{subequations}\label{teo1.35}
\begin{align}
		\int_0^T\int_\Omega
		\mathbf H_m\cdot\nabla\times\varphi\,dx \,dt- 
		\int_0^T\int_\Omega
		\sigma\mathbf E_m\cdot\varphi\,dx \,dt=
		 \int_0^T\int_\Omega
		 \Je\cdot\varphi\,dxdt \\
\label{weak-finite-faraday}	
\int_0^T\int_\Omega
\mathbf E_m\cdot\nabla\times \psi \,dx\, dt+ 
\int_0^T\int_\Omega
\mu \partial_t \mathbf H_m\cdot\psi \,dx\,dt  =
 \int_0^T \int_\Omega
\Jm\cdot\psi\,dx\,dt\,.
\end{align}
\end{subequations}
Owing to \eqref{teo1.3}, from \eqref{teo1.35} we infer that
\begin{equation}
\label{teo1.5}
		\begin{split}
		\int_0^T\int_\Omega\mathbf H\cdot\nabla\times\varphi\,dx \,dt- \int_0^T\int_\Omega\sigma\mathbf E\cdot\varphi\, dx \,dt=
		 \int_0^T\int_\Omega\Je\cdot\varphi\,dx\,dt\,, \\
	\int_0^T\int_\Omega\mathbf E\cdot\nabla\times \psi \,dx\, dt+ \int_0^T\int_\Omega\mu\partial_t \mathbf H\cdot\psi \,dx\,dt  =\int_0^T \int_\Omega\Jm\cdot\psi\,dx\,dt\,.
		\end{split}
\end{equation}

The pairs $(\varphi,\psi)$ of the form \eqref{teo1.4} form
a dense set in $L^2(0,T\mathbin{;}\Hu\times Y_\mu)$. Thus,  
from \eqref{teo1.5} we deduce that, for a.e.\ $t\in[0,T]$,
\eqref{weak-ampere} holds for all $\varphi\in \Hu$  and \eqref{weak-faraday} holds for all $\psi\in Y_\mu$.
In view of Remark~\ref{rmq:smallertest}, it follows that \eqref{weak-faraday} holds for all $\psi\in \Huz$.

For a.e.\ $t\in (0,T)$, \eqref{weak} holds for all $\varphi\in \Hu$ and for all $\psi\in\Huz$
and this implies that $(\mathbf E\virg\mathbf H)\in L^2(0,T\mathbin{;}\Hu\times\Huz)$. By \eqref{teo1.3}
we also have $\partial_t\mathbf H\in L^2(0,T\mathbin{;} \Ld(\Omega\mathbin{;}\mathbb R^3))$.

Then, according to Definition~\ref{def-weak} (see also Remark~\ref{EvansC}) we are left to prove that \eqref{weak0} holds. To do so,
we fix $\psi \in C^1([0,T]\mathbin{;}\Huz)$, with $\psi(T)=0$. By \eqref{weak-faraday}, we have
\begin{equation}
\label{teo1.9}
	\int_0^T\int_\Omega \mathbf E\cdot\nabla\times\psi\,dx\,dt  -\int_0^T \int_\Omega \mu\,\mathbf H\cdot\partial_t\psi\,dx\,dt = \int_0^T\int_\Omega\Jm\cdot\psi\,dx\,dt
	+\int_\Omega \mu\,\mathbf H(0)\cdot \psi(0)\,dx\,.
\end{equation}
Also, by \eqref{weak-finite-faraday} we have 
\begin{equation}
\label{teo1.10}
	\int_0^T\int_\Omega \mathbf E_m\cdot\nabla\times\psi\,dx\,dt  -\int_0^T \int_\Omega \mu\,\mathbf H_m\cdot\partial_t\psi\,dx\,dt = \int_0^T\int_\Omega\Jm_m\cdot\psi\,dx\,dt
	+\int_\Omega \mu\,\mathbf H_{0m}\cdot \psi(0)\,dx\,.
\end{equation}
By \eqref{teo1.3}, passing to weak limits in \eqref{teo1.10} and comparing with \eqref{teo1.9}
we get that 
\[
	(\mathbf H(0)\virg \psi(0))_{X_\mu} = (\mathbf H_0\virg\psi(0))_{X_\mu}\,.
\]
Since $\psi(0)$ can be any element of $Y_\mu$,  by Lemma~\ref{lm:density} we deduce \eqref{weak0} and this ends the proof.
\qed

\section{Global H\"older estimates for the Magnetic Field}\label{s:reg}

Given $\alpha\in(0,1]$, by $C^{0,\alpha}(\overline\Omega)$
we denote the space of all continuous functions $u$ that are $\alpha$-H\"older continuous on $\overline\Omega$, meaning that

\centerline{
\(
\displaystyle
	\|u\|_{C^{0,\alpha}(\overline\Omega)} := \|u\|_{L^\infty(\Omega)} + \sup_{\substack{x,y\in \overline\Omega\\ x\neq y}} \frac{|u(x)-u(y)|}{|x-y|^\alpha}<+\infty\,.
\)
}
We recall that $C^{0,\alpha}(\overline\Omega) $ is a Banach space with this norm. The previous definition extends obviously to the case of vector-valued, and tensor-valued functions.
\begin{thm}\label{teoreg2}
There exists $\alpha_0\in(0,\frac{1}{2}]$, only depending on $\Lambda$, such that
for every $\alpha\in(0,\alpha_0]$ the following holds:
for every $\mathbf H_0\in C^{0,\alpha}(\overline\Omega\mathbin{;}\mathbb R^3)$ and for every $\Je,\Jm\in L^2(0,T\mathbin{;}C^{0,\alpha}(\overline\Omega\mathbin{;}\mathbb R^3))$,
if $(\mathbf E\virg\mathbf H)$ is a weak solution of
\eqref{1.10}, then $\mathbf H\in L^2(0,T\mathbin{;}C^{0,\alpha}(\overline\Omega\mathbin{;}\mathbb R^3))$, and we have
\begin{equation}
\label{holdestH}
\begin{split}
\| \mathbf H  (t) \|_{C^{0,\alpha}(\overline\Omega\mathbin{;}\mathbb R^3)}  \le C\Big[ & 
\|\mu\mathbf H_0\|_{C^{0,\alpha}(\overline\Omega\mathbin{;}\mathbb R^3)}+
\|\mathbf E(t)\|_{\Ld}+\|\mathbf H\|_{\Ld}+\|\mu\partial_t\mathbf H(t)\|_{\Ld} \\
 & \qquad +\int_0^t\|\Jm(s)\|_{C^{0,\alpha}(\overline\Omega\mathbin{;}\mathbb R^3)}ds +\|\Jm(t)\|_{\Ld}+\|\Je(t)\|_{C^{0,\alpha}(\overline\Omega\mathbin{;}\mathbb R^3)}
\Big]\,,
\end{split}
\end{equation}
for a.e. $t\in(0,T)$, where the constant $C$ depends on $\Lambda$ and on $r$.
\end{thm}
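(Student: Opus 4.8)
The plan is to read \eqref{holdestH} as an a-priori elliptic estimate at a.e.\ frozen time $t$, and to obtain it by a De Giorgi--Nash--Moser-type iteration for the stationary problem solved by the magnetic field. First I would eliminate $\mathbf E$: since $\sigma\mathbf E=\nabla\times\mathbf H-\Je$ by the first line of \eqref{1.10}, Faraday's law turns into the second-order equation
\[
\nabla\times\!\big(\sigma^{-1}\nabla\times\mathbf H\big)=\Jm-\mu\partial_t\mathbf H+\nabla\times(\sigma^{-1}\Je)\virg
\]
to be read together with the constraint $\nabla\cdot(\mu\mathbf H)=0$ and the condition $\mathbf H\times n=0$ built into Definition~\ref{def-weak}. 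At fixed time this is a uniformly elliptic curl--curl system with $L^\infty$ coefficient $\sigma^{-1}$ (ellipticity governed by $\Lambda$), forced by the $\Ld$ datum $\mathbf R:=\Jm-\mu\partial_t\mathbf H$ and by the H\"older source $\Je$. In parallel I would record the time-integrated Faraday identity $\mu\mathbf H(t)=\mu\mathbf H_0+\int_0^t\Jm\,ds-\nabla\times\int_0^t\mathbf E\,ds$, which already displays the genuinely H\"older contributions $\mu\mathbf H_0$ and $\int_0^t\Jm$; these carry exactly the terms $\|\mu\mathbf H_0\|_{C^{0,\alpha}}$ and $\int_0^t\|\Jm\|_{C^{0,\alpha}}$ appearing in \eqref{holdestH}.

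The heart of the matter is an interior H\"older estimate for the homogeneous system $\nabla\times(\sigma^{-1}\nabla\times\mathbf H)=0$, $\nabla\cdot(\mu\mathbf H)=0$. Here I would run a Caccioppoli/Campanato argument: testing against $\zeta^2(\mathbf H-c)$ for cut-offs $\zeta$ and using $\nabla\cdot(\mu\mathbf H)=0$ to recover the full gradient from the curl through the Gaffney inequality of Lemma~\ref{lm:gaffney-aniso} (equivalently Lemma~\ref{mettidiv}), one gets a Caccioppoli inequality and then, by the De Giorgi iteration, a decay estimate $\int_{B_\rho}|\nabla\mathbf H|^2\lesssim\rho^{\,1+2\alpha_0}$ with $\alpha_0=\alpha_0(\Lambda)\in(0,1)$. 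The forcing is incorporated by comparison with the homogeneous solution: an $\Ld$ right-hand side $\mathbf R$ produces, via Poincar\'e on $B_\rho$, an additional growth $\int_{B_\rho}|\nabla\mathbf H|^2\lesssim\rho^{2}$, which is precisely what forces the ceiling $\alpha_0\le\tfrac12$ and supplies the terms $\|\mu\partial_t\mathbf H(t)\|_{\Ld}$ and $\|\Jm(t)\|_{\Ld}$; the H\"older source contributes $\|\Je(t)\|_{C^{0,\alpha}}$, while the zeroth-order terms of the iteration account for $\|\mathbf E(t)\|_{\Ld}$ (that is, $\|\nabla\times\mathbf H\|_{\Ld}$ modulo $\Je$) and $\|\mathbf H\|_{\Ld}$. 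Through the Morrey--Campanato characterisation of $C^{0,\alpha}$ this yields the interior bound for every $\alpha\le\alpha_0$. I should stress that the apparent tension between an $\Ld$ forcing and a H\"older conclusion is genuine but harmless: the curl of $\mathbf H$ is not an arbitrary $\Ld$ field, being constrained by the second-order equation above, so the usual Biot--Savart obstruction does not apply.

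It remains to reach the boundary, and this I expect to be the main obstacle. Using \eqref{hpOmega} I would flatten $\partial\Omega$ locally by a $C^{1,1}$ change of variables (available thanks to the uniform two-sided ball condition, with constants controlled by $r$), which transforms the system into one of the same type with a new $L^\infty$-elliptic coefficient. The condition $\mathbf H\times n=0$ is then handled by an even/odd reflection of the normal and tangential components across the flattened boundary, extending $\mathbf H$ to a full neighbourhood as a solution of a reflected elliptic system, so that the interior estimate applies. A finite cover of $\overline\Omega$ by such charts, together with the uniform geometry, produces the global estimate with constants depending only on $\Lambda$ and $r$; squaring \eqref{holdestH} and integrating in time then gives $\mathbf H\in L^2(0,T\mathbin{;}C^{0,\alpha}(\overline\Omega\mathbin{;}\mathbb R^3))$. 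The delicate points are the compatibility of the reflection with the possible discontinuity of $\sigma$ across the chart and the uniform dependence of the De Giorgi exponent on both $\Lambda$ and the geometry.
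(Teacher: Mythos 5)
The core of your plan --- a Caccioppoli inequality for the full system $\nabla\times(\sigma^{-1}\nabla\times\mathbf H)=0$, $\nabla\cdot(\mu\mathbf H)=0$, followed by ``the De Giorgi iteration'' --- is exactly the step that cannot work when $\sigma$ is merely bounded and measurable. De Giorgi's iteration is a scalar device: it relies on truncations at levels, which have no useful analogue for vector-valued solutions, and H\"older continuity of weak solutions of linear elliptic \emph{systems} with $L^\infty$ coefficients is false in general (De Giorgi's and Giusti--Miranda's counterexamples). Consequently no argument that uses only ellipticity and a Caccioppoli inequality can produce the decay $\int_{B_\rho}|\nabla\mathbf H|^2\lesssim \rho^{1+2\alpha_0}$; what a Caccioppoli inequality does give (via Gehring's lemma) is $L^{2+\epsilon}$ integrability of $\nabla\mathbf H$ with small $\epsilon$, i.e.\ a Morrey exponent far below the threshold $\lambda>1$ required by the Campanato embedding into $C^{0,\alpha}$. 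The special structure of the eddy current system must therefore be used to reduce matters to \emph{scalar} equations, and your outline never does this; the ``Biot--Savart obstruction'' you dismiss is not the relevant obstruction here.

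For comparison, this is how the paper proceeds at a.e.\ fixed $t$, with no iteration on the system. (i) It decomposes $\mathbf E=\nabla u+\eta$ (Lemma~\ref{HelmE}): Faraday's law \eqref{weak-faraday} bounds $\eta$ in $H^1\subset L^{2,2}$ by $\|\mathbf E\|_{\Ld}+\|\mu\partial_t\mathbf H\|_{\Ld}+\|\Jm\|_{\Ld}$, while testing Amp\`ere's law \eqref{weak-ampere} with $\varphi=\nabla w$ shows that $u$ solves the scalar equation $\nabla\cdot(\sigma\nabla u)=-\nabla\cdot(\sigma\eta+\Je)$; the scalar De Giorgi--Morrey estimate of \cite[Theorem 2.19]{T} applied to this equation is the only point where the rough coefficient $\sigma$ is confronted, and it is what produces the exponent $\bar\lambda\in(1,2]$, hence $\alpha_0=(\bar\lambda-1)/2$, depending only on $\Lambda$. (ii) It decomposes $\mathbf H=\nabla q+\zeta$ with $\nabla\cdot\zeta=0$ and $\zeta\times n=0$ (Remark~\ref{const:mu}); Alberti's div--curl estimate in Morrey spaces, $\|\nabla\zeta\|_{L^{2,\lambda}}\le C\|\nabla\times\zeta\|_{L^{2,\lambda}}$ (\cite[Lemma 6]{A}), together with $\nabla\times\zeta=\nabla\times\mathbf H=\sigma\mathbf E+\Je$ and a Poincar\'e/Campanato upgrade, gives $\zeta\in C^{0,\alpha}$: this is what replaces your system-level iteration, and it exploits precisely the divergence-free and boundary structure rather than truncation. (iii) The potential $q$ solves $\nabla\cdot(\mu\nabla q)=\nabla\cdot\big(\int_0^t\Jm\,ds+\mu\mathbf H_0-\mu\zeta\big)$ --- your time-integrated Faraday identity, tested with gradients --- whose coefficient $\mu$ is Lipschitz, so classical Schauder (not De Giorgi) estimates apply and yield exactly the terms $\|\mu\mathbf H_0\|_{C^{0,\alpha}(\overline\Omega\mathbin{;}\mathbb R^3)}$ and $\int_0^t\|\Jm(s)\|_{C^{0,\alpha}(\overline\Omega\mathbin{;}\mathbb R^3)}\,ds$ in \eqref{holdestH}. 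Note finally that all the estimates invoked are already global on $\Omega$, so the boundary flattening and even/odd reflection you propose --- which would indeed be delicate across interfaces where $\sigma$ jumps --- is not needed at all.
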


\subsection{Tools: Morrey and Campanato spaces}
For every $\lambda>0$, given $u\in L^2(\Omega)$ we say that $u$ belongs to Morrey's space $ L^{2,\lambda}(\Omega)$ if 
\[
	[u]_{L^{2,\lambda}(\Omega)}^2 := \sup_{\substack{x_0\in \Omega\\ \rho>0}} \rho^{-\lambda} \int_{B_\rho(x_0)\cap\Omega} |u|^2\,dx<+\infty\,.
\]
In this case we also write $\|u\|_{L^{2,\lambda}(\Omega)} = \|u\|_{L^2(\Omega;\mathbb R^3)}+[u]_{L^{2,\lambda}(\Omega)}$. We say that
$ u\in \mathcal{L}^{2,\lambda}(\Omega)$ if 
\[
	[u]_{\mathcal L^{2,\lambda}(\Omega)}^2 := \sup_{\substack{x_0\in \Omega\\ \rho>0}} \rho^{-\lambda} \int_{B_\rho(x_0)\cap\Omega} \left|u(x)-\frac{1}{|B_\rho(x_0)\cap\Omega|}\int_{B_\rho(x_0)\cap\Omega} u(y)\,dy\right|^2\,dx<+\infty\,,
\]
and in this case $\|u\|_{\mathcal L^{2,\lambda}(\Omega)} = \|u\|_{L^2(\Omega;\mathbb R^3)}+[u]_{\mathcal L^{2,\lambda}(\Omega)}$. For vector- and tensor-valued functions, Morrey's and Campanato's spaces
are defined similarly.

The space $\mathcal{L}^{2,\lambda}(\Omega)$ was introduced by Campanato in~\cite{C}. 
If for all $x_0\in \partial\Omega$ and for all $\rho>0$ we have\footnote{For example, this measure density requirement is met by all open set satisfying an interior cone condition.
In particular, clearly, it follows from assumption \eqref{hpOmega}.
} $|\Omega\cap B_\rho(x_0)|\ge K \rho^3$, with a constant $K$ depending only on $\Omega$, then
Campanato's space
is isomorphic to $L^{2,\lambda}(\Omega)$ for every $\lambda\in(0,3)$, to $C^{0,\frac{\lambda-3}{2}}(\overline\Omega)$ for every $\lambda\in(3,5]$. It can be seen that
it only consists of constant functions for every $\lambda>5$ and that it coincides with the space of BMO functions if $\lambda=3$, but this will be of no use in the sequel.

\subsection{Energy estimates}
In this section we provide some elementary a priori estimate for the eddy current sytstem.

\begin{lm}\label{teoenerg}
Let $\mathbf H_0\in X_\mu$, and let
 $(\mathbf E\virg\mathbf H)$ be a weak solution of \eqref{1.10} in the sense of Remark~\ref{veryweak}. Then
estimate \eqref{energyestapriori} holds  with a constant $C$ depending on $\mu$, $\Lambda$, and $T$, only.
\end{lm}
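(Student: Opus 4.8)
The plan is to reproduce, directly at the level of the weak solution, the energy computation carried out on the Galerkin approximations in Step~1 of the proof of Proposition~\ref{prop:ee}, the only genuinely new ingredient being the abstract chain rule of Proposition~\ref{prop:Teo3E}, which is forced on us because in this weaker setting $\partial_t\mathbf H$ takes values only in $Y_\mu'$. First I would record that, by the very notion of weak solution in the sense of Remark~\ref{veryweak} (cf.\ Remark~\ref{EvansC}), one has $\mathbf H\in L^2(0,T\mathbin{;}Y_\mu)$ and $\partial_t\mathbf H\in L^2(0,T\mathbin{;}Y_\mu')$; in particular $\mathbf H(t)\in Y_\mu$ is an admissible test field in \eqref{weak-faraday} for a.e.\ $t$, and Proposition~\ref{prop:Teo3E} is applicable to $\mathbf H$.

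Next I would test the Amp\`ere law \eqref{weak-ampere} with $\varphi=\mathbf E(t)$ and the Faraday law \eqref{weak-faraday} with $\psi=\mathbf H(t)$. Since $\mathbf H(t)\in\Huz$, the Green formula \eqref{2.2} makes the two curl terms $(\mathbf H\virg\nabla\times\mathbf E)_{\Ld}$ and $(\mathbf E\virg\nabla\times\mathbf H)_{\Ld}$ coincide, the boundary pairing vanishing because $\mathbf H\times n=0$. Eliminating the common curl term then cancels the cross contributions and yields, for a.e.\ $t$,
\[
	\langle\mu\partial_t\mathbf H(t)\virg\mathbf H(t)\rangle_{Y_\mu'\times Y_\mu} + (\sigma\mathbf E(t)\virg\mathbf E(t))_{\Ld} = (\Jm(t)\virg\mathbf H(t))_{\Ld}-(\Je(t)\virg\mathbf E(t))_{\Ld}\,.
\]
By Proposition~\ref{prop:Teo3E} the first term equals $\tfrac12\frac{d}{dt}\|\mathbf H(t)\|_{X_\mu}^2$ and $t\mapsto\|\mathbf H(t)\|_{X_\mu}^2$ is absolutely continuous, so integrating over $(0,t)$ and using $\mathbf H(0)=\mathbf H_0$ produces an energy identity identical in form to \eqref{peso-en-1}. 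Estimating its right-hand side by Cauchy--Schwarz and Young (using the $\sigma$-inner product to absorb $(\Je\virg\mathbf E)_{\Ld}$ into $(\sigma\mathbf E\virg\mathbf E)_{\Ld}$, exactly as in the passage from \eqref{peso-en-1} to \eqref{peso-en-2}), and then invoking \eqref{structure} together with Gr\"onwall's Lemma, delivers the bounds on $\sup_{t}\|\mathbf H(t)\|_{\Ld}^2$ and on $\int_0^T\|\mathbf E\|_{\Ld}^2\,dt$ that appear in \eqref{energyestapriori}.

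It remains to control $\partial_t\mathbf H$ in $L^2(0,T\mathbin{;}Y_\mu')$. For this I would return to \eqref{weak-faraday} with an arbitrary $\psi\in Y_\mu$ and isolate $\langle\mu\partial_t\mathbf H(t)\virg\psi\rangle = (\Jm(t)\virg\psi)_{\Ld}-(\mathbf E(t)\virg\nabla\times\psi)_{\Ld}$. Since \eqref{1.2i} gives $\|\psi\|_{\Ld}^2\le\Lambda\int_\Omega\mu|\psi|^2\le\Lambda\|\psi\|_{Y_\mu}^2$ and likewise $\|\nabla\times\psi\|_{\Ld}^2\le\Lambda\|\psi\|_{Y_\mu}^2$, taking the supremum over the unit ball of $Y_\mu$ yields the pointwise-in-time bound $\|\partial_t\mathbf H(t)\|_{Y_\mu'}\le\sqrt\Lambda\big(\|\Jm(t)\|_{\Ld}+\|\mathbf E(t)\|_{\Ld}\big)$. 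Squaring, integrating in time, and feeding in the bound on $\int_0^T\|\mathbf E\|_{\Ld}^2\,dt$ just obtained closes \eqref{energyestapriori}.

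The one delicate point is the first term of the energy identity: because $\partial_t\mathbf H$ lives only in $Y_\mu'$, the expression $\int_\Omega\mu\,\partial_t\mathbf H\cdot\mathbf H$ has no literal meaning and one cannot differentiate $\|\mathbf H(t)\|_{X_\mu}^2$ by hand. This is exactly what Proposition~\ref{prop:Teo3E} is designed to supply, once the two memberships $\mathbf H\in L^2(0,T\mathbin{;}Y_\mu)$ and $\partial_t\mathbf H\in L^2(0,T\mathbin{;}Y_\mu')$ and the admissibility of $\mathbf H(t)$ as a test field are in place; after that everything reduces to the same Cauchy--Schwarz/Young/Gr\"onwall bookkeeping as in Proposition~\ref{prop:ee}. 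One should also keep the weight consistent throughout, interpreting $\partial_t\mathbf H\in Y_\mu'$ through the functional $\psi\mapsto\langle\mu\partial_t\mathbf H\virg\psi\rangle$, so that its $Y_\mu'$-norm is precisely the one occurring in \eqref{energyestapriori}.
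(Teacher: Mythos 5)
Your proposal is correct and follows essentially the same route as the paper's proof: test \eqref{weak-ampere} with $\mathbf E$ and \eqref{weak-faraday} with $\mathbf H$, cancel the curl terms via \eqref{2.2}, use the chain rule of Proposition~\ref{prop:Teo3E} to read the duality term as $\tfrac12\frac{d}{dt}\|\mathbf H(t)\|_{X_\mu}^2$, then conclude by Young's inequality and Gr\"onwall's Lemma. Your closing duality argument bounding $\int_0^T\|\partial_t\mathbf H(t)\|_{Y_\mu'}^2\,dt$ by $\|\Jm\|_{\Ld}$ and $\|\mathbf E\|_{\Ld}$ addresses a term of \eqref{energyestapriori} that the paper's proof passes over in silence, so it is a correct completion of the same argument rather than a departure from it.
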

\begin{proof} Let $t\in(0,T)$ be such that
\eqref{weak} holds for all $(\varphi\virg\psi)\in\Hu\times\Huz$. Inserting $\varphi = \mathbf E$ in \eqref{weak-ampere} and $\psi = \mathbf H$ in \eqref{weak-faraday} and using \eqref{2.2} we obtain
\[
\int_\Omega \mu\partial_t\mathbf H\cdot\mathbf H\,dx +\int_\Omega \sigma \mathbf E\cdot\mathbf E\,dx= \int_\Omega \Jm\cdot \mathbf H\,dx
-\int_\Omega \Je\cdot \mathbf E\,dx\,.
\]
Using \eqref{1.2ii} to estimate from below the left hand-side, and Young inequality to estimate from above the right hand-side,
we obtain, for all given $\delta\in(0,1)$, that
\[
	\frac{1}{2}\frac{d}{dt}\int_\Omega \mu |\mathbf H|^2\,dx + \frac{1}{\Lambda}\int_\Omega |\mathbf E|^2\,dx \le 
	\frac{1}{2}\int_\Omega |\mathbf \Jm|^2\,dx +\frac{1}{2}\int_\Omega |\mathbf H|^2\,dx
	+\frac{\delta}{\Lambda}\int_\Omega |\mathbf E|^2\,dx + \frac{\Lambda}{4\delta} \int_\Omega |\Je|^2\,dx\,.
\]
Choosing $\delta = 1/2$ we absorb a term in the left hand-side. Then an integration gives
\begin{equation}
\label{pregro}
\begin{split}
 \int_\Omega & |\mathbf H(t)|^2\,dx  -\int_\Omega |\mathbf H_0|^2\,dx + \int_0^t\int_\Omega|\mathbf E|^2\,dx\,ds
 \\
 & 
 \le \Lambda^2 \Big[
 	\int_0^t\int_\Omega |\mathbf H|^2\,dx\,ds +  	\int_0^t\int_\Omega |\Jm|^2\,dx\,ds + 	\int_0^t\int_\Omega |\Je|^2\,dx\,ds 
 \Big]\,.
\end{split}
\end{equation}
By definition of weak solution (see Definition~\ref{def-weak} and Remark~\ref{EvansC}),
 $\mathbf H\in L^2(0,T\mathbin{;}Y_\mu) $ and $\partial_t\mathbf H\in L^2(0,T\mathbin{;}Y_\mu')$. In view of Proposition~\ref{prop:Teo3E},we have $\mathbf H\in C([0,T]\mathbin{;}\Ld)$, and the function
\[
	t\longmapsto \int_\Omega |\mathbf H(t) |^2\,dx\,,
\]
appearing in \eqref{pregro}, is absolutely continuous. Then, applying Gr\"onwall's Lemma, we obtain that
\[
\begin{split}
 \int_\Omega|\mathbf H(t)|^2\,dx & -\int_\Omega |\mathbf H_0|^2\,dx + \int_0^t\int_\Omega|\mathbf E|^2\,dx\,ds \le C \Big[	\int_0^T\int_\Omega |\Jm|^2\,dx\,ds + 	\int_0^T\int_\Omega |\Je|^2\,dx\,ds 
 \Big]
\end{split}
\]
for a suitable constant $C>0$, depending on $\mu$, $\Lambda$, and $T$, only. Since this procedure can be repeated for a.e.\ $t\in(0,T)$, we deduce \eqref{energyestapriori}.
\end{proof}
\begin{thm}\label{regapriori2}
Let $\mathbf H_0\in Y_\mu$, 
let $\Je\in L^2(0,T\mathbin{;}L^2(\Omega\mathbin{;}\mathbb R^3))$, with
$\partial_t\Je \in L^2(0,T\mathbin{;}L^2(\Omega\mathbin{;}\mathbb R^3))$, let
$\Jm\in L^2(0,T\mathbin{;}X)$, and let
 $(\mathbf E\virg\mathbf H)$ be a weak solution of 
\eqref{1.10} in the sense of Definition~\ref{def-weak}. Then
\[
\begin{split}
\sup_{t\in[0,T]} \|\mathbf E(t)\|_{\Ld}^2+\sup_{t\in[0,T]} \|\mathbf H(t)\|_{\Ld}^2  & +\int_0^T \|\partial_t\mathbf H(t)\|_{\Ld}^2\,dt
\le C \Big[\|\mathbf H_0\|_{\Ld}^2
  \\
 & \qquad+
\int_0^T \big(
	\|\Je(t)\|_{\Ld}^2+\|\Jm(t)\|_{\Ld}^2+\|\partial_t\Je(t)\|_{\Ld}^2
\big)\,dt
\Big]
\end{split}
\]
where the constant
$C$ depends on $\Lambda$ and $T$, only.
\end{thm}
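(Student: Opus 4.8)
The plan is to treat Theorem~\ref{regapriori2} as an \emph{a priori} estimate for the weak solution itself (which exists and is unique by Theorem~\ref{teo1}), and to reproduce it by running the energy method of Proposition~\ref{prop:ee} \emph{directly} on $(\mathbf E\virg\mathbf H)$ rather than on the Galerkin approximants, since passing \eqref{teo1.2} to the limit would only deliver the right-hand side of \eqref{teo1.1}. Because $\mathbf H\in L^2(0,T\mathbin{;}Y_\mu)$ and $\partial_t\mathbf H\in L^2(0,T\mathbin{;}X_\mu)$ by Remark~\ref{EvansC}, Proposition~\ref{prop:Teo3E} legitimises both the use of $\varphi=\mathbf E$, $\psi=\mathbf H$ and $\psi=\partial_t\mathbf H$ as test fields in \eqref{weak-ampere}--\eqref{weak-faraday} and the integration by parts in time, including $\tfrac12\tfrac{d}{dt}\int_\Omega\sigma\mathbf E\cdot\mathbf E\,dx=\langle\sigma\partial_t\mathbf E\virg\mathbf E\rangle$. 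All constants will depend only on $\Lambda,T$ through \eqref{structure}.

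First I would establish the core energy bound. Testing \eqref{weak-ampere} with $\varphi=\mathbf E$ and \eqref{weak-faraday} with $\psi=\mathbf H$, subtracting, and using the Green formula \eqref{2.2} to cancel the two curl pairings, yields
\[
\tfrac12\tfrac{d}{dt}\int_\Omega\mu|\mathbf H|^2\,dx+\int_\Omega\sigma\mathbf E\cdot\mathbf E\,dx=\int_\Omega\Jm\cdot\mathbf H\,dx-\int_\Omega\Je\cdot\mathbf E\,dx\,.
\]
Young's inequality (absorbing $\tfrac12(\sigma\mathbf E\virg\mathbf E)_{\Ld}$ on the left), \eqref{structure}, and Gr\"onwall's Lemma then give, exactly as in \eqref{energyestapriori},
\[
\sup_{t\in[0,T]}\|\mathbf H(t)\|_{\Ld}^2+\int_0^T\|\mathbf E(t)\|_{\Ld}^2\,dt\le C\Big(\|\mathbf H_0\|_{\Ld}^2+\int_0^T(\|\Je\|_{\Ld}^2+\|\Jm\|_{\Ld}^2)\,dt\Big)\,;
\]
crucially, this step already sees $\mathbf H_0$ only through its $\Ld$-norm.

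Next I would estimate $\partial_t\mathbf H$. Differentiating \eqref{weak-ampere} in time (licit since $\partial_t\Je\in L^2$) gives $\nabla\times\partial_t\mathbf H-\sigma\partial_t\mathbf E=\partial_t\Je$; testing this with $\mathbf E$, testing \eqref{weak-faraday} with $\psi=\partial_t\mathbf H\in\Huz$, subtracting, using \eqref{2.2}, and integrating over $(0,t)$ produces
\[
\int_0^t\|\partial_t\mathbf H\|_{X_\mu}^2\,ds+\tfrac12\int_\Omega\sigma\mathbf E(t)\cdot\mathbf E(t)\,dx=\tfrac12\int_\Omega\sigma\mathbf E(0)\cdot\mathbf E(0)\,dx+\int_0^t\big[(\Jm\virg\partial_t\mathbf H)_{\Ld}-(\partial_t\Je\virg\mathbf E)_{\Ld}\big]\,ds\,.
\]
After Young's inequality and insertion of the Step~1 bound on $\int_0^T\|\mathbf E\|_{\Ld}^2$, this controls $\sup_t\|\mathbf E\|_{\Ld}^2+\int_0^T\|\partial_t\mathbf H\|_{\Ld}^2$ by the desired source terms \emph{plus the initial electric energy} $\|\mathbf E(0)\|_{\Ld}^2$.

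The main obstacle is therefore the control of $\|\mathbf E(0)\|_{\Ld}^2$ by the data. Evaluating \eqref{weak-ampere} at $t=0$ gives $\sigma\mathbf E(0)=\nabla\times\mathbf H_0-\Je(0)$ (this is exactly where $\mathbf H_0\in Y_\mu\subset H^1$ is used), whence $\|\mathbf E(0)\|_{\Ld}\le\Lambda(\|\nabla\times\mathbf H_0\|_{\Ld}+\|\Je(0)\|_{\Ld})$, which is estimate \eqref{2.16}. This is the delicate point: the natural bound for the initial electric energy costs the full $\Hu$-norm of $\mathbf H_0$ together with $\|\Je(0)\|_{\Ld}$, so the crux of reaching the stated right-hand side, in which $\mathbf H_0$ enters only through $\|\mathbf H_0\|_{\Ld}$, lies entirely in absorbing or dispensing with this initial term. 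I expect this to be where the argument must be run most carefully, since among the three quantities on the left the bound on $\int_0^T\|\partial_t\mathbf H\|_{\Ld}^2$ is the one genuinely sensitive to the regularity of $\mathbf H_0$.
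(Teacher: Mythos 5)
Your Steps 1 and 2 coincide with the paper's proof: Step 1 is Lemma~\ref{teoenerg} (the core bound \eqref{energyestapriori}, which indeed costs only $\|\mathbf H_0\|_{\Ld}$), and Step 2 --- the time-differentiated Amp\`ere identity \eqref{d-amp-d-t} tested with $\mathbf E$, Faraday \eqref{weak-faraday} tested with $\partial_t\mathbf H$, then \eqref{2.2}, Young, and Gr\"onwall --- reproduces the paper's \eqref{conE0} verbatim, including the justification via Proposition~\ref{prop:Teo3E}. The genuine gap is Step 3. Bounding $\|\mathbf E(0)\|_{\Ld}$ through $\sigma\mathbf E(0)=\nabla\times\mathbf H_0-\Je(0)$, i.e.\ through \eqref{2.16}, proves estimate \eqref{teo1.1} of Theorem~\ref{teo1}, whose right-hand side carries $\|\mathbf H_0\|_{\Hu}^2+\|\Je(0)\|_{\Ld}^2$; but the entire content of Theorem~\ref{regapriori2}, relative to \eqref{teo1.1}, is precisely that these two terms are replaced by $\|\mathbf H_0\|_{\Ld}^2$ alone. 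You correctly flag this as the crux and then stop, so the proposal does not prove the stated theorem.

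The mechanism you are missing is that the paper never evaluates the Amp\`ere law at $t=0$: it dominates $\|\mathbf E(0)\|_{\Ld}$ by $\sup_t\|\mathbf E(t)\|_{\Ld}$ using the second part of Proposition~\ref{prop:Teo3E} applied to $\mathbf E$ itself. Faraday's law gives $\|\nabla\times\mathbf E\|_{\Ld}\le\|\mu\partial_t\mathbf H\|_{\Ld}+\|\Jm\|_{\Ld}$, so $\mathbf E\in L^2(0,T\mathbin{;}\Hu)$ quantitatively, while the differentiated Amp\`ere law yields \eqref{Etlim}, in effect $\|\sigma\partial_t\mathbf E(t)\|_{(\Hu)'}\le\|\partial_t\mathbf H(t)\|_{\Ld}+\|\partial_t\Je(t)\|_{\Ld}$; this produces \eqref{supE}, a bound on $\sup_t\|\mathbf E(t)\|_{\Ld}^2$ by time-integrated norms only, with $\int_0^T\|\mathbf E\|_{\Ld}^2$ already paid for by your Step 1 at the price $\|\mathbf H_0\|_{\Ld}^2$ --- this is how $\|\nabla\times\mathbf H_0\|_{\Ld}$ and $\|\Je(0)\|_{\Ld}$ are avoided. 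That said, your instinct that this term is hard to dispense with is vindicated by the paper itself: when \eqref{supE} is inserted into \eqref{conE0}, the quantity $\int_0^T\|\partial_t\mathbf H\|_{\Ld}^2$ reappears at full strength on the right-hand side (see the final display of the paper's proof), and no small constant is available to absorb it into the left; moreover, since $\mathbf E\in C([0,T]\mathbin{;}\Ld)$ with $\sigma\mathbf E(0)=\nabla\times\mathbf H_0-\Je(0)$, a bound on $\sup_t\|\mathbf E(t)\|_{\Ld}$ by $\|\mathbf H_0\|_{\Ld}$ alone is itself suspect. So your proposal is incomplete for the statement as given, the missing ingredient is Proposition~\ref{prop:Teo3E} combined with \eqref{Etlim} and \eqref{supE}, and the precise point at which you stopped is also where the paper's own written argument fails to close.
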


\begin{proof}
Let $\varphi\in \Hu$. Differentiating with respect to $t$ in \eqref{weak-ampere} we obtain
\begin{equation}
\label{d-amp-d-t}
\int_\Omega \partial_t\mathbf H\cdot\nabla\times \varphi\,dx-	\langle \sigma\partial_t\mathbf E\virg \varphi\rangle =\int_\Omega \partial_t\Je\cdot\varphi\,dx\,,
\end{equation}
where $\langle\cdot\virg\cdot\rangle$ stands for the pairing between $\Hu$ and its dual space. Since, in \eqref{d-amp-d-t}, $\varphi$ is arbitrary, by \eqref{1.2ii} and by a density argument we deduce that 
that
\begin{equation}
\label{Etlim}
\int_0^T\!\!	\langle\sigma\partial_t\mathbf E\virg\,v\rangle
\le 
 \Big[ \int_0^T \|\partial_t\mathbf H\|_{(\Hu)'}^2+ \int_0^T \|\partial_t\Je\|_{(\Hu)'}^2\Big]^\frac{1}{2}
\|v\|_{L^2(0,T\mathbin{;}\Hu)}\,,
\end{equation}
for all $v\in L^2(0,T\mathbin{;}\Hu)$.
Then, as a function taking values in the dual space of $\Hu$,  $\partial_t\mathbf E$
is $\Ld$ on the interval $(0,T)$.
In view of Proposition~\ref{prop:Teo3E}, this gives
$\mathbf E\in C([0,T]\mathbin{;}L^2(\Omega;\mathbf R^3))$ and 
\begin{equation}
\label{anche-d-t-E}
	\frac{d}{dt} \int_\Omega\sigma\mathbf E(t)\cdot\mathbf E(t)\,dt= 2\langle\sigma \partial_t\mathbf E\virg\mathbf E\rangle\,,\quad \text{for a.e.}\ t\in(0,T),
\end{equation}
where $\langle\cdot\virg\cdot\rangle$ denotes the duality pairing between $\Hu$ and its dual space. Now we take $\varphi  = \mathbf E$ in \eqref{d-amp-d-t}, which we can do for a.e.\ $t\in(0,T)$. As a result, by \eqref{anche-d-t-E} we get
\[
\int_\Omega \partial_t \mathbf H \cdot \nabla\times\mathbf E\,dx  - \frac{1}{2}\frac{d}{dt}\int_\Omega \sigma\mathbf E\cdot\mathbf E\,dx = \int_\Omega \partial_t\Je\cdot\mathbf E\,dx\,.
\]
Also, for a.e.\ $t\in(0,T)$ we can test \eqref{weak-faraday} with $\psi  = \partial_t\mathbf H$, and doing so we get
\[
	\int_\Omega \mathbf E\cdot\nabla\times\partial_t\mathbf H\,dx + \int_\Omega \mu\,\partial_t\mathbf H\cdot\partial_t\mathbf H\,dx = \int_\Omega \Jm\cdot\partial_t\mathbf H\,dx\,.
\]
We observe that \eqref{2.2} implies
\[
	\int_\Omega \mathbf E\cdot\nabla\times\partial_t\mathbf H \,dx = \int_\Omega \partial_t \mathbf H \cdot \nabla\times\mathbf E\,dx  \,.
\]
Combining the last three identities we get
\[
\int_\Omega \mu\,\partial_t\mathbf H\cdot\partial_t\mathbf H\,dx +  \frac{1}{2}\frac{d}{dt}\int_\Omega \sigma\mathbf E\cdot\mathbf E\,dx
= -  \int_\Omega \partial_t\Je\cdot\mathbf E\,dx+\int_\Omega \Jm\cdot\partial_t\mathbf H\,dx\,.
\]
Integrating this energy identity over the interval $[0,t]$, using \eqref{structure} and Young's inequality we obtain
\[
\int_0^t\|\partial_t\mathbf H\|_{\Ld}^2 +  \|\mathbf E(t)\|^2_{\Ld}\le C\Big[ \|\mathbf E(0)\|^2_{\Ld}+\int_0^t\|\mathbf E\|^2+ \int_0^t\big(\|\partial_t\Je\|^2+\|\Jm\|^2\big)\Big]
\]
for a suitable C depending only on $\Lambda$. By Gr\"onwall's Lemma, we deduce that
\begin{equation}
\label{conE0}
\int_0^t\|\partial_t\mathbf H\|_{\Ld}^2 +  \|\mathbf E(t)\|^2_{\Ld}\le C\Big[ \|\mathbf E(0)\|^2_{\Ld}+ \int_0^t\big(\|\partial_t\Je\|^2+\|\Jm\|^2\big)\Big]\,,
\end{equation}
where the constant depends now on $\Lambda$ and $T$, only.

In order to get rid of the term depending on $\mathbf E(0)$ in the right hand-side of \eqref{conE0}, we note that by Proposition~\ref{prop:Teo3E} we also have
\[
\sup_{t\in[0,T]}\|\mathbf E(t) \|_{\Ld}^2 \le C \Big[ \int_0^T\|\mathbf E\|_{\Hu}^2 + \int_0^T \|\partial_t\mathbf E\|_{(\Hu)'}^2\Big]\,,
\]
with a constant depending only on $\Lambda$, and $T$. We also recall that by
\eqref{weak-faraday} we have
\[
\|\mathbf E\|_{\Hu}^2  = \| \mathbf E\|_{\Ld}^2
+\|\nabla\times\mathbf E\|_{\Ld}^2 \le 
	\|\mathbf E\|_{\Ld}^2+\|\mu\partial_t\mathbf H\|_{\Ld}^2+\|\Jm\|^2_{\Ld}\,,
\]
whereas \eqref{Etlim} implies
\[
\int_0^T \|\partial_t\mathbf E\|_{(\Hu)'}^2\le \int_0^T \|\partial_t\mathbf H\|^2+\|\partial_t\Je\|^2\,.
\]
Then, by Gr\"onwall Lemma it follows that
\begin{equation}
\label{supE}
	\sup_{t\in[0,T]} \|\mathbf E(t)\|_{\Ld}^2 \le C\Big[ 
		\int_0^T\|\partial_t\mathbf H\|_{\Ld}^2 + \|\partial_t\Je\|_{\Ld}^2+\|\Jm\|_{\Ld}^2
	\Big]\,,
\end{equation}
where $C$ depends on $\Lambda$ and $T$, only.

Inserting \eqref{supE} in \eqref{conE0} we arrive at
\[
\int_0^t\|\partial_t\mathbf H\|_{\Ld}^2 +  \|\mathbf E(t)\|^2_{\Ld}\le C
\int_0^T\big(\|\partial_t\mathbf H\|_{\Ld}^2 + \|\partial_t\Je\|_{\Ld}^2+\|\Jm\|_{\Ld}^2\big)\,,
\]
for a.e.\ $0\le t\le T$.
\end{proof}

\subsection{Proof of Theorem~\ref{teoreg2}} 
We set $$\mathcal{I}=\{t\in[0,T]\colon \mathbf E(t)\in \Hu\,,\ \mathbf H(t)\in Y_\mu\,, \ \Je(t)\,, \Jm(t)\in C^{0,\alpha}(\overline\Omega\mathbin{;}\mathbb R^3)\} $$ and we recall that $[0,T]\setminus \mathcal{I} $ is a negligible set (see Remark~\ref{EvansC}).
We drop the dependance on $t$ of the vector fields, so as to abbreviate the notations.

By Lemma~\ref{HelmE},  there exist $u\in H^1(\Omega) $ and $\eta \in H^1(\Omega\mathbin{;}\mathbb R^3)$ with
\begin{subequations}\label{HE}
\begin{align}
\label{HE1}
	& \mathbf E = \nabla u + \eta\\
\label{HE2}
	& \|\nabla\eta\|_{\Ld} = \|\nabla \times \mathbf E\|_{\Ld}\\
\label{HE3}
	& \max\left\{ \|\nabla u \|_{\Ld} \virg \| \eta\|_{\Ld} \right\} \le \|\mathbf E\|_{\Ld}\,.
\end{align}
\end{subequations}
Recalling equation \eqref{weak-faraday}, from \eqref{HE2} and \eqref{HE3} we deduce
\begin{equation}
\label{REG1}
	\| \eta\|_{H^1(\Omega;\mathbb R^3)} \le \|\mathbf E\|_{\Ld}  + \|\mu\partial_t\mathbf H\|_{\Ld} + \|\Jm\|_{\Ld} \,.
\end{equation}

By Sobolev embedding Theorem, the inclusion of $H^1(\Omega\mathbin{;}\mathbb R^3)$ into $ L^6(\Omega;\mathbb R^3) $ is continuous, and so is
the embedding of $L^6(\Omega;\mathbb R^3) $ into Morrey's space $L^{2,2}(\Omega;\mathbb R^3)$, thanks to H\"older inequality. Thus, 
\(
	\|\eta\|_{L^{2,2}(\Omega;\mathbb R^3) }\le C	\|\eta\|_{H^1(\Omega\mathbin{;}\mathbb R^3)}
\)
for a constant $C>0$ that depends on $r$, only.
Hence, by \eqref{REG1} we get
\begin{equation}
\label{REG2}
	\|\eta\|_{L^{2,2}(\Omega;\mathbb R^3) } \le C\Big[\|\mathbf E\|_{\Ld}  + \|\mu\partial_t\mathbf H\|_{\Ld} + \|\Jm\|_{\Ld} \Big]\,.
\end{equation}

Next, we pick $w\in H^1(\Omega)$ and we test equation \eqref{weak-ampere} with $\varphi = \nabla w$. By \eqref{HE1}, we obtain
\[
\int_\Omega\sigma\nabla u\cdot\nabla w\,dx = -\int_\Omega(\sigma\eta+\Je)\cdot \nabla w\,dx\,.
\]
By~\cite[Theorem 2.19]{T} with $\Gamma=\partial\Omega$ (see also Lemma 2.18 therein),
there exists $\bar\lambda\in(1,2] $, depending only on $\Lambda$, such that for all $\lambda\in(1,\bar	\lambda]$ we have
\begin{equation*}
	\|\nabla u \|_{L^{2,\lambda}(\Omega\mathbin{;}\mathbb R^3)} \le C \Big[ \|\nabla u\|_{\Ld} + \|\sigma\eta+\Je\|_{L^{2,\lambda}(\Omega\mathbin{;}\mathbb R^3)} \Big]\,,
\end{equation*}
for a suitable $C>0$, depending on $\Lambda$ and on $r$, only. By \eqref{1.2ii} and \eqref{HE3}, the latter implies
\begin{equation}
\label{REG3}
	\|\nabla u \|_{L^{2,\lambda}(\Omega\mathbin{;}\mathbb R^3)} \le C \Big[ \|\mathbf E\|_{\Ld} + \|\eta\|_{L^{2,\lambda}(\Omega\mathbin{;}\mathbb R^3)} + \|\Je\|_{L^{2,\lambda}(\Omega\mathbin{;}\mathbb R^3)} \Big]\,.
\end{equation}

Fix $\lambda\in(1,\bar\lambda]$. By \eqref{HE1}, \eqref{REG2}, and \eqref{REG3}, there exists $C>0$, depending only on $\Lambda$ and  $r$, with
\begin{equation}
\label{REG5}
	\|\mathbf E\|_{L^{2,\lambda}(\Omega)}\le C \Big[\|\mathbf E\|_{\Ld}+\|\mu\partial_t\mathbf H\|_{\Ld}+\|\Jm\|_{\Ld}+\|\Je\|_{L^{2,\lambda}(\Omega\mathbin{;}\mathbb R^3)} \Big]\,.
\end{equation}

We recall that $\mathbf H\in Y_\mu$. By Lemma~\ref{mettidiv}, this gives $\mathbf H\in H^1(\Omega\mathbin{;}\mathbb R^3)$.
In view of Remark~\ref{const:mu},
there exist $q\in H^1_0(\Omega)$,
and $\zeta\in \Huz$, with
\begin{equation}
\label{REGh0'}
\int_\Omega\zeta\cdot\nabla v\,dx = 0 \,,\qquad \text{for all }v\in H^1_0(\Omega)\,,
\end{equation}
such that  $\mathbf H = \nabla q + \zeta$ and
\begin{equation}
\label{REGh1'}
		\max\Big\{ \|\nabla q\|_{\Ld}\virg \| \zeta\|_{\Ld}\Big\} \le \|\mathbf H\|_{\Ld}\,.
\end{equation}
Then, by~\cite[Lemma 6]{A}, for a constant $C$ depending only on $\Lambda,r$ we have
\[
	\|\nabla\zeta\|_{L^{2,\lambda}(\Omega\mathbin{;}\mathbb R^{3\times3})} \le C
			\|\nabla\times\zeta\|_{L^{2,\lambda}(\Omega\mathbin{;}\mathbb R^3)}\,. 
\]
Thus, recalling  that $\nabla\times \zeta = \nabla \times\mathbf H$ and using equation \eqref{weak-ampere},  we arrive at
\begin{equation}
\label{REGh4'}
	\|\nabla\zeta\|_{L^{2,\lambda}(\Omega\mathbin{;}\mathbb R^{3\times3})} \le C\Big[ 
			\|\mathbf E\|_{L^{2,\lambda}(\Omega\mathbin{;}\mathbb R^3)} + \|\Je \|_{L^{2,\lambda}(\Omega\mathbin{;}\mathbb R^3)}
	\Big]
\end{equation}
where $C$ depends on $\Lambda,r$, only.

We note that, by \eqref{hpOmega}, there exists $\overline\rho_0>0$, depending only on $r$, such that
if $0<\rho<\overline\rho_0$ then 
\begin{enumerate}[(i)]
\item the boundary of $\Omega\cap B(x_0,\rho) $, in the sense of~\cite[Definition 3.2]{AL2002}, is {\em of Lipschitz class with constants $c\rho$, $L$},
with $c$ and $L$ depending on $r$, only;
\item $\Omega\cap B(x_0,\rho)$ satisfies the {\em scale-invariant fatness condition}, in the sense of~\cite[equation (2.3)]{AL2002}.
\end{enumerate}
Thus, by~\cite[Proposition 3.2]{AL2002}, for every $0<\rho<\overline\rho_0$
the following Poincar\'e inequality
\[
	\int_{B_\rho(x_0)\cap\Omega} \left\vert \zeta(x)-\intmed_{B_\rho(x_0)\cap\Omega} \zeta(y)\,dy\right\vert^2\!dx
	\le C\rho^2 \int_{B_\rho(x_0)\cap\Omega}|\nabla \zeta|^2\,dx\,,
\]
holds for all $x_0\in \Omega$, for a constant $C$ depending only on $r$. Hence,
\begin{equation}
\label{REGh5'}
\lbrack\zeta\rbrack_{\mathcal{L}^{2,\lambda+2}(\Omega\mathbin{;}\mathbb R^3)}\le C\, \lbrack\nabla\zeta\rbrack_{L^{2,\lambda}(\Omega\mathbin{;}\mathbb R^{3\times3})}\,.
\end{equation}
By
\eqref{REGh1'}, 
\eqref{REGh4'}, \eqref{REGh5'}, there exists a constant $C>0$ depending on
$\Lambda$ and $r$ such that
\begin{equation}
\label{REGh6}
		\|\zeta\|_{\mathcal{L}^{2,\lambda+2}(\Omega\mathbin{;}\mathbb R^3)}\le C \Big[
\|\mathbf H\|_{\Ld} +\|\mathbf E\|_{L^{2,\lambda}(\Omega\mathbin{;}\mathbb R^3)} + \|\Je \|_{L^{2,\lambda}(\Omega\mathbin{;}\mathbb R^3)}
		 \Big]\,.
\end{equation}

We recall that Campanato's space $\mathcal{L}^{2,\lambda+2}(\Omega)$, as a Banach space, is isomorphic to $C^{0,\alpha}(\overline\Omega)$, where 
$\alpha\in(0,\frac{1}{2})$ is given by $\alpha=(\lambda-1)/2$. Incidentally, we set $\alpha_0=(\bar\lambda-1)/2$, we observe that $\alpha\in(0,\alpha_0)$
and $\alpha_0\in(0,\frac{1}{2}]$,
because $\bar\lambda\in(1,2]$. 
Then, \eqref{REGh6} implies
\begin{equation}
\label{REGh7'}
			\|\zeta\|_{\mathcal{C}^{0,\alpha}(\overline\Omega\mathbin{;}\mathbb R^3)}\le C \Big[\|	\mathbf H\|_{\Ld}+
			\|\mathbf E\|_{L^{2,\lambda}(\Omega\mathbin{;}\mathbb R^3)} + \|\Je \|_{L^{2,\lambda}(\Omega\mathbin{;}\mathbb R^3)}
			\Big]\,.
\end{equation}

We take $w\in H^1_0(\Omega)$ and we test equation \eqref{weak-faraday} with $\psi=\nabla w$. By Fubini's Theorem and integrations by parts, we get
\[
 \int_\Omega\mu \mathbf H\cdot \nabla w\,dx - \int_\Omega\mu \mathbf H_0\cdot\nabla w\,dx
= \int_\Omega\int_0^t\Jm\cdot \nabla w\,ds\,dx\,.
\]
Since $\mathbf H =\nabla q+ \zeta$
and $w$ can be any element of $H^1_0(\Omega)$, it follows that $q\in H^1_0(\Omega)$ is a weak solution of the elliptic equation
\[
	\nabla\cdot(\mu\nabla q) = \nabla\cdot  \left( \int_0^t \Jm\,ds + \mu \mathbf H_0-\mu \zeta\right)\,.
\]
Then, classical global Schauder estimates (see, e.g.,~\cite[Theorem~2.19]{T} with $\Gamma=\varnothing$, and Lemma~2.18 therein) 
imply
\begin{equation}
\label{REGh8'}
	\|\nabla q\|_{C^{0,\alpha}(\overline\Omega\mathbin{;}\mathbb R^3)}\le 
C	\Big[ \int_0^t\|\Jm\|_{C^{0,\alpha}(\overline\Omega\mathbin{;}\mathbb R^3)}ds +\|\mu\mathbf H_0\|_{C^{0,\alpha}(\overline\Omega\mathbin{;}\mathbb R^3)}
	+\|\zeta\|_{C^{0,\alpha}(\overline\Omega\mathbin{;}\mathbb R^3)}\Big]\,,
\end{equation}
where the constant depends on $\Lambda$, on $r$.

Since $\mathbf H = \nabla q+ \zeta$, from \eqref{REGh7'} and \eqref{REGh8'} we deduce that
the estimate \eqref{holdestH} is valid for all $t$ that belong to the set $\mathcal{I}$ defined at the beginning of the proof. Since
$\mathcal{I}$ has full measure in $(0,T)$, clearly it follows that \eqref{holdestH} holds for a.e.\ $t\in(0,T)$.
\qed

\appendix\label{app}

\section{Helmoltz decompositions}

\subsection{Proof of Lemma~\ref{HelmE}}
We define
\[
	V = \left\{ u\in H^1(\Omega) \mathbin{\colon} \intmed_{\Omega} u \,dx =0\right\} 
\]
and we observe that $V$ is a closed subspace of the Hilbert space $H^1(\Omega)$.
By Poincar\'e's inequality and Lax-Milgram Lemma,
there exists a (unique) solution $u\in V$ to the variational problem
\begin{equation}
\label{HelmEvp0}
 \int_\Omega \nabla u\cdot\nabla v\,dx =\int_\Omega \mathbf F\cdot \nabla v\,dx\,,\quad \text{for all } v\in V\,.
\end{equation}
Since every $v\in H^1(\Omega)$ differs from some element of $V$ by a constant, from \eqref{HelmEvp0} we can infer 
\begin{equation}
\label{HelmEvp}
 \int_\Omega \nabla u\cdot\nabla v\,dx =\int_\Omega \mathbf F\cdot \nabla v\,dx\,,\quad \text{for all } v\in H^1(\Omega)\,.
\end{equation}
Setting $\eta = \mathbf F-\nabla u$, we have \eqref{HelmE1}  trivially, and \eqref{HelmEvp} implies \eqref{HelmE2}.
To conclude the proof, we test \eqref{HelmEvp} with $v=u$ and get
\begin{equation}
\label{HelmEvp1}
	\int_\Omega |\nabla u|^2\,dx = \int_\Omega \mathbf F\cdot \nabla u\,dx\,.
\end{equation}
Therefore, Cauchy-Schwartz inequality implies $\|\nabla u\|_{\Ld}\le \|\mathbf F\|_{\Ld} $. Then, we note that
\[
	\int_\Omega |\eta|^2 \,dx = \int_\Omega |\mathbf F|^2\,dx +\int_\Omega |\nabla u|^2\,dx - 2\int_\Omega \mathbf F\cdot \nabla u\,dx\,.
\]
Hence, recalling \eqref{HelmEvp1}, we have
$\|\eta\|_{\Ld}^2 \le \|\mathbf F\|_{\Ld}^2-\|\nabla u\|_{\Ld}^2\le \|\mathbf F\|_{\Ld}^2$ 
and we deduce \eqref{HelmE2.5}.

Now, we also assume that $\mathbf F\in \Hu$. Since $\nabla\times\eta = \nabla\times(\mathbf F-\nabla u)=\nabla\times \mathbf F$,
the (distributional) curl of $\eta$ belongs to
$\Ld$. Since \eqref{HelmE2} holds, in particular, for all $v\in H^1_0(\Omega)$, the (distributional) divergence $\nabla \cdot\eta$ of $\eta$ equals $0$. Moreover, again by \eqref{HelmE2},
 for every $v\in H^1(\Omega)$ 
\[
	\langle \gamma_{\partial\Omega}(v)\virg \eta\cdot n\rangle = \int_\Omega \nabla v\cdot\eta\,dx\,,
\]
where $\gamma $ is the trace operator from $H^1(\Omega)$ to $H^{\frac{1}{2}}(\partial\Omega)$ and $\langle\cdot\virg\cdot\rangle$ is the duality pairing between
$H^{-\frac{1}{2}}(\partial\Omega)$ and $ H^{\frac{1}{2}}(\partial\Omega)$. Hence $\eta\cdot n=0$ in $H^{-\frac{1}{2}}(\partial\Omega)$. Then, by an integration by parts,
we deduce that
$\eta\in H^1(\Omega\mathbin{;}\mathbb R^3)$ and 
$\|\nabla \eta \|_{\Ld} = \| \nabla \times \eta\|_{\Ld}$. 
Since $\nabla \times \eta = \nabla\times\mathbf F$, we conclude that $\|\nabla \eta \|_{\Ld} =\|\nabla\times\mathbf F\|_{\Ld}$ as desired.

\subsection{Proof of Lemma~\ref{lm:hh}}

Equation \eqref{hhnew0} with $ v=q$ reads as
\begin{equation}
\label{hhnew1}
\int_\Omega\mu |\nabla q|^2\,dx = \int_\Omega\mu\,\mathbf F\cdot\nabla q\,dx\,.
\end{equation}
Using Cauchy-Schwartz inequality and \eqref{1.2i}, from \eqref{hhnew1} we obtain $\|\nabla q\|_{\Ld}\le\Lambda \|\nabla \mathbf F\|_{\Ld}$, which gives the first inequality in \eqref{hh3};
setting $\zeta = \mathbf F-\nabla q$ and using \eqref{hhnew1} again we also get
\[
	\int_\Omega\mu|\zeta|^2\,dx {=} \!\!\int_\Omega\mu|\mathbf F|^2\,dx+\int_\Omega\mu |\nabla q|^2\,dx-2\int_\Omega\mu\,\mathbf F\cdot\nabla q\,dx  
	{=}\!\!\int_\Omega\mu|\mathbf F|^2\,dx-\int_\Omega\mu|\nabla q|^2\,dx\le \int_\Omega\mu|\mathbf F|^2\,dx\,,
\]
which gives the second inequality, too. Since $\zeta = \mathbf F-\nabla q$,
clearly \eqref{hh2} holds, $\zeta\in L^2(\Omega\mathbin{;}\mathbb R^3)$, and by \eqref{hhnew0} we also have $\zeta\in X_\mu$. 

If, in addition, $\mathbf F\in H^1(\Omega\mathbin{;}\mathbb R^3)$, then $\nabla\cdot\mathbf F\in \Ld(\Omega\mathbin{;}\mathbb R^3)$. Hence,
 by \eqref{hhnew0} and Elliptic Regularity we have $q\in H^2(\Omega)$ (see, e.g.,~\cite[\S 8.3]{GT}). By difference, $\zeta \in H^1(\Omega\mathbin{;}\mathbb R^3)$. Moreover, 
\begin{equation}
\label{hhnew3}
\begin{split}
	\int_\Omega \varphi\cdot\nabla\times\zeta\,dx-\int_\Omega \zeta\cdot\nabla\times\varphi\,dx & =
			\int_\Omega \varphi\cdot\nabla\times(\mathbf F-\nabla q)\,dx-\int_\Omega (\mathbf F-\nabla q)\cdot\nabla\times\varphi\,dx\\
			& = \int_\Omega \varphi\cdot\nabla\times\mathbf F\,dx-\int_\Omega \mathbf F\cdot\nabla\times\varphi\,dx
			+\int_\Omega \nabla q\cdot\nabla\times\varphi\,dx\,,
\end{split}
\end{equation}
for all given $\varphi\in C^1(\overline\Omega\mathbin{;}\mathbb R^3)$.
Now we also assume that
 $\mathbf F\times n=0$ in $H^{-\frac{1}{2}}(\partial\Omega\mathbin{;}\mathbb R^3)$. Then, by \eqref{2.2},
\begin{equation*}
\int_\Omega \varphi\cdot\nabla\times\mathbf F\,dx-\int_\Omega \mathbf F\cdot\nabla\times\varphi\,dx=0\,.
\end{equation*}
Since $q\in H^1_0(\Omega)$, by divergence theorem we also have
\begin{equation*}
	\int_\Omega \nabla q\cdot\nabla\times\varphi\,dx=0\,.
\end{equation*}
Inserting the last two identities in \eqref{hhnew3} we obtain
\[
	\int_\Omega \varphi\cdot\nabla\times\zeta\,dx-\int_\Omega \zeta\cdot\nabla\times\varphi\,dx=0\,.
\]
Since $\varphi$ was arbitrary, by \eqref{2.2} we deduce that $\zeta\times n=0$ in $H^{-\frac{1}{2}}(\partial\Omega\mathbin{;}\mathbb R^3)$.
Thus, $\zeta\in \Huz$. Recalling that $\zeta\in X_\mu$
and that by definition $Y_\mu = \Huz\cap X_\mu$, this concludes the proof.


\begin{thebibliography}{99}
\bibitem{A}
{\sc G. S. Alberti},
H\"older regularity for Maxwell's equations under minimal assumptions on the coefficients,
{\em Calc. Var. and Part. Diff. Equations} {\bf 57} (2018), pp.~1--11.
\bibitem{AL2002}
{\sc G. Alessandrini, A. Morassi, and E. Rosset},
Detecting an inclusion in an elastic body by boundary measurements,
{\em SIAM J. Math. Anal.} {\bf 33}  no. 6 (2002), pp.~1247--1268.
\bibitem{AV}
{\sc A. Alonso Rodr\'iguez, A. Valli},
{\em Eddy Current Approximation of Maxwell Equations},
Springer-Verlag Italia, Milan, 2010.
\bibitem{ACV}
{\sc A. Alonso Rodr\'iguez, J. Cama\~no, A. Valli},
 Inverse source problems for eddy current equations, {\em Inverse Problems}
{\bf 28} (2012), no. 1, 015006, 15 pp.
\bibitem{ABMMZ}
{\sc  R. Alvarado, D. Brigham, V. Maz'ya, M.Mitrea, E.Ziad\'e},
On the regularity of domains satisfying a uniform hour-glass condition and a sharp version of the Hopf-Oleinik boundary point principle,
{\em Journ. of Math. Sci.} {\bf 176}, No. 3 (2011), pp.~281--360.
\bibitem{ABN}
{\sc H. Ammari, A. Buffa, J.C. N\'ed\'elec}, 
 A justification of eddy currents model for the Maxwell equations,
{\em SIAM Journ. on Appl. Math.}, {\bf 60} No. 5 (2000), pp.~1805--1823.
\bibitem{ACCGV}
{\sc H. Ammari, J. Junqing Chen, Z. Chen, J. Garnier, and D. Volkov}
Target detection and characterization from electromagnetic induction data,
{\em J. Math. Pures Appl.} {\bf 101} (2014) 54--75.
\bibitem{AH}
{\sc L. Arnold, B. Harrach},
A unified variational formulation for the parabolic-elliptic eddy current equations,
{\em SIAM J. Appl. Math.} {\bf 72}  2 (2012), pp.~558--576.
\bibitem{C}
{\sc S. Campanato},
Equazioni ellittiche del $II^o$ ordine e spazi $\mathcal{L}^{2,\lambda}$,
{\em Annal. Mat. Pura e Appl.} {\bf 69} No. 1 (1965), pp.~321--381.
\bibitem{Co0}
{\sc M. Costabel},
 A coercive bilinear form for Maxwell's equations, 
 {\em J. Math. Anal. Appl.}, {\bf 157} (1991), 527--541.
\bibitem{Co}
{\sc M. Costabel, M. Dauge, and S. Nicaise},
 Singularities of eddy current problems,
{\em M2AN Math. Model. Numer. Anal. } {\bf 37} (2003), pp.~807--831.
\bibitem{DL}
{\sc G. Duvaut, J.L. Lions},
{\em Inequalities in Mathematical Physics},
Springer-Verlag Berlin, Heidelberg, 1976.
 \bibitem{E}
{\sc L.C.Evans},
{\em Partial Differential Equations},  American Mathematical Society, Providence, R.I., 2010.
\bibitem{F} 
{\sc K. O. Friedrichs},
Differential Forms on Riemannian Manifolds,
{\em Comm. Pure and Appl. Math.} {\bf 8} (1955), pp.~551--590.
\bibitem{GT}
{\sc D. Gilbarg and N.S. Trudinger},
{\em Elliptic Partial Differential Equations of SecondOrder},
Springer-Verlag Berlin, Heidelberg, 1977.
\bibitem{H}
{\sc R. Hiptmair},
 Symmetric coupling for eddy current problems, 
 {\em SIAM J. Numer. Anal.} {\bf 40} (2002) 41--65.
\bibitem{palangio}
{\sc P. Palangio, C. Di Lorenzo,  F. Masci,  and M. Di Persio},
 The study of the electromagnetic anomalies linked with the Earth's crustal activity in the frequency band \texttt{[0.001 Hz-100 kHz]},
{\em Nat. Hazards Earth Syst. Sci.} {\bf 7} (2007), pp.~507--511.
\bibitem{T}
{\sc G.M. Troianiello},
{\em Elliptic Differential Equations and Obstacle Problems},
Plenum Press, New York, 1987.
\bibitem{yamazaki}
{\sc K. Yamazaki}, 
An analytical expression for early electromagnetic signals generated by impulsive line-currents in conductive Earth crust, with numerical examples,
{\em Annals of Geophysics [S.l.]} {\bf 59} No. 2 (2016), pG0212~1--9.
\end{thebibliography}
\end{document}